\documentclass[11pt, draft]{amsart}
\usepackage{amssymb, amstext, amscd, amsmath, amssymb}
\usepackage{mathtools, xypic, paralist, color, dsfont, rotating}
\usepackage{verbatim}
\usepackage{enumerate,enumitem}
\usepackage{float}
\usepackage{setspace}
\usepackage{pifont}



\usepackage{tikz}
\usetikzlibrary{fit,positioning,arrows,automata,calc}
\tikzset{
  main/.style={circle, minimum size = 30pt, thick, draw = black!80, node distance = 10mm},
  connect/.style={-latex, thick},
  box/.style={rectangle, draw = white!100}
}
\usepackage{tikz-cd}

\usepackage{tikz}

\floatstyle{boxed} 
\restylefloat{figure}

\numberwithin{equation}{section}
\footskip=20pt 

\usepackage[a4paper]{geometry}
\geometry{
    tmargin= 3cm, 
    bmargin= 2.5cm, 
    rmargin= 2.5cm, 
    lmargin= 2.5cm 
    }
    

\usepackage{quoting}
\quotingsetup{vskip=.1in}
\quotingsetup{leftmargin=.6in} 
\quotingsetup{rightmargin=.6in}

\let\OLDthebibliography\thebibliography
\renewcommand\thebibliography[1]{
  \OLDthebibliography{#1}
  \setlength{\parskip}{0pt}
  \setlength{\itemsep}{2pt plus 0.5ex}
}

%
\makeatletter
\def\@cite#1#2{{\m@th\upshape\bfseries%
[{#1\if@tempswa{\m@th\upshape\mdseries, #2}\fi}]}}
\makeatother
%
\theoremstyle{plain}
\newtheorem{theorem}{Theorem}[section]
\newtheorem{corollary}[theorem]{Corollary}
\newtheorem{proposition}[theorem]{Proposition}
\newtheorem{lemma}[theorem]{Lemma}
\theoremstyle{definition}
\newtheorem{definition}[theorem]{Definition}
\newtheorem{example}[theorem]{Example}

\newtheorem{remark}[theorem]{Remark}

\newtheorem*{open}{Open Access Statement}
\theoremstyle{remark}


%

\mathtoolsset{centercolon}
%
  \newcommand{\A}{{\mathcal{A}}}
  \newcommand{\B}{{\mathcal{B}}}
  \newcommand{\C}{{\mathcal{C}}}
  \newcommand{\D}{{\mathcal{D}}}
  
  \newcommand{\F}{{\mathcal{F}}}
  \newcommand{\G}{{\mathcal{G}}}

  \newcommand{\K}{{\mathcal{K}}}

\renewcommand{\S}{{\mathcal{S}}}
  \newcommand{\T}{{\mathcal{T}}}

\newcommand{\eps}{\varepsilon}
\def\al{\alpha}
\def\be{\beta}

\def\de{\delta}

\def\io{\iota}
\def\ka{\kappa}
\def\la{\lambda}
\def\La{\Lambda}
\def\om{\omega}

\def\si{\sigma}

\newcommand\vthe{\vartheta}
\newcommand\vphi{\varphi}



\newcommand{\bC}{\mathbb{C}}

\newcommand{\bF}{\mathbb{F}}

\newcommand{\bN}{\mathbb{N}}
\newcommand{\bT}{\mathbb{T}}
\newcommand{\bZ}{\mathbb{Z}}
\newcommand{\bR}{\mathbb{R}}

\newcommand{\fB}{{\mathfrak{B}}}
\newcommand{\fC}{{\mathfrak{C}}}

\newcommand{\fK}{{\mathfrak{K}}}



\newcommand{\foral}{\text{ for all }}
\newcommand{\qand}{\quad\text{and}\quad}

\newcommand{\qfor}{\quad\text{for}\quad}


\newcommand{\ca}{\mathrm{C}^*}

\newcommand{\cenv}{\mathrm{C}^*_{\textup{env}}}

\newcommand{\ol}{\overline}
\newcommand{\wt}{\widetilde}
\newcommand{\wh}{\widehat}


\newcommand{\ad}{\operatorname{ad}}

\newcommand{\diag}{\operatorname{diag}}

\newcommand{\dist}{\operatorname{dist}}

\newcommand{\mt}{\emptyset}

\newcommand{\rank}{\operatorname{rank}}

\newcommand{\spn}{\operatorname{span}}

\newcommand{\sca}[1]{\left\langle#1\right\rangle} 
\newcommand{\nor}[1]{\left\Vert #1\right\Vert} 

\newcommand{\tes}[7]{
	\xymatrix@C=2cm@R=1.5cm{
		K_0\left(#1\right) \ar[r]^{#2} & K_0\left(#3\right) \ar[r]^{#4} & K_0\left(#5\right) \ar[d] \\
		K_1\left(#5\right) \ar[u] & K_1\left(#3\right) \ar[l]^{#7} & K_1\left(#1\right) \ar[l]^{#6}
	}
}

\addtocontents{toc}{\protect\setcounter{tocdepth}{1}}

\begin{document}

\title[Selfadjoint operator spaces]{On the embeddings of selfadjoint operator spaces}

\author[A. Chatzinikolaou]{Alexandros Chatzinikolaou}
\address{School of Electrical and Computer Engineering\\
National Technical University of Athens\\ Athens\\ 157 80\\ Greece}
\email{achatzinik@mail.ntua.gr}

\author[E.T.A. Kakariadis]{Evgenios T.A. Kakariadis}
\address{Department of Mathematics\\ National and Kapodistrian University of Athens\\ Athens\\ 1578 84\\ Greece}
\email{evkakariadis@math.uoa.gr}

\author[S.-J. Kim]{Se-Jin Kim}
\address{Department of Mathematics \\ KU Leuven\\ Celestijnenlaan 200B\\ Leuven \\ 3001\\ Belgium}
\email{sam.kim@kuleuven.be}

\author[I.A. Paraskevas]{Ioannis Apollon Paraskevas}
\address{Department of Mathematics\\ National and Kapodistrian University of Athens\\ Athens\\ 1578 84\\ Greece}
\email{ioparask@math.uoa.gr}

\thanks{2010 {\it  Mathematics Subject Classification.} 46L07, 46B40, 46L52, 47L05, 47L07}

\thanks{{\it Key words and phrases:} Selfadjoint operator spaces, unitisations, complete positivity, embeddings.}

\begin{abstract}
We investigate when a map on a selfadjoint operator space $E$ is an embedding, i.e., when its unitisation in the sense of Werner is completely isometric.
Combining with results of Russell, of Ng, and of Dessi, the second and the last author, it is shown that this is equivalent to: (a) extending bounded positive functionals on each matrix level with the same norm; (b) extending quasistates to quasistates in each matrix level; (c) extending completely bounded completely positive maps with the same cb-norm; and (d) the map being a gauge maximal isometry in the sense of Russell.  If $E$ is approximately positively generated and $\mathrm{C}^*(E)$ is unital, or if $E_{sa}$ is singly generated, then completely positive maps on $E \subseteq \mathcal{B}(H)$ have completely positive extensions on $\mathrm{C}^*(E)$, but possibly not with the same cb-norm; and this is not enough for the inclusion $E \subseteq \mathrm{C}^*(E)$ to be an embedding. 
We show that the inclusion $E \subseteq \mathrm{C}^*(E)$ is always an embedding when $E$ is completely approximately 1-generated, and we fully resolve the case when $E_{sa}$ is singly generated. Combining with the works of Salomon, Humeniuk--Kennedy--Manor, and previous work of the third author, we show that if the inclusion $E \subseteq \mathrm{C}^*(E)$ is an embedding, then rigidity at zero, in the sense of Salomon, coincides with $E$ being approximately positively generated. 
Consequently, we show that $E$ is approximately positively generated if and only if $M_n(E)$ is approximately positively generated for all $n\in \mathbb{N}$, thus extending a previous result of Humeniuk--Kennedy--Manor to the approximation setting. 
As an application we show that hyperrigidity of $E$ in $\mathrm{C}^*(E)$ allows to identify $\mathrm{C}^*(E)$ as the C*-envelope of $E$ in several (non-unital) contexts.
\end{abstract}

\maketitle

\section{Introduction}

\subsection{Framework}

Operator systems, i.e., closed, selfadjoint subspaces of $\B(H)$ containing its unit, have a central role in the theory of Operator Algebras.
Lately, the community has been actively considering selfadjoint operator subspaces, but which need not be unital \cite{CS21, CS22, DKP25, EKT25, HKM23, JN25, KKM23, Kim24, Ng22, Rus23}. 
Those appear naturally, for example when comparing operator systems up to their stabilisations as in the work of Connes and van Suijlekom \cite{CS21}.
The absence of a unit disrupts the link between the norm and the matrix order structure, and thus many of the standard results from the unital setting no longer apply. 
This creates significant difficulties in obtaining even fundamental theorems like the celebrated Arveson's Extension Theorem. 

Matrix ordered operator spaces arise naturally in the study of duality. 
While the dual of a finite dimensional operator system is again an operator system, in the infinite dimensional case the dual space generally fails to be unital, and thus naturally falls into the broader category of matrix ordered operator spaces. 
Connes and van Suijlekom \cite{CS21,CS22} link aspects of noncommutative geometry to this framework through the study of spectral truncations and tolerance relations. 
Selfadjoint operator spaces appear naturally in the construction of symmetrisations by Eleftherakis, the second author and Todorov \cite{EKT25} with applications to Morita theory. 

In his seminal work, Werner \cite[Lemma 4.8 and Theorem 4.15]{Wer02} characterises precisely the compatibility between the norm and the order structure in a matrix ordered operator space so that it is concretely realised in some $\B(H)$ by a completely isometric complete order embedding. 
A \emph{selfadjoint operator space} is exactly a matrix ordered operator space that embeds in this way, so that both its matrix norm structure and its matrix cone structure are inherited from $\B(H)$. 
Towards this end, Werner \cite{Wer02} introduces the notion of the unitisation $E^\#$ of a matrix ordered operator space $E$, with the property that the (unique) unital extension of a completely contractive completely positive map from $E$ to $E^\#$ is completely positive.
Werner's unitisation $E^\#$ is an operator system, and the matrix ordered operator space $E$ is a (concrete) selfadjoint operator space if and only if the canonical inclusion $E \subseteq E^\#$ is completely isometric.

Positivity in Werner's unitisation is extrinsic, in the sense that it is described in terms of the states on every matrix level. 
On the other hand van Dobben de Bruyn in \cite[Theorem 8.3]{vD25} provides an intrinsic characterisation of positivity in relation to the distance from the positive cones. 
The distance of a selfadjoint element from the positive cone plays also a central role in the theory of maximal gauges of Russell \cite{Rus23}. 

\subsection{The embedding problem}

Although any completely isometric complete order embedding has a unique completely positive extension on the unitisation, this extension is not always completely isometric.
The maps that do have this property are called \emph{embeddings}.
Russell \cite[Example 4.1]{Rus23}, and Kennedy, the third author and Manor \cite[Example 2.14]{KKM23} show that this fails even for (finite dimensional) singly generated spaces (in commutative C*-algebras).
Moreover, Connes and van Suijlekom \cite[Remark 2.27]{CS21} show that completely isometric complete order maps are not the suitable morphisms to obtain a meaningful extremal theory in this category, and one needs to restrict to the class of embeddings.
Let us recall here this example to illustrate the subtlety of the question.

\begin{example} \label{E:intro} \cite[Remark 2.27]{CS21}, \cite[Example 2.14]{KKM23},  \cite[Example 4.1]{Rus23}.
For $r\in(0,1]$, consider
\[
x_r := \begin{bmatrix} 1 & 0 \\ 0 & -r \end{bmatrix}.
\]
Set $E_r:=\spn\{x_r\} \subseteq \bC^2 \subseteq M_2(\bC)$, which is a selfadjoint operator space with trivial cones.
Because of this triviality, the map
\[
\vphi \colon E_r \to \bC; \vphi(x_r) = -1,
\]
is a (completely) contractive (completely) positive functional.
When $r < 1$ this map fails to extend to a (completely) contractive (completely) positive functional on $\bC^2$, and thus to $M_2(\bC)$ as $-1 \notin \ol{{\rm conv}}\{ \si_{M_2(\bC)}(x_r)\}$.
Moreover, the map
\[
\vthe \colon E_r \to M_2(\bC); \vthe(x_r) = -x_r,
\]
is a completely isometric complete order embedding.
When $r < 1$ this map fails to extend to a complete isometry on the unitisations, as in this case the elements $\pm x_r$ would generate an element in the C*-envelope of $E_r$ with an empty spectrum.

One may suspect that it is the triviality of the cones that enforces this
pathology.
However this is not true as, when $r = 1$, the inclusion $E_1 \subseteq
M_2(\bC)$ is an embedding.
\end{example}

This example exhibits that embeddings should reflect both the extrinsic and the intrinsic aspects of positivity inherited from the C*-cover, e.g., extensions of maps, spectrum, positive decomposition and distance from cones.

In terms of extensions, Werner \cite{Wer04} shows that a completely isometric complete order embedding $\vthe \colon E \to F$ is an embedding if and only if any completely contractive completely positive map $\phi \colon M_n(E) \to M_k(\bC)$ has a completely contractive completely positive extension $\wt{\phi} \colon M_n(F) \to M_k(\bC)$ with $\wt{\phi} \circ \vthe = \phi$, for all $k, n \in \bN$.
There are two refinements analogous to the operator system theory.
Ng \cite{Ng11} (in a broader context) shows that $\vthe$ is an embedding if and only if any completely contractive completely positive map $\phi \colon E \to M_k(\bC)$ has a completely contractive completely positive extension $\wt{\phi} \colon F \to M_k(\bC)$ with $\wt{\phi} \circ \vthe = \phi$, for all $k \in \bN$.
Dessi with the second and last authors \cite{DKP25} show that $\vthe$ is an embedding if and only if any contractive positive functional on $\vphi \colon M_n(E) \to \bC$ has a contractive positive extension $\wt{\vphi} \colon M_n(F) \to \bC$ with $\wt{\vphi} \circ \vthe = \vphi$, for all $n \in \bN$.

In terms of intrinsic properties, the association with distance formulas is depicted by Russell \cite{Rus23}.
Therein, a completely isometric complete order embedding $\vthe \colon E\to F$ is called a \textit{gauge maximal isometry} if
\[
\dist(x,M_n(E)_+) =  \dist(\vthe^{(n)}(x),M_n(F)_+)
\foral
x\in M_n(E)_{sa}.
\]
The motivation in \cite{Rus23} has been to pursue an abstract characterisation of selfadjoint operator spaces from the given data as in \cite{Wer02}.
As shown in \cite{Rus23} not all completely isometric complete order embeddings are gauge maximal isometries, leaving open the question of the complete characterisation.
Our first main theorem below combines the aforementioned works to the following characterisation of embeddings. 

\medskip

\noindent
{\bf Theorem A.} (\cite{Ng11}, \cite{Rus23}, \cite{DKP25}, Theorem \ref{T:equivemb}, Theorem \ref{T:arvemb})
{\it Let $\vthe \colon E\to F$ be a completely isometric complete order embedding between selfadjoint operator spaces.  
Then the following are equivalent:
\begin{enumerate}
\item The map $\vthe \colon E \to F$ is an embedding.
\item Every map $\vphi \in {\rm CCP}(M_n(E), \bC)$ extends to a map $\wt{\vphi} \in {\rm CCP}(M_n(F), \bC)$ with $\wt{\vphi} \circ \vthe = \vphi$, for every $n\in \bN$.
\item Every map $\phi \in {\rm CCP}( E, M_k(\bC))$ extends to a map $\wt{\phi}\in {\rm CCP}( F, M_k(\bC))$  with $\wt{\phi} \circ \vthe = \phi$, for every $k\in \bN$.
\item Every map $\phi\in {\rm CCP}( E, \B(K))$ extends to a map $\wt{\phi}\in {\rm CCP}(F , \B(K))$ with $\wt{\phi} \circ \vthe = \phi$, for every Hilbert space $K$.
\item The map $\vthe \colon E \to F$ is a gauge maximal isometry.
\end{enumerate}
}

\medskip

Equivalence [(i) $\Leftrightarrow$ (ii)] is shown in \cite[Corollary 2.3]{DKP25}.
Equivalence [(i) $\Leftrightarrow$ (iii)] is shown in a broader context in \cite[Lemma 4.2 (a)]{Ng11}.
Equivalence [(iv) $\Leftrightarrow$ (v)] is shown in \cite[Corollary 4.2]{Rus23}.
Equivalence [(iii) $\Leftrightarrow$ (iv)] follows by a BW-compactness argument and is provided in Theorem \ref{T:arvemb}.
Furthermore, equivalence [(i) $\Leftrightarrow$ (v)] is directly shown in Theorem \ref{T:equivemb} with a connection of Russell's gauge maximality with the description of the positive cones of the unitisation by van Dobben de Bruyn \cite{vD25}. 
A key step in this approach is the distance formula
\[
\dist(x, M_n(E)_{+}) = \sup_{\vphi \in {\rm CCP}(M_n(E), \bC)} -\vphi(x)
\foral 
x \in M_n(E)_{sa}, n \in \bN,
\]
that we show in Theorem \ref{T:stateformulaE}.
In passing, in Corollary \ref{C:spec} we show that an inclusion $E \subseteq \ca(E)$ is an embedding if and only if
\[
\dist(x, M_n(E)_+) = \max_{\la \in \si_{M_n(\C)}(x)} -\la
\foral
x \in M_n(E)_{sa}, n \in \bN.
\]
Item (iv) above is a version of Arveson's Extension Theorem, an important tool for any further developments in the theory.

\subsection{The automatic embedding property}

We next study when an inclusion $E \subseteq \B(H)$ (or equivalently, $E \subseteq \ca(E)$) is an embedding. 
As test cases we have that $E \subseteq \ca(E)$ is an embedding when $E$ contains an approximate unit in a C*-subalgebra $\A \subseteq E$ as in \cite{DKP25}, or when $E$ contains a matrix norm-defining approximate order unit for $\ca(E)$ as in \cite{CS22} (it is not known if the first class is contained in the second).
We focus on two further classes that have tractable cone structure: when $E$ is \emph{approximately positively generated}, i.e., $E_{sa} = \ol{E_{+} - E_{+}}$, or when $E_{sa}$ is singly generated. 
These cases are of particular interest as completely positive maps may attain completely positive extensions.

\medskip

\noindent 
{\bf Theorem B.} (Corollary \ref{C:exten}, Proposition \ref{P:1dposext}) 
{\it Let $E \subseteq \ca(E)$ be a selfadjoint operator space.
Then the following hold:
\begin{enumerate}
\item If $E$ is approximately positively generated and $\ca(E)$ is unital, then every completely bounded completely positive map $\phi\colon E \to \B(K)$ has an extension to a (completely bounded) completely positive map $\wt{\phi} \colon\ca(E) \to \B(K)$.
\item If $E = \spn\{x\}$ for some $x \in E_{sa}$, then every (completely bounded)  completely positive map $\phi\colon E \to \B(K)$ has an extension to a (completely bounded) completely positive map $\wt{\phi} \colon\ca(E) \to \B(K)$.
\end{enumerate}
}

\medskip

Approximately positively generated spaces $E$ are in abundance and this extra structure demonstrates strong connections with the ambient C*-algebra.
In Proposition \ref{P:complappos} we combine the notion of rigidity at zero of Salomon \cite{Sal17} with the works of Humeniuk--Kennedy--Manor \cite{HKM23} and of the third author \cite{Kim24} to show that $E$ is approximately positively generated if and only if $M_n(E)$ is approximately  positively generated for all $n\in \bN$.
This is an extension of \cite[Proposition 8.4]{HKM23} to the approximation setting.

The proof of item (i) in Theorem B relies on Lemma \ref{L:extendable_closed} which may be of independent interest.
Therein it is shown that the set of extendable completely bounded and completely positive maps is closed in the point-weak* topology.
However this extra feature is not enough for inclusions to be embeddings.
In Example \ref{E:nonemb} we demonstrate that a positive extension of a positive functional may fail to preserve the norm, motivating the consideration of approximate positive generation together with norm control over the positive decompositions of selfadjoint elements.

Such decompositions have been extensively studied in \cite{HKM23, JN25, KV97, Ng22}.
A selfadjoint operator space $E$ is said to be \emph{completely $\ka$-generated} if there exists $\ka \geq 1$ such that
\[
(M_n(E)_{sa})_1 \subseteq (M_n(E)_+)_{\ka} - (M_n(E)_+)_{\ka},
\]
where $(M_n(E)_{sa})_1$ denotes the unit ball of selfadjoint elements in $M_n(E)$, and $(M_n(E)_+)_{\ka}$ denotes the ball of radius $\ka$ in the positive cone. 
Following their terminology, $E$ is said to be \emph{completely approximately $\ka$-generated} if there exists $\ka \geq 1$ such that
\[
(M_n(E)_{sa})_1 \subseteq \ol{(M_n(E)_+)_{\ka} - (M_n(E)_+)_{\ka}}.
\]

Ng proves in \cite[Theorem 3.9]{Ng22} that a selfadjoint operator space $E$ is completely $\ka$-generated for some $\ka>0$ if and only if it is completely approximately $\ka'$-generated for some $\ka'>0$ if and only if it is \emph{dualisable}, i.e., $E^d$ is a selfadjoint operator space under the canonical dual matrix cone and with a matrix norm equivalent to the cb-norm. 
Humeniuk--Kennedy--Manor show in \cite[Theorem 5.8]{HKM23} that dualisability is equivalent to complete $\ka''$-normality of the dual space. 
Moreover, in \cite[Theorem 5.8]{HKM23} they provide an intrinsic characterisation of dualisability in terms of the corresponding pointed compact nc-convex set of the selfadjoint operator space, see \cite[Theorem 4.5]{KKM23} of Kennedy, the third author and Manor.
We show that when $E$ is completely approximately 1-generated then it satisfies the automatic embedding property in the sense of Definition \ref{D:aep}.
Thus, apart from a good duality framework, in completely approximately 1-generated selfadjoint operator spaces one obtains Arveson's Extension Theorem. 

\medskip

\noindent
{\bf Theorem C.} (Theorem \ref{T:1apppos})
{\it Let $E$ be a completely approximately 1-generated selfadjoint operator space. 
Then $E$ has the automatic embedding property.}

\medskip

We note that due to the Hahn--Banach Theorem, every positive functional admits a selfadjoint extension with the same norm.
Complete approximate 1-generation imposes that such extensions are automatically positive due to a Jordan decomposition argument.
With Example \ref{E:nonemb} we stress that $E$ may be positively generated, but not completely 1-generated, even if $\ca(E)$ is unital (we further illustrate the independence of approximate positive generation, unitality of the generated C*-algebra, and embeddings in Example \ref{E:tables}).

Let us return now to Example \ref{E:intro} from \cite{CS21, KKM23, Rus23} on singly generated selfadjoint operator spaces, and explain where the problem lies.

\medskip

\noindent
{\bf Theorem D.} (Corollary \ref{C:1dproper}, Theorem \ref{T:1d})
\emph{Let $x\in \B(H)_{sa}$ and let $x_{\pm}\in \B(H)_+$ be the unique positive and negative components of $x$. Set $E := \spn\{x\}$.
Then we have the following cases:\\
\textup{(a)} If $x_{+} = 0$ or $x_{-} = 0$, then the inclusion $\spn\{x\} \subseteq \ca(E)$ is an embedding.\\
\textup{(b)} If $x_{\pm} \neq 0$, then the following are equivalent:
\begin{enumerate}
\item The inclusion of $\spn\{x\}$ inside $\ca(E)$ is an embedding.
\item $\|x_{+}\| = \|x_{-}\|$.
\item The map $\phi \colon \spn\{x\} \to \spn\{x\}$ with $\phi(x) = -x$ has a completely contractive completely positive extension $\wt{\phi} \colon \ca(E) \to \B(H)$.
\item The functional $\vphi \colon \spn\{x\} \to \bC$ with $\vphi(x) = -\|x\|$ has a (completely) contractive (completely) positive extension $\wt{\vphi} \colon \ca(E) \to \bC$.
\end{enumerate}}

\medskip

Therefore the extension of the maps $\phi$ and $\vphi$ are --each on its own-- the only obstruction for the embedding property in Example \ref{E:intro}.
Likewise this shows why the inclusion $E_1 \subseteq \bC^2$ is an embedding.
In this case the map $\vphi$ is the restriction of the compression to the (2,2) entry, while $\phi$ is implemented by the unitary permutation of the basis elements.

Finally we give applications of the embedding property with respect to extremal theory.
In general we have three classes to investigate:
\begin{align*}
\F_1 & := \{\phi \colon E\to \B(K) : \text{$\phi$ completely isometric completely positive} \}, \\
\F_2 & := \{\vthe \colon E\to \B(K) : \text{$\vthe$ completely isometric complete order embedding} \}, \\
\F_3 &:= \{\vthe \colon E \to \B(K) : \text{$\vthe$ embedding} \}.
\end{align*}
We denote by $\partial \F_i$ the terminal object in each category $\F_i$, $i=1,2,3$ if it exists.
Although $\partial \F_3$ always exists by \cite[Theorem 2.25]{CS21} it is not generally true that $\partial \F_1$ or $\partial \F_2$ exist, or that they are the same if they exist.
The example at the beginning of the introduction gives a case where $\partial \F_1$ and $\partial \F_2$ do not exist.
Blecher--Kirkpatrick--Neal--Werner show in \cite[Theorem 2.3]{BKNW07} that $\partial \F_1$ does exist for $E$ being an approximately positively generated selfadjoint operator space.
Under the automatic embedding property, in Corollary \ref{C:F_2} we show that $\partial \F_2 = \partial \F_3$ when $E$ is completely approximately 1-generated. 
Moreover, we obtain the following result regarding hyperrigidity.

\medskip

\noindent 
{\bf Theorem E.} (Proposition \ref{P:embd_plus_hyper}, Corollary \ref{C:emb_apr_hyper}) 
{\it Let $E \subseteq \ca(E)$ be a hyperrigid selfadjoint operator space.
Then $\cenv(E)\cong \ca(E)$ by a canonical $*$-isomorphism if at least one of the following conditions is satisfied:
\begin{enumerate}
\item $E$ is separating for $\ca(E)$ and $E\subseteq \ca(E)$ is an embedding.
\item $\ca(E)$ is unital and $E\subseteq \ca(E)$ is an embedding.
\end{enumerate}

In particular,  $\cenv(E)\cong \ca(E)$ by a canonical $*$-isomorphism when:
\begin{enumerate}
\item[\textup{(a)}] $\A \subseteq E$ is a C*-algebra such that $\A \cdot E \subseteq E$ and there is an approximate unit $(e_\la)_\la \subseteq \A$ such that $\lim_\la e_\la x = x$ for all $x \in E$.
\item[\textup{(b)}] $E$ contains a matrix norm-defining approximate order unit $(e_\la)_\la$ for $E$.
\item[\textup{(c)}] $E$ is completely approximately 1-generated.
\end{enumerate}
}

\medskip

The hyperrigidity property of a generating set in a C*-algebra was introduced by Arveson \cite{Arv11} where it was also shown that it implies minimality of the C*-cover when the generating set is a unital operator system.
Salomon \cite{Sal17} introduced the rigidity at zero which now has become a central element in the theory.
Since \cite{Arv11} there has been a growing interest in researching the property in specific classes (perhaps too many for an inclusive list here).
This line of research has regained interest with the refutation of Arveson's Hyperrigidity Conjecture for unital noncommutative operator algebras by Bilich--Dor-On \cite{BD24}.
In a very recent preprint Clou{\^a}tre \cite{Clo25} identifies the obstruction(s) to that problem, while the commutative case remains open.

\subsection{Contents}

The structure of the manuscript is as follows.
In Section \ref{S:prelim} we gather the preliminaries on selfadjoint operator spaces, their unitisations, and their C*-extensions.

In Section \ref{S:emb} we give the complete characterisation for a completely isometric complete order embedding between selfadjoint operator spaces to be an embedding. 
En route we give another proof of Werner's result \cite{Wer02} that the C*-algebraic unitisation coincides as an operator system with the unitisation as a selfadjoint operator space. 
We show that for a selfadjoint operator space endowed with the maximal gauge, Werner's unitisation is isomorphic to Russell's unitisation.

In Section \ref{S:pos gen} we focus on approximately positively generated spaces.
We show that if the generated C*-algebra is unital, then we can extend completely bounded completely positive maps to completely bounded completely positive maps. 
This is not equivalent to the inclusion being an embedding as exhibited in Example \ref{E:nonemb}.
We show that completely approximately 1-generated selfadjoint operator spaces admit an automatic embedding property.

In Section \ref{S:1d} we focus on singly generated selfadjoint operator spaces.
We show that extensions of completely positive maps always exist, but not always with the same cb-norm.
We give equivalent conditions for an inclusion $\spn\{x\} \subseteq \B(H)$ with $x \in \B(H)_{sa}$ to be an embedding.

In Section \ref{S:ext} we give some applications of the characterisation of embeddings in terms of co-universal objects and hyperrigidity.

\section{Preliminaries} \label{S:prelim}

We present some elements of the theory of selfadjoint operator spaces in the sense of \cite{CS21, CS22, DKP25, KKM23, Rus23, Wer02, Wer04}; see also \cite{BL04, Pau02} for operator systems theory.
A \emph{matrix ordered operator space} $E$ is a norm-closed operator space with a completely isometric involution and a family of norm-closed cones $C_n\subseteq M_n(E)_{sa}$ such that:
\begin{enumerate}
\item for every $n \in \bN$, the cone $C_n$ is proper in $M_n(E)_{sa}$;
\item for every $n, m \in \bN$, we have $a x a^*\in C_m$ for every $a \in M_{m,n}$ and $x\in C_n$.
\end{enumerate}
We will denote the positive cone of $M_n(E)$ by $M_n(E)_+$.
A map $\phi \colon E \to F$ between matrix ordered operator spaces $E$ and $F$ is called \emph{positive} if it is selfadjoint, i.e., if $\phi^*=\phi$ where $\phi^*(x):=\phi(x^*)^*$ for every $x\in E$, and $\phi(E_+)\subseteq F_+$. 
Note that $\phi$ being selfadjoint  does not follow from the positivity condition as the cones do not necessarily span the spaces. 
We say that a map $\phi \colon E \to F$ is \emph{completely positive} if for every $n\in \bN$ the amplification 
\[
\phi^{(n)} \colon M_n(E) \to M_n(F); [x_{ij}] \mapsto [\phi(x_{ij})],
\]
is positive.
For $E$ and $F$ matrix ordered operator spaces we write
\begin{align*}
{\rm CB}(E, F) &:= \{\phi \colon E \to F : \phi \text{ is completely bounded} \},\\
{\rm CCP}(E, F) &:= \{\phi \colon E \to F : \phi \text{ is completely contractive and completely positive}\}.
\end{align*}
A map $\phi \colon E \to F$ is a \emph{complete order embedding} if it is completely positive and invertible onto its image with a completely positive inverse. If $\phi$ is also surjective it is called a \emph{complete order isomorphism}.

A matrix ordered operator space $E$ is called \emph{completely $\ka$-normal} if there exists a $\ka>0$ such that for every $n\in \bN$ and every $x\in M_n(E)_{sa}$ we have 
\[
\|x\|\leq \ka \max\{ \|u\|, \|v\|\} 
\text{ whenever } 
u,v\in M_n(E)_{sa}
\text{ satisfy } u\leq x \leq v.
\] 
In \cite[Remark 4.14]{Wer02} it is stated that a matrix ordered operator space $E$ is completely $\ka$-normal if and only if there exists a Hilbert space $H$ and a complete order embedding $\phi \colon E \to \B(H)$ such that 
\[
\|\phi \colon E \to F\|_{\rm cb} \leq 1
\qand 
\|\phi^{-1} \colon \phi(E) \to F\|_{\rm cb}\leq \ka.
\]
For $\ka=1$ this is shown in \cite[Corollary 4.1]{Rus23}, and for general $\ka>0$ in \cite[Corollary 8.13]{vD25}.
Here we will restrict to completely $1$-normal matrix ordered operator spaces, and thus to the concrete case; that is, by a \emph{selfadjoint operator space} we will mean a selfadjoint norm-closed subspace of some $\B(H)$ with the induced matrix cones and matrix norms via the inclusion.

For a selfadjoint operator space $E\subseteq \B(H)$ we write $\ca(E)$ for the C*-algebra it generates inside $\B(H)$. 
We note that the same proof as in \cite[Proposition 3.8]{Pau02} yields that contractive (positive) functionals of a selfadjoint operator space are automatically completely contractive (resp. completely positive). 
Henceforth, we will not use the term completely when we refer to positivity or contractivity of functionals.

In \cite[Definition 4.7]{Wer02} Werner introduces the following notion of unitisation for a matrix ordered operator space $E$.
Consider the direct sum linear space  $E^{\#} := E \oplus \bC$ with an involution given by $(x, a)^*:=(x^*,\ol{a})$.
Declare a selfadjoint element $(x,a) \in M_n(E^{\#})$ to be \emph{positive} if
\[
a \geq 0 \text{ and } \vphi(a_\eps^{-1/2} x a_\eps^{-1/2}) \geq -1 \foral \eps>0 \text{ and } \vphi \in {\rm CCP}(M_n(E), \bC),
\]
where $a_\eps := a + \eps I_{M_n}$.
We will use the following equivalent definition
\[
(x, a) \in M_{n}(E^{\#})_{+} 
\; \Longleftrightarrow \; 
a \geq 0 
\; \; \textup{ and } 
\sup_{\vphi \in {\rm CCP}(M_n(E), \bC)} -\vphi(a_{\eps}^{-1/2} x a_{\eps}^{-1/2}) \leq 1 \foral \eps>0.
\]

In \cite[Lemma 4.8]{Wer02} Werner shows that $E^{\#}$ is an operator system, and that $E$ is a selfadjoint operator space if and only if the inclusion map 
\[
\io_E \colon E \hookrightarrow E^{\#}; x \mapsto (x,0),
\]
is a completely isometric complete order embedding. 
In what follows we will omit the explicit reference to the map $\io_E$ and identify $E$ with its copy inside $E^\#$ whenever $E$ is a selfadjoint operator space. 
In \cite[Lemma 4.9]{Wer02} it is shown that whenever $E$ and $F$ are matrix ordered operator spaces and $\phi \colon E \to F$ is a completely contractive completely positive map, then the natural unitisation 
\[
\phi^\# \colon E^\#\to F^\#; (x, a) \mapsto (\phi(x), a),
\]
is unital and completely positive.
If in addition $\phi$ is a completely isometric complete order isomorphism, then $\phi^\#$ is a unital complete order isomorphism \cite[Corollary 2.14]{CS21}.

However, $\phi^\#$ is not always a unital complete order embedding if $\phi$ is assumed to be merely a completely isometric complete order embedding.
Following \cite[Section 2]{KKM23}, a map $\vthe \colon E \to F$ is called an \emph{embedding} if it is a completely isometric complete order embedding, and the unitisation map $\vthe^\#$ is a unital complete order embedding of operator systems; equivalently $\vthe^{\#}$ is completely isometric.
In \cite[Corollary 4.9]{Wer02} Werner shows that the unitisation construction recovers the known unitisation when restricted to C*-algebras. 
We will give an alternative proof to this fact in Corollary \ref{C:C*-unit} through our characterisation of embeddings in Theorem \ref{T:equivemb}.

Injective $*$-homomorphisms between C*-algebras are embeddings, as the unitisation of an injective $*$-homomorphism is injective, and thus completely isometric.
Hence for $E \subseteq\B(H)$ the inclusion map is an embedding if and only if the inclusion map $E \subseteq \C$ is an embedding for any C*-algebra $E\subseteq \C \subseteq \B(H)$.
In \cite[Example 2.14]{KKM23} and \cite[Example 4.1]{Rus23}, it is shown that even such inclusions are not automatically embeddings; see also \cite[Example 2.4]{DKP25}.
By seeing $\B(H)$ as the (1,1) corner of $\B(H \oplus \bC)$, then $\ca(E)^\#$ is $*$-isomorphic to $\ca(E) + \bC I_{H \oplus \bC}$.
Hence $E \subseteq \B(H)$ is an embedding if and only if for every $n \in \bN$ and $(x,a) \in M_n(E^\#)$ we have
\[
\|(x,a)\|_{\#}^{(n)} = \|x +a I_{H \oplus \bC}\|,
\]
where $\nor{\cdot}_{\#}^{(n)}$ is the norm induced by the matrix order unit $(0,1)$ in $E^\#$.
Note that \lq\lq$\geq$'' always holds by the construction of $E^\#$.

We have the following proposition about the concrete realisation of the unitisation of a selfadjoint operator space that embeds in the C*-algebra it generates.

\begin{proposition}\label{P:concreal}
Let $E \subseteq \B(H)$  be an embedding of a selfadjoint operator space. Then the following hold:
\begin{enumerate}
\item If $\ca(E)\subseteq \B(H)$ is not unital, or if it is unital and $1_{\ca(E)}\neq I_{H}$, then $ E^{\#} \cong E+\bC I_{H}$.
\item If $ \ca(E) \subseteq \B(H)$ is unital and $1_{\ca(E)}=I_{H} $, then $E^{\#} \cong E + \bC I_{H\oplus \bC}$ with the canonical identification $E \subseteq \B(H) \subseteq \B(H \oplus \bC)$.
\end{enumerate}
\end{proposition}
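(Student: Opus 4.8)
The plan is to run everything through the description of the embedding property recalled just before the statement: since $E\subseteq\B(H)$ is an embedding, for every $n\in\bN$ and every $(x,a)\in M_n(E^\#)$ we have $\nor{(x,a)}_\#^{(n)}=\nor{x+aI_{H\oplus\bC}}$, where $\B(H)$ is viewed as the $(1,1)$-corner of $\B(H\oplus\bC)$. Decomposing $(H\oplus\bC)^n\cong H^n\oplus\bC^n$, the operator $x+aI_{H\oplus\bC}$ is block-diagonal and equals $(x+a\otimes I_H)\oplus a$, whence
\[
\nor{(x,a)}_\#^{(n)}=\max\big\{\,\nor{x+a\otimes I_H}_{\B(H^n)},\ \nor{a}_{M_n(\bC)}\,\big\}\qquad\text{for every }n\in\bN\text{ and }(x,a)\in M_n(E^\#).
\]
Both items are extracted from this identity, using also that a unital completely isometric map between operator systems is automatically a complete order embedding (for a selfadjoint $s$, the relation $\nor{1-s}\le 1$ is equivalent to $0\le s\le 2$ and is preserved in both directions at every matrix level).

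For item (ii): by Werner's identification of the unitisation of a C*-algebra with its C*-algebraic unitisation (recalled above), $\ca(E)^\#$ is $*$-isomorphic to $\ca(E)+\bC I_{H\oplus\bC}$ via $(c,a)\mapsto c+aI_{H\oplus\bC}$. The canonical map $E^\#\to\ca(E)^\#$ — the unitisation of the inclusion $E\hookrightarrow\ca(E)$, which is unital by \cite[Lemma 4.9]{Wer02} — is completely isometric precisely because $E\subseteq\ca(E)$ is an embedding, hence a complete order embedding by the remark above. It therefore identifies $E^\#$ with $\{(x,a):x\in E,\ a\in\bC\}\subseteq\ca(E)^\#$, which under the displayed $*$-isomorphism is precisely $E+\bC I_{H\oplus\bC}$ with the canonical identification $E\subseteq\B(H)\subseteq\B(H\oplus\bC)$.

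For item (i): the hypothesis forces $I_H\notin\ca(E)$ — clear when $\ca(E)$ is non-unital, and if $\ca(E)$ is unital with $1_{\ca(E)}\neq I_H$ then $I_H\in\ca(E)$ would be a second unit for $\ca(E)$, which is impossible. In particular $I_H\notin E$, so the unital $*$-linear map $\Phi\colon E^\#\to\B(H)$, $(x,a)\mapsto x+aI_H$, is injective with range $E+\bC I_H$. By the displayed identity and the remark above, $\Phi$ will be a complete order isomorphism onto $E+\bC I_H$, giving $E^\#\cong E+\bC I_H$, as soon as it is completely isometric, i.e.\ as soon as we establish the norm estimate
\[
\nor{a}_{M_n(\bC)}\le\nor{x+a\otimes I_H}_{\B(H^n)}\qquad\text{for every }n\in\bN,\ x\in M_n(E)\text{ and }a\in M_n(\bC);
\]
this is the crux of the argument. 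For it, note that $\ca(E)+\bC I_H$ is a C*-subalgebra of $\B(H)$ — the sum of the C*-algebra $\ca(E)$ with the one-dimensional, hence closed, central subspace $\bC I_H$ — in which $\ca(E)$ is a closed two-sided ideal; as $I_H\notin\ca(E)$, the quotient $(\ca(E)+\bC I_H)/\ca(E)$ is one-dimensional, hence isometrically $*$-isomorphic to $\bC$ by uniqueness of C*-norms. Passing to $n\times n$ matrices, the quotient $M_n(\ca(E)+\bC I_H)/M_n(\ca(E))$ is isometrically $*$-isomorphic to $M_n(\bC)$, with the coset of $a\otimes I_H$ corresponding to $a$; hence for $x\in M_n(E)\subseteq M_n(\ca(E))$,
\[
\nor{a}_{M_n(\bC)}=\dist\big(a\otimes I_H,\,M_n(\ca(E))\big)\le\dist\big(a\otimes I_H,\,M_n(E)\big)\le\nor{x+a\otimes I_H}.
\]
Combined with the displayed identity this yields $\nor{(x,a)}_\#^{(n)}=\nor{x+a\otimes I_H}=\nor{\Phi^{(n)}(x,a)}$, so $\Phi$ is completely isometric, as needed.

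The only genuine obstacle is this last norm estimate $\nor{a}\le\nor{x+a\otimes I_H}$; everything else is bookkeeping with Werner's unitisation and the embedding characterisation. Its content is that $I_H\notin\ca(E)$ makes $\ca(E)+\bC I_H$ an honest one-dimensional (matricially, $n^2$-dimensional) C*-extension of $\ca(E)$, so the relevant quotient norm is pinned down; by contrast, in the situation of item (ii) there is no such room and one cannot collapse $I_{H\oplus\bC}$ down to $I_H$.
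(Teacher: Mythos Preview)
Your proof is correct and follows essentially the same approach as the paper: both use the embedding assumption to identify $E^\#$ inside $\ca(E)^\#$ and then invoke the concrete realisation of the C*-algebraic unitisation of $\ca(E)$. For item (ii) your argument is identical to the paper's; for item (i) the paper simply quotes the identification $\ca(E)^\#\cong\ca(E)+\bC I_H$ (valid since $I_H\notin\ca(E)$) and restricts, whereas you unpack this by establishing the norm estimate $\nor{a}\le\nor{x+a\otimes I_H}$ directly via the quotient $(\ca(E)+\bC I_H)/\ca(E)\cong\bC$ --- but this is exactly the content of that identification, so the two arguments are the same in substance, yours just being more self-contained.
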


\begin{proof}
Recall that, if $\C \subseteq \B(H)$ is a C*-algebra, then we have the following cases for identifying $\C^\#$.
If $\C$ is not unital, or if $\C$ is unital and $1_{\C} \neq I_H$, then $\C^\# \cong \C + \bC I_H \subseteq \B(H)$ by the map
\[
\Phi \colon \C^\# \to \B(H); (x, a) \mapsto x + a I_H.
\]
If $\C$ is unital and $1_\C = I_H$, then $\C^\# \cong \C \oplus \bC \subseteq \B(H \oplus \bC)$ by the map
\[
\Phi \colon \C^\# \to \B(H \oplus \bC); (x, a) \mapsto (x + a I_H, a).
\]
Now the proof follows since the embedding property allows to write $E^\#\subseteq \ca(E)^\#$.
\end{proof}

\begin{remark}\label{R:covers}
Let $E$ be a selfadjoint operator space. An adaptation of the proof of \cite[Theorem 2.25 (i)]{CS21} yields that if $ \psi \colon E^{\#}\to \B(H) $ is a unital complete order embedding, then $\psi|_{E}$ is an embedding.
\end{remark}

The notion of embeddings is sufficient for an extremal theory.
In \cite[Theorem 2.25]{CS21} Connes and van Suijlekom prove that there exists a (necessarily unique up to $*$-isomorphisms) C*-envelope in the category of selfadjoint operator spaces with respect to embeddings, rather than just completely isometric complete order embeddings; see also \cite[Proposition 3.5]{Wer04} and \cite[Proposition and Definition 5.11]{Ng11}.
Let us fix notation.

A \emph{C*-cover} of a selfadjoint operator space $E$ is a pair $(\C, \vthe)$ where $\C$ is a C*-algebra and $\vthe \colon E \to \C$ is an embedding such that $\ca(\vthe(E)) = \C$.
In \cite[Theorem 2.25]{CS21} it is shown that any such $E$ admits a unique minimal C*-cover $\cenv(E)$, which is called \emph{the C*-envelope of $E$}.
In particular, it is shown that $\cenv(E)$ is the C*-algebra generated by $E$ inside $\cenv(E^{\#})$.
In \cite[Theorem 2.25]{CS21} it is shown that, if $E$ is an operator system, then the C*-envelope of $E$ as a selfadjoint operator space coincides with the C*-envelope of $E$ as an operator system.

On the other hand the notion of just completely isometric complete order embeddings is not (in general) sufficient for an extremal theory, see \cite[Remark 2.27]{CS21}.
Below we apply \cite[Remark 2.27]{CS21} on a specific example that we will be carrying forwards.

\begin{example}\label{E:F_2} \cite[Remark 2.27]{CS21}
For $r\in (0,1] $, consider
\[
x_r := \begin{bmatrix} 1 & 0 \\ 0 & -r \end{bmatrix}.
\]
Set $E_r:=\spn\{x_r\}=\spn\{-x_r\}$. 
In $\ca(E_r) \cong\bC \oplus \bC$, we have
\[
\si_{\ca(E_r)}(x_r) = \{1, -r\}
\qand
\si_{\ca(E_r)}(-x_r) = \{-1, r\}.
\]
By construction, the map
\[
\phi \colon E_r \to E_r; \phi(x_r) = -x_r,
\]
is completely isometric and a complete order isomorphism since the positive cones are trivial. 

Suppose that the category of selfadjoint operator spaces with respect to completely isometric complete order embeddings admits a C*-envelope which we denote by $\C$. 
Then by the defining property of the C*-envelope, there are $*$-epimorphisms 
\[
\pi_1, \pi_2 \colon \ca(E_r)\to \C
\textup{ such that }
\pi_1(x_r) = \pi_2(-x_r).
\]
Hence we get
\[
\si_{\C}(\pi_1(x_r)) = \si_{\C}(\pi_2(x_r)) \subseteq \{1, -r\} \cap \{-1, r\}.
\]
By choosing $r \neq 1$, we arrive to the contradiction of an empty spectrum of an element. 

Note that, for any $r\in(0,1)$ the map 
\[
\psi \colon E_r \to E_1; \psi(x_r) = x_1,
\]
is a completely isometric complete order isomorphism, and thus $E_1$ does not admit a C*-envelope with respect to completely isometric complete order embeddings, as well.
\end{example}

We will also need Arveson's correspondence for selfadjoint operator spaces. 
In the following remark we recall that complete boundedness of the maps is preserved even in the non-unital setting.

\begin{remark}\label{R:Arv_cor}
Arveson's correspondence (see for example \cite[Proposition 13.2]{Pau02}) between positive functionals $s \colon M_n(E) \to \bC$ and completely positive maps $\phi \colon E \to M_n(\bC)$ is given by 
\[
s_\phi \colon M_n(E) \to \bC; s_\phi((x_{ij})) = \sum_{i,j} \sca{\phi(x_{ij})e_j, e_i}
\qand
\phi_s \colon E \to M_n(\bC); \phi_s(x) = (s(x\otimes E_{ij})),
\]
where $(e_i)_{i=1}^n$ is the canonical orthonormal basis of $\bC^n$, and $(E_{ij})_{i,j=1}^n$ is the family of matrix units in $M_n(\bC)$. 
By a well-known result of Smith \cite{Sm83} we have $\|\phi_s^{(n)}\|=\| \phi_s\|_{\rm cb}$ and $\|\phi_s^{(n)}\| \leq n \|\phi_s\|$, (see also  \cite[Exercise 3.10 and Proposition 8.11]{Pau02}). 
Hence
\[
\|\phi_s\|_{\rm cb}\leq n^3 \|s\|
\qand
\|s_\phi\|\leq n \|\phi\|_{\rm cb}.
\]
Note that Smith's result also implies that bounded linear maps taking values in $M_n(\bC)$ are automatically completely bounded.
\end{remark}

To fix notation, for a selfadjoint element $x$ in a C*-algebra $\C$ we write $x = x_{+} - x_-$ for the (unique) decomposition such that $x_{+}, x_-\in \C_{+}$ and $ x_{+} x_-=0$.
The following proposition appears also in \cite[Examples 3.3 and 4.3]{Rus23} and in \cite[proof of Proposition 6.3]{vD25}. 
We provide an alternative proof.

\begin{proposition} \label{P:gaugeC*} \cite[Examples 3.3 and 4.3]{Rus23} \cite[proof of Proposition 6.3]{vD25}
Let $\C \subseteq \B(H)$ be a C*-algebra and $x \in \C_{sa}$. 
Then
\[ 
\dist(x, -\C_{+})=\|x_{+}\|
\qand
\dist(x, \C_{+})=\|x_{-}\|.
\]
\end{proposition}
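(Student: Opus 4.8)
The plan is to establish the second identity $\dist(x,\C_+)=\|x_-\|$ and then deduce the first one from it by applying it to $-x$: since $-x=x_- - x_+$ with $x_-,x_+\ge 0$ and $x_-x_+=0$ we have $(-x)_-=x_+$, while $\dist(x,-\C_+)=\inf_{p\in\C_+}\|x+p\|=\inf_{p\in\C_+}\|(-x)-p\|=\dist(-x,\C_+)$, so $\dist(-x,\C_+)=\|x_+\|$ gives exactly $\dist(x,-\C_+)=\|x_+\|$. For the upper bound in $\dist(x,\C_+)\le\|x_-\|$, I would note first that $x_+\in\C_+$: this is clear if $\C$ is unital, and if $\C$ is non-unital it follows from the continuous functional calculus applied to $t\mapsto\max\{t,0\}$, which vanishes at $0$ and hence sends $x$ into $\C$. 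Then $\dist(x,\C_+)\le\|x-x_+\|=\|{-x_-}\|=\|x_-\|$.

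For the lower bound, if $x_-=0$ then $x=x_+\in\C_+$ and both sides vanish, so I would assume $\la:=\|x_-\|>0$. Since $\B(H)$ is a unital C*-algebra with weak*-compact state space and $\|x_-\|=\sup\{\omega(x_-):\omega\text{ a state on }\B(H)\}$, I can fix a state $\omega$ on $\B(H)$ with $\omega(x_-)=\la$. The element $b:=\la I_H-x_-$ satisfies $0\le b\le\la I_H$, hence $b^2\le\la b$, and therefore $0\le\omega(b^2)\le\la\,\omega(b)=\la(\la-\la)=0$. The Cauchy--Schwarz inequality for $\omega$ then forces $|\omega(ba)|^2\le\omega(b^2)\,\omega(a^*a)=0$ for every $a\in\B(H)$, i.e.\ $\omega(a)=\la^{-1}\omega(x_-a)$; in particular $\omega(x_+)=\la^{-1}\omega(x_-x_+)=0$. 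Consequently, for any $p\in\C_+$,
\[
\omega(x-p)=\omega(x_+)-\omega(x_-)-\omega(p)=-\la-\omega(p),
\]
so $\|x-p\|\ge|\omega(x-p)|=\la+\omega(p)\ge\la$. Taking the infimum over $p\in\C_+$ gives $\dist(x,\C_+)\ge\la=\|x_-\|$, and combined with the upper bound this proves $\dist(x,\C_+)=\|x_-\|$, hence also $\dist(x,-\C_+)=\|x_+\|$.

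I do not expect a serious obstacle here; the two delicate points are (i) keeping $x_\pm$ inside $\C$ when $\C$ is non-unital, which the functional calculus with functions vanishing at $0$ takes care of, and (ii) producing a state that exactly attains $\|x_-\|$ on $x_-$ while killing $x_+$. Point (ii) is really the crux: an attaining \emph{vector} state need not exist (the norm of $x_-$ need not be an eigenvalue), so one must use an abstract state of the unital algebra $\B(H)$, and then the squeeze $b^2\le\la b$ for $b=\la I_H-x_-$ is what makes $\omega$ ``concentrate on the support of $x_-$'' and forces $\omega(x_+)=0$. As an alternative to (ii), when $x_-\neq 0$ one has $-\|x_-\|=\min\sigma_{\B(H)}(x)=\inf\{\sca{x\xi,\xi}:\|\xi\|=1\}$, so for $\eps>0$ there is a unit vector $\xi$ with $\sca{x\xi,\xi}<-\|x_-\|+\eps$, whence $\sca{(x-p)\xi,\xi}<-\|x_-\|+\eps$ for all $p\in\C_+\subseteq\B(H)_+$ and $\|x-p\|>\|x_-\|-\eps$; letting $\eps\to 0$ finishes the lower bound.
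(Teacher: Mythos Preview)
Your proof is correct. The paper argues the first identity directly and derives the second by replacing $x$ with $-x$, whereas you do the reverse; this is cosmetic. The genuine difference is in the lower bound: the paper cites \cite[Proposition~6.9]{Bre10} to obtain $\|x_+\|\le\|(x+p)_+\|\le\|x+p\|$ for every $p\in\C_+$, an order-monotonicity argument on the positive part. You instead take the dual route, producing a state $\omega$ on $\B(H)$ with $\omega(x_-)=\|x_-\|$ and using the Cauchy--Schwarz squeeze to force $\omega(x_+)=0$, so that $\omega$ witnesses $\|x-p\|\ge\|x_-\|$ uniformly in $p$; your approximate-eigenvector alternative is the same idea at the level of vector states. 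Your argument is entirely self-contained and avoids the external citation, while the paper's version is terser but rests on the quoted inequality; once unpacked, both are of comparable length and difficulty.
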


\begin{proof} 
It suffices to show the first equality; the second equality then follows by applying for $-x$. 
It is evident that 
\[
\dist(x, -\C_{+})
\leq
\|x + x_{-}\| = \|x_+\|.
\] 
On the other hand, by an application of \cite[Proposition 6.9]{Bre10}, for $p \in \C_{+}$ we get
\[
\|x_{+}\| \leq \|(x + p)_{+}\| \leq \|x + p\|.
\]
As $p$ was arbitrary we conclude $\|x_{+}\| \leq \dist(x, -\C_{+}) $.
\end{proof}

\section{Embeddings} \label{S:emb}

In this section we give a characterisation for a completely isometric complete order embedding between selfadjoint operator spaces to be an embedding. 
Our first goal is to provide  a formula that relates the contractive positive functionals of all levels to the distance to the cones of selfadjoint operator spaces. 
We will need the following proposition which can be deduced from \cite[Corollary 4.14]{vD25}; we give a direct proof that fits in our context. 

\begin{proposition} \label{P:dualcones}
Let $E\subseteq \B(H)$ be a selfadjoint operator space. If $x\in M_n(E)_{sa}$, then the following are equivalent:
\begin{enumerate}
\item $\mathrm{dist}(x, M_n(E)_+) \leq 1$.
\item $\vphi(x) \geq -1 $ for every  $\vphi \in {\rm CCP}(M_n(E), \bC)$.
\end{enumerate}
\end{proposition}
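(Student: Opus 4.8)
The plan is to prove the two implications separately, the substance being a Hahn--Banach separation argument in the real Banach space $M_n(E)_{sa}$. Since $M_n(E) \subseteq M_n(\B(H)) = \B(H^{(n)})$ is again a selfadjoint operator space and every contractive positive functional on it is automatically completely contractive and completely positive, I would abbreviate $F := M_n(E)$, identify ${\rm CCP}(M_n(E), \bC)$ with the set of contractive positive functionals $\vphi \colon F \to \bC$, and write $y := x \in F_{sa}$. For [(i) $\Rightarrow$ (ii)], let $\vphi \colon F \to \bC$ be contractive and positive and fix $p \in F_{+}$: since $\vphi$ is selfadjoint, $\vphi(y - p) \in \bR$, so $\vphi(y-p) \geq -|\vphi(y-p)| \geq -\|y-p\|$ (using $\|\vphi\| \le 1$), while $\vphi(p) \geq 0$ by positivity; hence $\vphi(y) = \vphi(y-p) + \vphi(p) \geq -\|y - p\|$, and taking the infimum over $p \in F_{+}$ gives $\vphi(y) \geq -\dist(y, F_{+}) \geq -1$.

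For [(ii) $\Rightarrow$ (i)] I would argue by contraposition. Assume $d := \dist(y, F_{+}) > 1$ and form the convex set $S := F_{+} + (F_{sa})_{1}$, where $(F_{sa})_{1}$ is the closed unit ball of $F_{sa}$. If $z = p + b \in S$ then $\|y - z\| \geq \|y - p\| - \|b\| \geq d - 1 > 0$, so $\dist(y, \ol{S}) \geq d - 1 > 0$ and in particular $y \notin \ol{S}$; by the geometric Hahn--Banach theorem there are a bounded real-linear functional $f$ on $F_{sa}$ and $\al \in \bR$ with $f(y) < \al$ and $f(z) > \al$ for all $z \in S$. Testing against $z = tp$ with $t \to \infty$ for $p \in F_{+}$ forces $f \geq 0$ on $F_{+}$; testing against $z = b \in (F_{sa})_{1}$ (which lies in $S$ since $0 \in F_{+}$) gives $\al \leq \inf_{\|b\|\leq 1} f(b) = -\|f\|$, whence $f(y) < -\|f\|$, so $f \neq 0$ and $g := \|f\|^{-1} f$ is a norm-one positive real-linear functional on $F_{sa}$ with $g(y) < -1$. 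I would then complexify: set $\vphi(a) := g\big(\tfrac{1}{2}(a + a^{*})\big) + i\, g\big(\tfrac{1}{2i}(a - a^{*})\big)$, which is complex-linear, extends $g$, satisfies $\vphi^{*} = \vphi$, and sends $F_{+}$ into $\bR_{\geq 0}$, hence is selfadjoint and positive. To see $\|\vphi\| \leq 1$, for $a \in F$ with $\|a\| \leq 1$ write $\vphi(a) = r e^{i\vthe}$ with $r \geq 0$; then $r = \vphi(e^{-i\vthe}a)$ is real, so $r = g\big(\tfrac{1}{2}(e^{-i\vthe}a + e^{i\vthe}a^{*})\big)$, and the argument is a selfadjoint element of norm at most $\|a\| \leq 1$ since the involution is isometric, whence $r \leq \|g\| = 1$. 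Thus $\vphi \in {\rm CCP}(F, \bC)$ with $\vphi(x) = g(y) < -1$, contradicting (ii).

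The step I expect to be the main obstacle is the non-closedness inherent in the problem: $F_{+} + (F_{sa})_{1}$ need not be closed and $\dist(y, F_{+})$ need not be attained, so the separation must be run against $\ol{S}$ and one still has to extract a \emph{strict} inequality $g(y) < -1$ rather than $g(y) \leq -1$; relatedly, the separating functional lives a priori only on the real space $F_{sa}$, and the one place where the selfadjoint-operator-space hypothesis is genuinely used is in checking that its complexification remains contractive, which relies on the isometry of the involution.
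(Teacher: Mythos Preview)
Your proof is correct. Both implications are handled properly: the forward direction matches the paper's essentially verbatim, and in the reverse direction your separation argument, the extraction of strict inequality $f(y) < -\|f\|$, and the complexification all go through as written.

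The paper takes a slightly different route for [(ii) $\Rightarrow$ (i)]. Rather than separating $y$ from $\ol{F_{+} + (F_{sa})_{1}}$ inside $F_{sa}$, it passes to one extra real dimension and separates the point $(x,1)$ from the closed convex cone
\[
\fC := \{(y,t) \in M_n(E)_{sa} \oplus \bR : \dist(y, M_n(E)_{+}) \leq t\}.
\]
The separating functional then decomposes as $f(y,t) = \psi(y) + t\mu$; testing against $(p,0)$, $(0,1)$, and $(\pm y, \|y\|)$ in $\fC$ shows that $\psi$ is positive, $\mu > 0$, and $\|\psi\| \leq \mu$, so $\vphi := \psi/\mu$ is the desired witness. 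This conification trick buys two simplifications over your approach: the set $\fC$ is already closed (no passage to a closure needed), and one can take the separating functional selfadjoint from the outset, so the complexification step and the accompanying norm check via the isometric involution are avoided entirely. Conversely, your approach has the virtue of staying inside $F_{sa}$ without an auxiliary dimension, and makes explicit where the operator-space hypothesis (isometric involution) enters. The two arguments are equivalent in strength and length; they simply organise the same Hahn--Banach separation differently.
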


\begin{proof}
\noindent
[(i) $\Rightarrow$ (ii)]. 
Assume that $\mathrm{dist}(x, M_n(E)_+)\leq 1$ and let $\vphi\colon M_n(E) \to \bC$ be a contractive positive functional. 
We will prove that $\vphi(x) \geq -1$. 
Towards this end, fix $\eps>0$. 
Then there exists a $p \in M_n(E)_+$ such that $ \|x - p\|< 1+\eps$. We have 
\[
-\vphi(x-p) \leq |\vphi(x - p)| \leq \|x - p\|< 1+\eps,
\]
and thus we obtain
\[
\vphi(x) = \vphi(p) + \vphi(x - p) \geq \vphi(x-p)> -(1+\eps).
\]
Since $\eps$ was arbitrary we conclude that $\vphi(x)\geq -1$, as required.

\medskip

\noindent
[(ii) $\Rightarrow$ (i)]. 
Assume that $\vphi(x) \geq -1$  for every contractive positive functional $\vphi \colon M_n(E)\to \bC$. 
We will prove that $\mathrm{dist}(x, M_n(E)_+)\leq 1$. 
Set 
\[ 
\fC := \{ (x,t)\in M_n(E)_{sa}\oplus \bR : \dist(x, M_n(E)_{+}) \leq t\},
\]
and note that $\fC$ is  a closed convex cone in $ M_n(E)\oplus \bC$. 
We assume towards a contradiction that $(x,1) \notin \fC$. 
Without loss of generality by the Hahn--Banach Separation Theorem, we may pick a selfadjoint functional $f \colon M_n(E) \oplus \bC \to \bC$ such that 
\begin{align*}
f(x,1) < 0 \leq \inf_{(y,t)\in \fC} f(y,t). 
\end{align*}
Then $f$ decomposes as $f(y,t) = \psi(y) + t \mu$ for the selfadjoint map $\psi := f|_{M_n(E)}$ and $\mu := f(0,1) \in \bR$. 
Then $\psi$ is a positive linear functional on $M_n(E)$ and $\mu \in \bR_{+}$. 
Indeed, for $p \in M_n(E)_{+}$ we have $(p,0)\in \fC$, and so $0 \leq f(p,0) = \psi(p)$. 
Similarly, we have $(0, 1) \in \fC$ and hence $0 \leq f(0,1) = \mu$.

We now prove that $\|\psi\| \leq \mu$. 
Since $\psi$ is a selfadjoint functional it suffices to show that $|\psi(y)| \leq \mu \|y\|$ for every $y \in M_n(E)_{sa}$ (see for example \cite[page 92]{Mur90}). 
We have 
\[
\dist(y, M_n(E)_{+}) \leq \|y\| \qand \dist(-y, M_n(E)_{+}) \leq \|y\|,
\]
and hence $(-y, \|y\|)$ and $(y, \|y\|)$ are elements in $\fC$. 
It follows that 
\[
\psi(-y) + \mu \|y\| = f(-y, \|y\|) \geq 0
\qand
\psi(y) + \mu \|y\| = f(y, \|y\|) \geq 0,
\]
and thus
\[\
-\mu\|y\| \leq  \psi(y) \leq \mu \|y\|,
\]
which proves the claim. 

Finally, note that $\mu>0$, and hence for $\vphi:= \psi /\mu$ in ${\rm CCP}(M_n(E), \bC)$ we get
\[
\mu(\vphi(x)+1)=\psi(x) + \mu = f(x,1) <0.
\]
We derive $\vphi(x) < -1$, and thus a contradiction.
\end{proof}

We now obtain the desired distance formula. 
We  note that the equivalence [(i) $\Leftrightarrow$ (ii)] of \cite[Theorem 8.3]{vD25} can follow from this.

\begin{theorem}\label{T:stateformulaE}
Let $E \subseteq \B(H)$ be a selfadjoint operator space. 
If $x \in M_n(E)_{sa}$, then we have
\[
\dist(x,M_n(E)_{+}) = \sup_{\vphi \in {\rm CCP}(M_n(E), \bC)} -\vphi(x).
\]
\end{theorem}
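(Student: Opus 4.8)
The plan is to deduce the distance formula directly from Proposition \ref{P:dualcones} by a homogeneity/scaling argument, treating separately the case where the distance vanishes. Write $d := \dist(x, M_n(E)_+)$ and $s := \sup_{\vphi \in {\rm CCP}(M_n(E), \bC)} -\vphi(x)$; note that $s \ge 0$ since the zero functional is completely contractive and completely positive, so $-\vphi(x) = 0$ for $\vphi = 0$. Likewise $d \ge 0$ trivially.

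\medskip

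First I would treat the nondegenerate case $d > 0$. Fix any $t > d$. Then $\dist(t^{-1} x, M_n(E)_+) = t^{-1} d < 1$ (using that $M_n(E)_+$ is a cone, so scaling by the positive scalar $t^{-1}$ rescales the distance), hence by Proposition \ref{P:dualcones} applied to $t^{-1}x$ we get $\vphi(t^{-1} x) \ge -1$, i.e. $-\vphi(x) \le t$, for every $\vphi \in {\rm CCP}(M_n(E), \bC)$; taking the supremum over $\vphi$ gives $s \le t$, and letting $t \downarrow d$ yields $s \le d$. Conversely, fix any $t$ with $0 < t < d$; then $\dist(t^{-1}x, M_n(E)_+) = t^{-1} d > 1$, so by the contrapositive of Proposition \ref{P:dualcones} (implication (ii) $\Rightarrow$ (i)) there is some $\vphi \in {\rm CCP}(M_n(E), \bC)$ with $\vphi(t^{-1} x) < -1$, i.e. $-\vphi(x) > t$, whence $s > t$; letting $t \uparrow d$ gives $s \ge d$. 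Combining, $s = d$.

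\medskip

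It remains to handle $d = 0$, i.e. $x \in \ol{M_n(E)_+}$ (equivalently $x \in M_n(E)_+$, since the cone is norm-closed). Then for every $\vphi \in {\rm CCP}(M_n(E), \bC)$ we have $\vphi(x) \ge 0$, so $-\vphi(x) \le 0$, and hence $s \le 0$; combined with $s \ge 0$ this gives $s = 0 = d$. Alternatively and more uniformly, one can observe that for every $\eps > 0$ we have $\dist(x, M_n(E)_+) \le \eps$, so Proposition \ref{P:dualcones} applied to $\eps^{-1} x$ gives $\vphi(\eps^{-1} x) \ge -1$, i.e. $-\vphi(x) \le \eps$ for all $\vphi$; letting $\eps \downarrow 0$ gives $s \le 0$, hence $s = 0$.

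\medskip

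I do not expect a genuine obstacle here: the content is entirely in Proposition \ref{P:dualcones}, and what remains is the bookkeeping of the scaling argument together with the separate treatment of the degenerate case. The one point to be careful about is ensuring the supremum is attained in the relevant sense or at least approximated — but since we only need the value of the supremum and not attainment, the $\eps$-argument above suffices, and the fact that ${\rm CCP}(M_n(E), \bC)$ contains the zero functional guarantees the supremum is over a nonempty set and is bounded below by $0$, matching $d \ge 0$.
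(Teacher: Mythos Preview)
Your proof is correct and follows exactly the approach indicated in the paper: the paper's proof is the single line ``It follows by Proposition \ref{P:dualcones} and a normalisation argument,'' and your scaling/homogeneity argument is precisely that normalisation argument written out in full.
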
 

\begin{proof}
It follows by Proposition \ref{P:dualcones} and a normalisation argument.
\end{proof}

We will connect embeddings to the notion of maximal gauges introduced by Russell \cite{Rus23}. 
Let us recall here the terminology.
Let $\vthe \colon E\to F$ be a completely isometric complete order embedding between selfadjoint operator spaces. 
Then the map $\vthe$ is called a \textit{gauge maximal isometry} if for every $n\in\bN$ we have 
\[
\dist(x,M_n(E)_+) =  \dist(\vthe^{(n)}(x),M_n(F)_+)
\foral
x\in M_n(E)_{sa}.
\]
Following \cite[Definition 4.2]{Rus23}, whenever $E$ and $F$ are gauge maximal isometric via the inclusion map $E\subseteq F$ we will just say that we have a \emph{gauge maximal inclusion}.

\begin{theorem} \label{T:equivemb}
Let $\vthe \colon E\to F$ be a completely isometric complete order embedding between selfadjoint operator spaces. 
Then the following are equivalent:
\begin{enumerate}
\item The map $\vthe$ is an embedding.
\item Every map $\vphi \in {\rm CCP}(M_n(E), \bC)$ extends to a map $\wt{\vphi} \in {\rm CCP}(M_n(F), \bC)$ with $\wt{\vphi} \circ \vthe =\vphi $, for every $n\in \bN$.
\item The map $\vthe$ is a gauge maximal isometry.
\end{enumerate}
\end{theorem}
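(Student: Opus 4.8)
The plan is to prove the cycle (i)$\,\Rightarrow\,$(ii)$\,\Rightarrow\,$(iii)$\,\Rightarrow\,$(i). The implication (i)$\,\Rightarrow\,$(ii) is exactly \cite[Corollary 2.3]{DKP25}, so the work is in the other two steps: the first rests on the distance formula of Theorem \ref{T:stateformulaE}, and the second combines that formula with Werner's (extrinsic) description of the positive cones of the unitisation.

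\textbf{(ii)$\,\Rightarrow\,$(iii).} Fix $n$ and $x\in M_n(E)_{sa}$. One inequality is free: since $\vthe$ is a completely isometric complete order embedding, for every $p\in M_n(E)_+$ the element $\vthe^{(n)}(p)$ lies in $M_n(F)_+$ and $\|\vthe^{(n)}(x)-\vthe^{(n)}(p)\| = \|x-p\|$, so $\dist(\vthe^{(n)}(x),M_n(F)_+)\le\dist(x,M_n(E)_+)$. For the reverse inequality I would use Theorem \ref{T:stateformulaE} to write $\dist(x,M_n(E)_+)=\sup\{-\vphi(x):\vphi\in{\rm CCP}(M_n(E),\bC)\}$, and for each such $\vphi$ invoke (ii) to get $\wt\vphi\in{\rm CCP}(M_n(F),\bC)$ with $\wt\vphi\circ\vthe^{(n)}=\vphi$; then $-\vphi(x)=-\wt\vphi(\vthe^{(n)}(x))\le\dist(\vthe^{(n)}(x),M_n(F)_+)$ by Theorem \ref{T:stateformulaE} applied to $F$. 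Taking the supremum over $\vphi$ closes the gap, so the two distances coincide for every $n$, which is (iii).

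\textbf{(iii)$\,\Rightarrow\,$(i).} Here I would assume, after identifying $E$ with $\vthe(E)$, that $\vthe$ is the inclusion $E\subseteq F$, so that $\vthe^{\#}\colon E^{\#}\subseteq F^{\#}$ is the inclusion of unitisations, which is unital and completely positive by \cite[Lemma 4.9]{Wer02}; it remains to see it is completely isometric, equivalently a complete order embedding (a unital complete order embedding of operator systems is automatically a complete isometry, since positivity is detected by the norm in the presence of a unit). The crux is the identity, for $(y,b)\in M_n(E^{\#})_{sa}$ (so $y\in M_n(E)_{sa}$ and $b=b^*\in M_n(\bC)$): unwinding the definition of $M_n(E^{\#})_+$ recalled in Section \ref{S:prelim} and feeding it through Theorem \ref{T:stateformulaE}, one gets that $(y,b)\in M_n(E^{\#})_+$ if and only if $b\ge 0$ and $\dist(b_\eps^{-1/2}yb_\eps^{-1/2},M_n(E)_+)\le 1$ for all $\eps>0$, with the same statement for $F$. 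Since $b_\eps^{-1/2}$ is a scalar matrix, $b_\eps^{-1/2}yb_\eps^{-1/2}\in M_n(E)_{sa}$, so gauge maximality at level $n$ makes its distances to $M_n(E)_+$ and $M_n(F)_+$ agree; hence $M_n(E^{\#})_+=M_n(E^{\#})_{sa}\cap M_n(F^{\#})_+$ for every $n$, as wanted.

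I expect the main obstacle to be precisely this last identification — matching a single matrix level of gauge maximality with the coincidence of the corresponding level of the unitised cones. The forward direction is the computation above; for the reverse (which also yields (i)$\,\Rightarrow\,$(iii) directly, should one prefer to avoid quoting \cite{DKP25}) one tests the cone equality on elements of the special form $(z,tI_n)$ with $z\in M_n(E)_{sa}$, for which the criterion above collapses to $(z,tI_n)\in M_n(E^{\#})_+\iff\dist(z,M_n(E)_+)\le t$. The one genuinely delicate point is to make sure the compressions $b_\eps^{-1/2}(\cdot)b_\eps^{-1/2}$ occurring in Werner's definition keep one inside $M_n(E)$, so that no matrix level other than the $n$-th is needed; everything else is routine bookkeeping with standard operator-system facts.
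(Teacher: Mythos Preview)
Your proposal is correct and follows essentially the same approach as the paper: the cycle (i)$\Rightarrow$(ii)$\Rightarrow$(iii)$\Rightarrow$(i), with the first implication quoted from \cite{DKP25}, the second obtained by passing through the distance formula of Theorem~\ref{T:stateformulaE}, and the third by rewriting Werner's positivity criterion for the unitisation in terms of that same distance formula and then invoking gauge maximality. The paper's proof is slightly terser (it does not spell out the ``free'' inequality in (ii)$\Rightarrow$(iii) or the routine reduction to an inclusion in (iii)$\Rightarrow$(i)), but the substance is identical.
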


\begin{proof}
Without loss of generality we may assume that $E \subseteq F$.

\smallskip

\noindent
[(i) $\Leftrightarrow$ (ii)]. 
This equivalence is shown in \cite[Lemma 2.2]{DKP25}.

\smallskip

\noindent
[(ii) $\Rightarrow$ (iii)]. 
The assumption gives that, for every $x\in M_n(E)_{sa}$ we have 
\[
\sup_{\vphi \in {\rm CCP}(M_n(F), \bC)} -\vphi(x)
=
\sup_{\vphi \in {\rm CCP}(M_n(E), \bC)} -\vphi(x),
\]
and hence Theorem \ref{T:stateformulaE} yields
\[
\dist(x,M_n(F)_+)
=
\dist(x,M_n(E)_+).
\]

\smallskip

\noindent
[(iii) $\Rightarrow$ (i)]. 
Consider $(x,a)\in M_n(E^\#)_{sa}$ such that $(x, a)\in M_{n}(F^{\#})_{+}$.
Then $a \geq 0$ and 
\[
\sup_{\vphi \in {\rm CCP}(M_n(F), \bC)} -\vphi(a_{\eps}^{-1/2} x a_{\eps}^{-1/2}) \leq 1 \foral \eps>0.
\]
Since $E\subseteq F$ is a gauge maximal inclusion, for every $\eps>0$ we have
\begin{align*}
\sup_{\vphi \in {\rm CCP}(M_n(E), \bC)} -\vphi(a_{\eps}^{-1/2} x a_{\eps}^{-1/2})
&=
\dist(a_{\eps}^{-1/2} x a_{\eps}^{-1/2},M_n(E)_+)\\
&=
\dist(a_{\eps}^{-1/2} x a_{\eps}^{-1/2}, M_n(F)_+)\\
&=
\sup_{\vphi \in {\rm CCP}(M_n(F), \bC)} -\vphi(a_{\eps}^{-1/2} x a_{\eps}^{-1/2})
\leq 1,
\end{align*}
where we used Theorem \ref{T:stateformulaE} in the first and third equalities.
Hence $(x, a)\in M_{n}(E^{\#})_{+}$, as required.
\end{proof}

We thus obtain the following corollary.

\begin{corollary} \label{C:spec}
Let $E$ be a selfadjoint operator space in a C*-algebra $\C$.
Then the inclusion $E \subseteq \C$ is an embedding if and only if for every $x \in M_n(E)_{sa}$ we have
\[
\dist(x, M_n(E)_+) = \max_{\la \in \si_{M_n(\C)}(x)} -\la.
\]
\end{corollary}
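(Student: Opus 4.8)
The plan is to deduce the statement from Theorem \ref{T:equivemb} together with Proposition \ref{P:gaugeC*}. First I would observe that the inclusion $E \subseteq \C$ is automatically a completely isometric complete order embedding of selfadjoint operator spaces: the matrix norms on $E$ are by definition those inherited from the ambient $\B(H)$ through $E \subseteq \C \subseteq \B(H)$, and $M_n(E)_+ = M_n(E)_{sa} \cap M_n(\C)_+$ (both sides being $M_n(E)_{sa} \cap M_n(\B(H))_+$), so the inclusion is a complete order embedding. Hence Theorem \ref{T:equivemb} applies and tells us that $E \subseteq \C$ is an embedding if and only if it is a gauge maximal isometry, that is, if and only if
\[
\dist(x, M_n(E)_+) = \dist(x, M_n(\C)_+) \foral x \in M_n(E)_{sa}, n \in \bN.
\]

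It then remains to rewrite the right-hand side in spectral terms. For each $n$, $M_n(\C)$ is again a C*-algebra, so Proposition \ref{P:gaugeC*}, applied with $M_n(\C)$ in place of $\C$, gives $\dist(x, M_n(\C)_+) = \|x_-\|$ for every $x \in M_n(\C)_{sa}$, where $x = x_+ - x_-$ is the Jordan decomposition. Finally I would identify $\|x_-\|$ with $\max_{\la \in \si_{M_n(\C)}(x)} -\la$ via the continuous functional calculus: writing $x_- = g(x)$ for $g(t) := \max\{-t, 0\}$, a continuous function vanishing at $0$ (so that the calculus is available even when $M_n(\C)$ is not unital), one has $\|x_-\| = \|g(x)\| = \max_{\la \in \si_{M_n(\C)}(x)} g(\la)$, and since $0 \in \si_{M_n(\C)}(x)$ (automatic when $M_n(\C)$ is non-unital, the case of interest) this last quantity equals $\max_{\la \in \si_{M_n(\C)}(x)} (-\la)$. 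Chaining the three equalities closes the argument.

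I expect the only genuinely delicate point to be this spectral bookkeeping: $\si_{M_n(\C)}(x)$ must be understood as the spectrum computed in the forced unitisation when $M_n(\C)$ is non-unital, so that $0$ belongs to it and $\max_{\la}(-\la)$ is the non-negative number $\|x_-\| = \dist(x, M_n(\C)_+)$ rather than a possibly negative quantity; everything else is a direct citation of Theorem \ref{T:equivemb} and Proposition \ref{P:gaugeC*}. For completeness I would also record that, since neither $M_n(E)_+$, nor the spectrum $\si_{M_n(\C)}(x)$ of an element $x \in M_n(E)$, nor the property of the inclusion being an embedding (by the discussion preceding Proposition \ref{P:concreal}) depends on the choice of C*-algebra $\C$ with $E \subseteq \C \subseteq \B(H)$, the criterion may equivalently be phrased with $\C = \ca(E)$, as stated in the introduction.
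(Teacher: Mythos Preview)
Your proposal is correct and follows exactly the paper's approach: combine the equivalence [(i) $\Leftrightarrow$ (iii)] of Theorem \ref{T:equivemb} with Proposition \ref{P:gaugeC*} applied to $M_n(\C)$. The paper's proof is a one-line citation of these two results; your expanded version, including the observation that the inclusion is automatically a completely isometric complete order embedding and the careful spectral bookkeeping identifying $\|x_-\|$ with $\max_{\la}(-\la)$, fills in precisely the details that the paper leaves implicit.
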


\begin{proof}
This follows from Proposition \ref{P:gaugeC*} and the equivalence [(i) $\Leftrightarrow$ (iii)] of Theorem \ref{T:equivemb}.
\end{proof}

As a further corollary we obtain \cite[Theorem 4.4 and Theorem 4.5]{Rus23}.

\begin{corollary}\label{C:incl} \cite[Theorem 4.4 and Theorem 4.5]{Rus23}
The following hold:
\begin{enumerate}
\item If $\S\subseteq \T$ are operator systems with $1_\S=1_\T$, then $\S\subseteq \T$ is a gauge maximal inclusion.
\item If $\C$ is a C*-subalgebra of a C*-algebra $\D$ then $\C\subseteq \D$ is a gauge maximal inclusion.
\end{enumerate}
\end{corollary}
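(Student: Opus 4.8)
The plan is to derive both parts from Theorem~\ref{T:equivemb}, which characterises gauge maximal isometries among completely isometric complete order embeddings via the matrix-level extension property for contractive positive functionals.

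For part~(i), I would first invoke the standard fact that the inclusion of operator systems $\S\subseteq\T$ with $1_\S=1_\T$ is a unital complete order embedding and hence completely isometric; together with the observation that each cone $M_n(\S)_+$ is the one induced from $\B(H)$, hence from $M_n(\T)$, this makes the inclusion a completely isometric complete order embedding of the underlying selfadjoint operator spaces, so Theorem~\ref{T:equivemb} is applicable. It then suffices to verify its condition~(ii). Let $\vphi\in{\rm CCP}(M_n(\S),\bC)$; since $M_n(\S)$ is an operator system with unit $I_n\otimes 1_\S$ and $\vphi$ is positive, it attains its norm at the unit, $\|\vphi\|=\vphi(I_n\otimes 1_\S)$. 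By the Hahn--Banach theorem $\vphi$ extends to $\wt{\vphi}\colon M_n(\T)\to\bC$ with $\|\wt{\vphi}\|=\|\vphi\|$, and then $\wt{\vphi}(I_n\otimes 1_\T)=\vphi(I_n\otimes 1_\S)=\|\vphi\|=\|\wt{\vphi}\|$, so $\wt{\vphi}$ attains its norm at the unit of $M_n(\T)$ and is therefore positive; as $\|\wt{\vphi}\|=\|\vphi\|\le1$ it is also contractive, so $\wt{\vphi}\in{\rm CCP}(M_n(\T),\bC)$ and it restricts to $\vphi$. By the implication [(ii) $\Rightarrow$ (iii)] of Theorem~\ref{T:equivemb} the inclusion $\S\subseteq\T$ is then a gauge maximal inclusion.

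For part~(ii), the shortest route is to recall from Section~\ref{S:prelim} that an injective $*$-homomorphism between C*-algebras is an embedding; applying this to $\C\subseteq\D$ and invoking the implication [(i) $\Rightarrow$ (iii)] of Theorem~\ref{T:equivemb} shows that $\C\subseteq\D$ is a gauge maximal inclusion. Alternatively, one can argue directly: the inclusion is a completely isometric complete order embedding, and for $x\in M_n(\C)_{sa}$ the decomposition $x=x_+-x_-$ with $x_\pm\in M_n(\C)_+$ and $x_+x_-=0$ is obtained by continuous functional calculus, which is unchanged under the inclusion $M_n(\C)\subseteq M_n(\D)$, so it is also the canonical decomposition of $x$ inside $M_n(\D)$; Proposition~\ref{P:gaugeC*}, applied in $M_n(\C)$ and in $M_n(\D)$, then gives $\dist(x,M_n(\C)_+)=\|x_-\|=\dist(x,M_n(\D)_+)$, i.e.\ gauge maximality.

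I do not expect a real obstacle: with Theorem~\ref{T:equivemb} and Proposition~\ref{P:gaugeC*} in hand both parts are essentially immediate. The points requiring attention are the elementary operator-system facts used in~(i) --- that a positive functional on a unital operator system attains its norm at the unit, that a functional attaining its norm at the unit is positive, and that a unital complete order embedding of operator systems is completely isometric --- and, in the direct version of~(ii), the classical permanence of the continuous functional calculus, equivalently of the positive and negative parts, under passing to a C*-subalgebra.
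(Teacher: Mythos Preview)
Your proposal is correct and follows essentially the same approach as the paper: both parts are derived from Theorem~\ref{T:equivemb} via the extension of contractive positive functionals at every matrix level, and you simply spell out the standard Hahn--Banach/norm-at-unit argument that the paper leaves implicit. Your alternative routes for~(ii) --- via the preliminaries' fact that injective $*$-homomorphisms are embeddings, or directly via Proposition~\ref{P:gaugeC*} --- are valid shortcuts but amount to minor variations on the same idea.
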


\begin{proof}
Both cases follow from Theorem \ref{T:equivemb} since we can extend contractive positive functionals from the subspace to the superspace at any matrix level.
\end{proof}

In Corollary \ref{C:emb} we consider the non-unital inclusion of operator systems.
Using Theorem \ref{T:equivemb} we also obtain an alternative proof of \cite[Corollary 4.17]{Wer02}. 

\begin{corollary}\label{C:C*-unit}\cite[Corollary 4.17]{Wer02}
Let $\C$ be a C*-algebra. 
Then the operator system unitisation of $\C$ is unital completely order isomorphic to the C*-algebraic unitisation of $\C$.
\end{corollary}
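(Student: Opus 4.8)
The statement to prove is Corollary \ref{C:C*-unit}: for a C*-algebra $\C$, the operator system unitisation $\C^\#$ (in Werner's sense) is unital completely order isomorphic to the C*-algebraic unitisation of $\C$. The natural strategy is to invoke Theorem \ref{T:equivemb} together with the already-recorded fact (Corollary \ref{C:incl}(ii)) that a C*-subalgebra of a C*-algebra is a gauge maximal inclusion, hence an embedding.

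**Main steps.** First I would fix a faithful nondegenerate representation $\C \subseteq \B(H)$ and recall from the discussion preceding Proposition \ref{P:concreal} how the C*-algebraic unitisation sits concretely: if $\C$ is nonunital, or unital with $1_\C \neq I_H$, the C*-unitisation is $*$-isomorphic to $\C + \bC I_H$, and if $\C$ is unital with $1_\C = I_H$ it is $*$-isomorphic to $\C \oplus \bC \cong \C + \bC I_{H\oplus\bC}$. Second, I would observe that $\C$, viewed as a selfadjoint operator space, is a C*-subalgebra of itself, so by Corollary \ref{C:incl}(ii) the inclusion is a gauge maximal isometry, and hence by the equivalence [(i) $\Leftrightarrow$ (iii)] of Theorem \ref{T:equivemb} the inclusion $\C \subseteq \C$ is an embedding of selfadjoint operator spaces. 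Third, with the embedding property in hand I would apply Proposition \ref{P:concreal}: in the nonunital case (or unital with $1_\C\neq I_H$) it gives $\C^\# \cong \C + \bC I_H$ completely order isomorphically, and in the remaining case $\C^\# \cong \C + \bC I_{H\oplus\bC}$; in either situation the right-hand side is exactly the concrete realisation of the C*-algebraic unitisation recorded in Step 1. Finally, I would note that the identifying map is unital by construction — it sends the order unit $(0,1)$ of $\C^\#$ to the identity of the C*-unitisation — and is a complete order isomorphism, giving the claim.

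**Where the work lies.** There is essentially no hard analytic step here: the content has been front-loaded into Theorem \ref{T:equivemb} and Proposition \ref{P:concreal}. The only thing to be careful about is the bookkeeping in the two cases of unitality (and which $I_H$ one means), and to confirm that the completely order isomorphic identification $\C^\# \cong \C + \bC I$ from Proposition \ref{P:concreal} does restrict to the expected $*$-isomorphism on the C*-unitisation — but this is immediate because a unital complete order isomorphism between C*-algebras that are generated (as C*-algebras) by the images of $\C$ and of the unit is automatically a $*$-isomorphism. Thus the proof is a three-line application of the earlier results; I expect the referee-facing subtlety to be purely the case split, not any genuine obstacle.
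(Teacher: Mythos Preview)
Your proposal has a circularity problem. Proposition \ref{P:concreal}, which you invoke in Step~3, already relies on Werner's identification of $\C^\#$ with the C*-algebraic unitisation: its proof begins with ``Recall that, if $\C \subseteq \B(H)$ is a C*-algebra, then we have the following cases for identifying $\C^\#$\ldots'' and then uses precisely the identification $\ca(E)^\# \cong \ca(E) + \bC I_H$ (or $\ca(E) \oplus \bC$) to pin down $E^\#$ inside it. When you apply Proposition \ref{P:concreal} with $E = \C$, you are therefore assuming what you want to prove. The whole point of Corollary \ref{C:C*-unit} in the paper is to give an \emph{alternative} derivation of Werner's fact via Theorem \ref{T:equivemb}, so routing through a proposition that already consumes that fact defeats the purpose. (There is also a minor slip: you write ``the inclusion $\C \subseteq \C$'' where you need $\C \subseteq \B(H)$; Corollary \ref{C:incl}(ii) with $\D = \B(H)$ gives this, but the self-inclusion is vacuous.)

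The paper's proof avoids this by arguing directly. It uses Corollary \ref{C:incl}(ii) to see that $\C \subseteq \wt{\C}$ (the C*-algebraic unitisation) is an embedding, hence $\io \colon \C^\# \hookrightarrow (\wt{\C})^\#$ is a unital complete order embedding. Then it invokes a \emph{different} result of Werner, namely \cite[Lemma 4.9(b)]{Wer02} for operator systems, to identify $(\wt{\C})^\# \cong \wt{\C} \oplus \bC$. Composing $\io$ with the projection onto the first coordinate gives a unital completely positive bijection $\Phi \colon \C^\# \to \wt{\C}$, and the inverse is shown to be completely positive by a short direct check using that $\io$ is a complete order embedding. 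To repair your approach you would need to replace the appeal to Proposition \ref{P:concreal} with this kind of direct argument.
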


\begin{proof}
For convenience, let us denote the C*-algebraic unitisation of a C*-algebra $\C$ by $\wt{\C}$.
By Corollary \ref{C:incl} the inclusion $\C \subseteq \wt{\C}$ is an embedding, and hence by Theorem \ref{T:equivemb} the canonical map $\io \colon \C^{\#} \to (\wt{\C})^{\#}$ is a unital complete order embedding. 
As $\wt{\C}$ is an operator system, by \cite[Lemma 4.9(b)]{Wer02} we have $ (\wt{\C})^{\#} \cong \wt{\C}\oplus \bC$ via $(c, a) \mapsto(c + a e, a)$, where the direct sum is in the $\ell^{\infty}$ sense and $e$ is the adjoined unit of $\wt{\C}$. 
Now let $\Phi \colon \C^\# \to \wt{\C}$  be the composition of $\io$ by the projection to the first coordinate of $ \wt{\C} \oplus \bC$, that is
\[
\Phi(c, a) := c + a e \text{ for } (c, a) \in \C^\#.
\] 
Then $\Phi$ is a bijection, and a unital completely positive map as a composition of unital completely positive maps. 
On the other hand, consider $c + a \otimes e \in M_n(\wt{\C})_{+}$ for $a \in M_n(\bC)$. 
Then we have $a \in M_n(\bC)_{+}$, and thus 
\[
\io^{(n)}(c, a)=(c + a \otimes e, a) \in M_n((\wt{\C})^{\#})_{+}.
\]
Since $\io$ is a complete order embedding, we conclude that $(c, a) \in M_n(\C^{\#})_{+}$. 
\end{proof}

\begin{example} \label{E:exsp}
Theorem \ref{T:equivemb} provides a criterion for detecting inclusions that are not embeddings.
Let $E$ be a selfadjoint operator space inside a C*-algebra $\C$.
If there is an $x \in M_n(E)_{sa}$ and a $\vphi \in {\rm CCP}(M_n(E), \bC)$ with $\|\vphi\| = 1$ such that $\vphi(x) \notin \ol{{\rm conv}}\{\si_{\C}(x)\}$, then the inclusion $E \subseteq \C$ is not an embedding.
Indeed, if the inclusion $E \subseteq \C$ were an embedding, then $\vphi$ would have an extension $\wt{\vphi} \in {\rm CCP}(M_n(\C), \bC)$.
As $x$ is selfadjoint it follows that $\vphi(x) = \wt{\vphi}(x) \in \ol{{\rm conv}}\{\si_{\C}(x)\}$ (see for example \cite[Lemma 2.10]{Pau02}), which is a contradiction.
\end{example}

As another application of Theorem \ref{T:equivemb} we recover \cite[Corollary 4.2]{Rus23} and \cite[Lemma 4.2 (a)]{Ng11}.
We note that the equivalence [(i) $\Leftrightarrow$ (ii)] below is shown to hold in the broader context of MOS-embeddings in \cite[Lemma 4.2 (a)]{Ng11}.

\begin{theorem} \label{T:arvemb} \cite[Corollary 4.2]{Rus23}
Let $\vthe \colon E\to F$ be a completely isometric complete order embedding between selfadjoint operator spaces.
The following are equivalent:
\begin{enumerate}
\item The map $\vthe$ is an embedding.
\item Every map $\phi \in {\rm CCP}(E, M_k(\bC))$ extends to a map $\wt{\phi}\in {\rm CCP}( F, M_k(\bC))$ with $\wt{\phi}\circ \vthe = \phi$, for every $k\in \bN$.
\item Every map $\phi\in {\rm CCP}(E ,\B(K))$ extends to a map $\wt{\phi}\in {\rm CCP}(F , \B(K))$ with $\wt{\phi}\circ \vthe = \phi$, for every Hilbert space $K$.
\end{enumerate}
\end{theorem}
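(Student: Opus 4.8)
The plan is to prove the cycle of implications [(iii) $\Rightarrow$ (ii) $\Rightarrow$ (i) $\Rightarrow$ (iii)]; as usual we identify $E$ with a subspace of $F$ and take $\vthe$ to be the inclusion. The implication [(iii) $\Rightarrow$ (ii)] is immediate on specialising $K=\bC^k$, so the content is in the other two arrows.

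For [(i) $\Rightarrow$ (iii)] I would pass through the unitisation and Arveson's Extension Theorem for operator systems. Fix a Hilbert space $K$ and $\phi\in{\rm CCP}(E,\B(K))$. Since $\vthe$ is an embedding, $\vthe^\#\colon E^\#\to F^\#$ is a unital complete order embedding, so we regard $E^\#$ as an operator subsystem of $F^\#$. By \cite[Lemma 4.9]{Wer02} the map $\phi^\#\colon E^\#\to\B(K)^\#$ is unital and completely positive, and by \cite[Lemma 4.9(b)]{Wer02} we may identify $\B(K)^\#$ with the C*-algebra $\B(K)\oplus\bC$, which is injective (it is the range of a unital completely positive projection of $\B(K\oplus\bC)$). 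Hence Arveson's theorem produces a unital completely positive $\Psi\colon F^\#\to\B(K)^\#$ with $\Psi|_{E^\#}=\phi^\#$. Composing $\Psi$ with the canonical $*$-epimorphism $\B(K)\oplus\bC\to\B(K)$ and restricting to the copy of $F$ yields $\wt{\phi}\in{\rm CCP}(F,\B(K))$ (a restriction of a unital completely positive map to a selfadjoint subspace), and the identity $\phi^\#(x,0)=(\phi(x),0)$ gives $\wt{\phi}\circ\vthe=\phi$. Taking $K=\bC^k$ also gives [(i) $\Rightarrow$ (ii)].

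The substantive step is [(ii) $\Rightarrow$ (i)]. By Theorem \ref{T:equivemb} it suffices to extend every $\vphi\in{\rm CCP}(M_n(E),\bC)$ to a map in ${\rm CCP}(M_n(F),\bC)$. The crucial point is to obtain a Stinespring-type representation of $\vphi$ through a completely positive map defined on $E$ itself together with a \emph{unit} vector: the bare Arveson correspondence of Remark \ref{R:Arv_cor} turns $\vphi$ into a completely contractive completely positive map $E\to M_n(\bC)$, but the associated vector has squared norm $n$, so a naive extension argument only delivers a positive functional of norm up to $n$. To fix this, observe first that $M_n(E)^\#\to M_n(E^\#)$, $(x,a)\mapsto(x,a\otimes I_n)$, is a unital complete order embedding of operator systems (with image $M_n(E)\oplus\bC I_n$): indeed, at every matrix level the $\eps$-perturbations and inverse square roots commute with tensoring by $I_n$, so Werner's defining inequalities for $M_n(E^\#)$ coincide with those for $M_n(E)^\#$. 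Then $\vphi^\#$ is unital completely positive on $M_n(E)^\#$, so Arveson's theorem extends it along this embedding to a unital completely positive $\Theta\colon M_n(E^\#)\to\bC^\#$. Since $M_n(E^\#)$ is $M_n$ over the operator system $E^\#$, dilating $\Theta$ (extend to a C*-cover, apply Stinespring, and use the amplification form of representations of $M_n\otimes A$) yields a Hilbert space $L_0$, a unital completely positive $\rho\colon E^\#\to\B(L_0)$ and an isometry $V\colon\bC^2\to\bC^n\otimes L_0$ with $\Theta(x)=V^*(\id_{M_n(\bC)}\otimes\rho)(x)V$. Writing $\Psi:=\rho|_E\in{\rm CCP}(E,\B(L_0))$ and $\zeta:=Ve_1$ (a unit vector), the identity $\vphi(y)=\langle\vphi^\#(y,0)e_1,e_1\rangle$ unwinds to
\[
\vphi(y)=\langle\Psi^{(n)}(y)\zeta,\zeta\rangle \foral y\in M_n(E).
\]
As $\zeta$ lies in $\bC^n$ tensored with the span of its (at most $n$) components, compressing $\Psi$ to that span lets us take $L_0=\bC^m$ with $m\le n$. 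Now (ii) extends $\Psi$ to $\wt{\Psi}\in{\rm CCP}(F,M_m(\bC))$ with $\wt{\Psi}\circ\vthe=\Psi$, and $\wt{\vphi}(z):=\langle\wt{\Psi}^{(n)}(z)\zeta,\zeta\rangle$ is the required extension: it is positive because $\wt{\Psi}$ is completely positive, contractive because $|\wt{\vphi}(z)|\le\|\wt{\Psi}^{(n)}(z)\|\,\|\zeta\|^2\le\|z\|$, and it restricts to $\vphi$ on $M_n(E)$ since $\wt{\Psi}\circ\vthe=\Psi$ applies entrywise. This closes the cycle, and in passing recovers \cite[Lemma 4.2(a)]{Ng11} and \cite[Corollary 4.2]{Rus23}.

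I expect the main obstacle to be exactly that normalisation mismatch in [(ii) $\Rightarrow$ (i)]: the classical Arveson correspondence between functionals on $M_n(E)$ and completely positive maps out of $E$ does not preserve the contractivity constant, so one cannot argue directly from Remark \ref{R:Arv_cor} and is forced to route through the operator system $M_n(E^\#)$ and the amplification structure of completely positive maps on $M_n(\,\cdot\,)$ to recover a representation by a genuine unit vector. The remaining arrows, by contrast, are routine once one has the unitisation picture and Arveson's Extension Theorem.
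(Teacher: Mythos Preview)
Your proof is correct, but it follows a genuinely different route from the paper's in two places.

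For the passage from (i) to (iii), the paper first proves [(i) $\Rightarrow$ (ii)] exactly as you do (Arveson's theorem on $E^\#\subseteq F^\#$ with finite-dimensional target), and then obtains (iii) from (ii) by a BW-compactness argument: compress $\phi$ to finite-dimensional subspaces of $K$, extend each compression using (ii), and take a BW-cluster point of the resulting net in the compact set ${\rm CCP}(F,\B(K))$. Your direct route, observing that $\B(K)^\#\cong\B(K)\oplus\bC$ is injective, bypasses this compactness step entirely.

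The main divergence is in closing the cycle. The paper proves [(iii) $\Rightarrow$ (i)] through the gauge-maximality criterion of Theorem \ref{T:equivemb}(iii): fix a unital complete order embedding $j\colon E^\#\to\B(H)$, so that $\phi:=j|_E$ is an embedding and hence a gauge maximal isometry; by (iii) extend $\phi$ to $\wt\phi\in{\rm CCP}(F,\B(H))$; then for any $p\in M_n(F)_+$ one has $\wt\phi^{(n)}(p)\in M_n(\B(H))_+$, whence
\[
\dist(x,M_n(E)_+)=\dist(\phi^{(n)}(x),M_n(\B(H))_+)\le\|\wt\phi^{(n)}(x-p)\|\le\|x-p\|,
\]
giving the gauge inequality with a single extension. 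Your [(ii) $\Rightarrow$ (i)] instead manufactures a Stinespring-type representation $\vphi(y)=\langle\Psi^{(n)}(y)\zeta,\zeta\rangle$ with $\Psi\in{\rm CCP}(E,M_m(\bC))$ and $\|\zeta\|=1$, by routing through the operator-system embedding $M_n(E)^\#\hookrightarrow M_n(E^\#)$ and the amplification form of representations of $M_n\otimes C$. This is correct and yields a direct implication (ii) $\Rightarrow$ (i) without passing through (iii), and it has the merit of making explicit why the naive Arveson correspondence of Remark \ref{R:Arv_cor} fails here; but it deploys substantially more machinery (two applications of Arveson's theorem, a C*-cover, Stinespring, and the structure of representations of matrix algebras). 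The paper's distance-formula approach is shorter and sidesteps the normalisation obstacle you identify rather than resolving it head-on.
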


\begin{proof}
Without loss of generality we may assume that $E \subseteq F$.

\smallskip

\noindent
[(i) $\Rightarrow$ (ii)]. 
By \cite[Lemma 4.6]{Wer02}, every completely contractive completely positive map $ \phi \colon E \to M_k(\bC)$ extends to a unital completely positive map $ \phi^{\#}\colon E^{\#} \to M_k(\bC)$. 
Since $E^\# \subseteq F^\#$ is a unital inclusion of operator systems, by Arveson's Extension Theorem, the map $\phi^{\#}$ extends to a unital completely positive map $\wt{\phi^{\#}}\colon F^{\#} \to M_k(\bC)$ which in turn restricts to a completely contractive completely positive map $\wt{\phi}\colon F \to M_k(\bC)$ that extends $\phi$.

\medskip

\noindent
[(ii) $\Rightarrow$ (iii)].
Consider a completely contractive completely positive map $\phi \colon E \to \B(K)$. 
We write $(\phi_{\F})_{\F}$ for the net of compressions of $\phi$ into the finite dimensional subspaces of $K$ indexed by inclusion, i.e., $\phi_{\F}(x)=P_{\F}\phi(x)P_{\F} $ for the projection $P_{\F}$ onto $\F \subseteq K$.
Note that $\phi_{\F}$ acts on $\B(P_{\F} K)\cong M_k(\bC)$ where $k= \rank P_{\F}$, and it is a completely contractive completely positive map. 
By assumption, it extends to a completely contractive completely positive map 
\[
 \wt{\phi}_{\F} \colon F \to M_{k}(\bC) \subseteq \B(K).
\] 
Since the set ${\rm CCP}(F, \B(K))$
is compact in the BW-topology (see \cite[Theorem 7.4]{Pau02}), the net $(\wt{\phi}_{\F})_{\F}\subseteq {\rm CCP}(F, \B(K))$ has a cluster point $\wt{\phi}$, and take a subnet converging to it.
By definition $\wt{\phi}$ is a completely contractive completely positive extension of $\phi$ on $F$.

\medskip

\noindent
[(iii) $\Rightarrow$ (i)].
The proof of this implication is essentially the “only if” part of \cite[Theorem 4.3]{Rus23}. 
We include a proof that fits in our setting. 
By Theorem \ref{T:equivemb}, and since $M_n(E)_+\subseteq M_n(F)_+$, we only need to prove that 
\[
\dist(x, M_n(E)_{+})
\leq
\dist(x, M_n(F)_{+}).
\]
Let $j \colon E^{\#} \to \B(H)$ be a unital complete order embedding. 
By Remark \ref{R:covers} the map
\[
\phi:=j|_E\colon E\to\B(H)
\]
is an embedding and hence it is a gauge maximal isometry by Theorem \ref{T:equivemb}. 
By assumption, the map $\phi$ extends to a completely contractive completely positive map $\wt{\phi}\colon F \to \B(H)$. 
For $p \in M_n(F)_{+}$ we have $\wt{\phi}^{(n)}(p) \in M_n(\B(H))_{+}$, and therefore
\begin{align*}
\dist(x, M_n(E)_{+})
=
\dist(\phi^{(n)}(x), M_n(\B(H))_{+})
\leq 
\|\wt{\phi}^{(n)}(x) - \wt{\phi}^{(n)}(p) \|
\leq 
\|x-p\|.
\end{align*}
Taking the infimum over all $p \in M_n(F)_{+}$ completes the proof.
\end{proof}

\begin{example}\label{E:torus}
Let $\bT$ denote the unit circle in the complex plane and let $ {\rm C}(\bT)$ denote the C*-algebra of continuous functions on $\bT$. Define the selfadjoint operator space 
\[
E:= \{ f\in {\rm C}(\bT): f(z)= \al z + \be\bar z , \; \al, \be \in \bC \},
\] 
and note that $\ca(E)={\rm C}(\bT)$. 
Then the inclusion $E\subseteq {\rm C}(\bT)$ is an embedding. 

By Theorem \ref{T:arvemb} it suffices to show that any completely contractive completely positive map $\phi\colon E \to \B(K)$ admits a completely contractive completely positive extension $\wt{\phi} \colon {\rm C}(\bT) \to \B(K)$.
We begin by showing that the positive cone $E_{+}$ is trivial. 
Indeed, any $f \in E_+$ satisfies $ \al =\bar \be$ and thus $ f(z)= 2 {\rm Re}( \al z)$ for every $z\in \bT$. By writing $\al = r e^{i \theta} $ with  $ r \geq 0 $ and $\theta \in [0,2\pi)$, we have $0 \leq f(-e^{-i\theta})= 2{\rm{Re}}(\al (- e^{-i \theta})) = -2 r$. 
This yields $\al=0$ and thus $f=0$.

Therefore every completely contractive complete order embedding $\phi \colon E \to \B(K)$ is uniquely determined by the contraction $T := \phi (z)$, for the coordinate function $z$ in ${\rm C}(\bT)$. 
By Sz.-Nagy's dilation theorem, let $U \in \B(L)$ be a unitary dilation of $T$ for $L \supseteq K$. 
By the universal property of the full group C*-algebra $\ca(\bZ)\cong {\rm C}(\bT)$ we obtain a unital $*$-homomorphism 
\[
\Phi \colon {\rm C}(\bT) \to \B(L); \Phi(z) := U,
\]
and the unital (completely contractive) completely positive map $P_K \Phi|_K$ is by definition an extension of $\phi$ on ${\rm C}(\bT)$.
Thus by Theorem \ref{T:arvemb} the inclusion $E \subseteq {\rm C}(\bT)$ is an embedding.

More generally, let $ \bF_{n}$ denote the free group on $n$ generators, and let $E_{n}$ be the selfadjoint operator space spanned by the unitary generators of the full group C*-algebra $\ca(\bF_{n})$, i.e., $E_{n}:= \spn \{u_{i}, u^{*}_{j}: i,j =1,\dots , n \}$. 
Then the inclusion $ E_{n} \subseteq \ca(\bF_{n})$ is an embedding.

By Theorem \ref{T:arvemb} it suffices to show that any completely contractive completely positive map $\phi\colon E_{n} \to \B(K)$ admits a completely contractive completely positive extension on $\ca(\bF_{n})$. 
For $i =1,\dots , n$ consider the contractions $v_{i}:=\phi(u_{i})\in \B(K)$. 
Consider also the unitary dilations 
\[
\wh{v_i}:= \begin{bmatrix}
v_i & (1 - v_iv_i^*)^{1/2}\\
( 1- v_i^*v_i)^{1/2} & - v_i^*
\end{bmatrix} \in \B(K^{(2)}).
\]
By the universal property of $\ca(\bF_{n})$, we obtain a unital $*$-homomorphism 
\[
\Phi\colon \ca(\bF_n ) \to \B(K^{(2)}); \Phi(u_i) = \wh{v_i}.
\]
Then the compression $P_K \Phi|_K$ is a unital (completely contractive) completely positive extension of $\phi$ on $\ca(\bF_n)$, as required.
\end{example}

We end this section with the connection between Werner's unitisation of matrix ordered operator spaces and Russell's unitisation of matrix gauge $*$-vector spaces. 
Following \cite[Definition 3.1]{Rus23}, a \emph{proper matrix gauge} on a $*$-vector space $V$ is a sequence of functions 
\[
\nu_n \colon M_n(V)_{sa} \to [0,\infty), n \in \bN,
\]
that satisfy the following conditions:
\begin{enumerate}
\item If $x,y\in M_n(V)_{sa}$, then $\nu_n(x+y)\leq \nu_n(x)+\nu_n(y)$.
\item If $x\in M_n(V)_{sa}$ and $a\in M_{n,k}$, then $\nu_k(a^*xa)\leq \|a\|^2 \nu_n(x)$.
\item If $x\in M_n(V)_{sa}$ and $y\in M_m(V)_{sa}$, then $\nu_{n+m}(x\oplus y)=\max\{\nu_n(x), \nu_m(y)\}$.
\item If $x\in M_n(V)_{sa}$ and $\nu_n(x)=\nu_n(-x)=0$, then $x=0$.
\end{enumerate}
The pair $(V,\nu)$ is called a \emph{matrix gauge $*$-vector space}.
In \cite[Corollary 4.1]{Rus23} it is shown that any matrix gauge $*$-vector space $(V,\nu)$ is a selfadjoint operator space in our sense, with the positive cones defined by
\[
C_n:=\{x\in M_n(V)_{sa} : \nu_n(-x)=0\} \foral n \in \bN,
\]
and the norms defined by
\[
\|x\|_n:=\max\left\{\nu_{2n}\left(\begin{bmatrix}
0 & x \\ x^* & 0 
\end{bmatrix}\right), \nu_{2n}\left(\begin{bmatrix}
0 & -x \\ -x^* & 0 
\end{bmatrix}\right)\right\}
\foral x\in M_n(V) \text{ and } n\in\bN. 
\]
Moreover, in \cite[Theorem 4.1]{Rus23} it is proven that if $E\subseteq \B(H)$ is a selfadjoint operator space, then the \emph{maximal gauge} defined by 
\[
\nu_n^{\max}(x):= \dist(x,-M_n(E)_+) \text{ for every } x\in M_n(E)_{sa} \text{ and } n\in \bN,
\]
defines a proper matrix gauge on $E$, and the induced matrix cones and norms from this gauge coincide with the matrix cones and norms inherited from $\B(H)$.
A map $\phi \colon (V,\nu) \to (W,\om)$ between matrix gauge $*$-vector spaces is called \emph{completely gauge contractive} (resp. \emph{isometric}) if $\om_n(\phi^{(n)}(x)) \leq \nu_n(x)$ (resp. $\om_n(\phi^{(n)}(x)) =\nu_n(x)$) for every $x\in M_n(V)_{sa}$ and $n\in \bN$.

For a matrix gauge $*$-vector space $(V,\nu)$ Russell \cite[Definition 3.4]{Rus23} defines a proper matrix gauge $\wh{\nu}$ on the $*$-vector space $\wh{V}:=V\oplus \bC$ by setting
\[
\nu_n(x,a):=\inf\{ t>0 : 0 \leq tI_n-a \in  {\rm Inv}(M_n(\bC)) \text{ and } \nu_n((tI_n-a)^{-1/2} x (tI_n-a)^{-1/2}))\leq 1\},
\]
for every $(x, a)\in M_n(\wh{V})_{sa}$ and every $n\in \bN$.
In \cite[Theorem 3.1]{Rus23} it is shown that the unitisation $(\wh{V},\wh{\nu})$  is an operator system and that the inclusion map 
\[
V \hookrightarrow \wh{V}; x \mapsto (x,0),
\]
is completely gauge isometric. 
Moreover, in \cite[Corollary 3.2]{Rus23} it is proven that if $\S$ is an operator system with unit $e$ and $\phi\colon V \to \S$ is completely gauge contractive with respect to the maximal gauge of $\S$, then the map 
\[
\wh{\phi} \colon \wh{V} \to \S ; (x,a) \mapsto \phi(x)+a e,
\]
is unital completely positive.
The following corollary shows the canonical connection between Werner's unitisation and Russell's unitisation of $E$ when endowed with the maximal gauge.

\begin{corollary} \label{C:Wun=Run}
Let $E$ be a selfadjoint operator space and let $(\wh{E},\wh{\nu}^{\max})$ be Russell's unitisation of the maximal gauge defined on $E$. 
Then $E^\#$ and $\wh{E}$ are isomorphic operator systems.
\end{corollary}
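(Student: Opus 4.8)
The plan is to exhibit mutually inverse unital completely positive maps between $\wh{E}$ and $E^\#$, each of which is the identity on the common underlying $*$-vector space $E\oplus\bC$. Since the matrix norms of an operator system are determined by its matrix cones together with its unit, any unital bijection that is completely positive with completely positive inverse is automatically a unital complete order isomorphism of operator systems; so it suffices to produce these two maps.

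For the map $\wh{E}\to E^\#$, I would apply Russell's universal property \cite[Corollary 3.2]{Rus23} to the canonical inclusion $\iota_E\colon E\to E^\#$, where $E$ carries its maximal gauge $\nu^{\max}$ and $E^\#$ carries its operator system structure from \cite[Lemma 4.8]{Wer02}. The only hypothesis to check is that $\iota_E$ is completely gauge contractive with respect to the maximal gauge of $E^\#$, i.e.\ $\dist(\iota_E^{(n)}(x),-M_n(E^\#)_+)\le \dist(x,-M_n(E)_+)$ for all $x\in M_n(E)_{sa}$; this is immediate because $\iota_E$ is a completely isometric complete order embedding, so $\iota_E^{(n)}$ is isometric and carries $M_n(E)_+$ into $M_n(E^\#)_+$, whence the distance to the larger cone $-M_n(E^\#)_+$ can only decrease. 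Russell's Corollary 3.2 then produces a unital completely positive map $\wh{\iota}\colon\wh{E}\to E^\#$ with $\wh{\iota}(x,a)=\iota_E(x)+a\cdot(0_E,1)=(x,a)$.

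For the map $E^\#\to\wh{E}$, I would feed Werner's unitisation functor the canonical inclusion $j\colon E\to\wh{E}$. By \cite[Theorem 3.1]{Rus23} this inclusion is completely gauge isometric, hence — directly from the formulas defining the cones and norms of a matrix gauge space in \cite[Corollary 4.1]{Rus23} — a completely isometric complete order embedding, so in particular $j\in{\rm CCP}(E,\wh{E})$. Then \cite[Lemma 4.9]{Wer02} gives a unital completely positive map $j^\#\colon E^\#\to(\wh{E})^\#$, $(x,a)\mapsto(j(x),a)$. Since $\wh{E}$ is already unital, \cite[Lemma 4.9(b)]{Wer02} identifies $(\wh{E})^\#$ unitally completely order isomorphically with the $\ell^\infty$-direct sum $\wh{E}\oplus\bC$ via $(y,a)\mapsto(y+a\,e_{\wh{E}},a)$, where $e_{\wh{E}}=(0_E,1)$, and the coordinate projection $\wh{E}\oplus\bC\to\wh{E}$ is unital completely positive. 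Composing these three maps gives a unital completely positive $\Psi\colon E^\#\to\wh{E}$, and chasing the definitions yields $\Psi(x,a)=j(x)+a\,e_{\wh{E}}=(x,a)$, so $\Psi$ too is the identity on $E\oplus\bC$. Hence $\Psi$ and $\wh{\iota}$ are mutually inverse, $\wh{\iota}$ is a unital complete order isomorphism, and therefore $E^\#$ and $\wh{E}$ are isomorphic operator systems.

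I do not expect a serious obstacle: both unitisations are constructed to carry essentially the same universal property — extending a completely positive complete contraction from $E$ into a unital operator system to a unital completely positive map — and the argument merely realises this coincidence in the two directions. The one point requiring care is the bookkeeping: keeping track of all the canonical identifications so that $\wh{\iota}$ and $\Psi$ are literally the identity on $E\oplus\bC$ and hence visibly inverse to one another.
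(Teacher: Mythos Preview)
Your proof is correct and follows the same overall strategy as the paper: construct mutually inverse unital completely positive maps between $E^\#$ and $\wh{E}$, each being the identity on $E\oplus\bC$, by invoking the respective universal properties (Werner's Lemma~4.9 for one direction, Russell's Corollary~3.2 for the other).

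The one noteworthy difference is in the verification for the map $\wh{E}\to E^\#$. The paper proves that the inclusion $E\hookrightarrow E^\#$ is completely gauge \emph{isometric}, and does so by appealing to its Theorem~\ref{T:stateformulaE} (the distance formula $\dist(x,M_n(E)_+)=\sup_{\vphi}-\vphi(x)$) together with the known correspondence between states on $M_n(E)$ and states on $M_n(E^\#)$. You instead observe that Russell's Corollary~3.2 only requires gauge \emph{contractivity}, and that this is immediate from the complete order embedding property: the cone $-M_n(E^\#)_+$ contains the image of $-M_n(E)_+$, so the distance cannot increase. This is a genuine simplification---you bypass the distance formula entirely---though the paper's stronger gauge isometry statement may be of independent interest. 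Your route for the reverse map, passing explicitly through $(\wh{E})^\#\cong\wh{E}\oplus\bC$ and projecting, is slightly more detailed than the paper's but amounts to the same thing.
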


\begin{proof}
The inclusion map $E \hookrightarrow \wh{E}$ is completely gauge isometric by construction of $(\wh{E}, \wh{\nu}^{\max})$, and hence by \cite[Proposition 3.3]{Rus23} it is completely contractive completely positive. We thus obtain a unital completely positive map 
\[
\Phi \colon E^\# \to \wh{E}; (x,a) \mapsto (x,a).
\]

Conversely, the inclusion map $E \hookrightarrow E^\#$ is completely gauge isometric with respect to the maximal gauges of $E$ and $E^\#$. Indeed, by Theorem \ref{T:stateformulaE} we have 
\begin{align*}
\dist(x,M_n(E)_+)
&=
\sup_{\vphi \in {\rm CCP}(M_n(E), \bC)} -\vphi(x)
&=
\sup_{\vphi \in {\rm CCP}(M_n(E^\#), \bC)} -\vphi(x)
=
\dist(x, M_n(E^\#)_+),
\end{align*}
for all $x \in M_n(E)_{sa}$, $n\in \bN$.
Hence, \cite[Corollary 3.2]{Rus23} implies that the map 
\[
\Psi \colon \wh{E} \to E^\#; (x,a) \mapsto (x,a),
\]
is unital completely positive. 
Since $\Phi$ and $\Psi$ are mutual inverses, the proof is complete.
\end{proof}

\section{Positive generation} \label{S:pos gen}

In this section we investigate the embedding problem for selfadjoint operator spaces that are generated by their cones (at every matrix level).
Recall that a selfadjoint operator space $E$ is called \emph{approximately positively generated} if 
\[
E_{sa} = \ol{E_+-E_+}.
\]
In \cite[Example 8.1]{HKM23}, the authors provide an example of a selfadjoint operator space $E$ that is approximately positively generated, but for which $E_+ - E_+$ is not closed; hence $E$ is not positively generated.

We will relate this property to rigidity of the zero representation, a notion introduced by Salomon \cite[Definition 3.1]{Sal17} in the broader context of generating sets.
Let $\G \subseteq \ca(\G)$ be a generating set. 
The set $\G$ is said to be \emph{rigid at zero} if for any sequence $(\vphi_n)_n \subseteq {\rm CCP}(\ca(\G), \bC)$ we have:
\[
\lim_n \vphi_n(g) = 0 \foral g \in \G
\; \Longrightarrow \;
\lim_n \vphi_n(c) = 0 \foral c \in \ca(\G).
\]
In \cite[Theorem 3.3]{Sal17} it is shown that $\G$ is rigid at zero if and only if it is \emph{separating} for $\ca(\G)$, i.e., if for any $\vphi \in {\rm CCP}(\ca(\G), \bC)$ we have
\[
\vphi(g) = 0 \foral g \in \G
\; \Longrightarrow \;
\vphi(c) = 0 \foral c \in \ca(\G).
\]

Let now $E \subseteq \ca(E)$ be a selfadjoint operator space.
A map $\phi \colon E \to \B(K)$ is called \emph{maximal} if $\phi$ is a direct summand of any completely contractive completely positive dilation.
A map $\phi \colon E \to \B(K)$ is said to have \emph{the unique extension property} if it has a unique completely contractive completely positive extension $\wt{\phi} \colon \ca(E) \to \B(K)$ that is a $*$-representation.

As shown in \cite{Sal17}, the condition of rigidity at zero links with the study of hyperrigidity of non-unital generating sets, see also \cite{DKP25}. 
Moreover, in \cite[Theorem 3.3]{Sal17} it is proved that it is equivalent to the zero representation having the unique extension property. 
Combining the results of \cite{HKM23, Kim24, Sal17} we obtain the following.

\begin{proposition} \label{P:sepequiv} \cite[Proposition 8.2]{HKM23}, \cite[Theorem A]{Kim24}, \cite[Theorem 3.3]{Sal17}
Let $E \subseteq \ca(E)$ be a selfadjoint operator space and let $\de_0\colon E \to \bC$ denote the zero functional. 
Consider the following statements:
\begin{enumerate}
\item $E$ is approximately positively generated.
\item $E_+$ separates ${\rm CCP}(E, \bC)$.
\item $\de_0$ is maximal.
\item $\de_0$ satisfies the unique extension property.
\item $E$ is rigid at zero.
\item $E$ is separating for $\ca(E)$.
\end{enumerate}
Then the following implications hold:
\[
[\textup{(i) $\Leftrightarrow$ (ii) $\Leftrightarrow$ (iii) $\Rightarrow$ (iv) $\Leftrightarrow$ (v) $\Leftrightarrow$ (vi)}].
\]

If in addition the inclusion $E \subseteq \ca(E)$ is an embedding, then all items above are equivalent.
\end{proposition}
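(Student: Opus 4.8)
The plan is to prove the two clusters $[\textup{(i)}\Leftrightarrow\textup{(ii)}\Leftrightarrow\textup{(iii)}]$ and $[\textup{(iv)}\Leftrightarrow\textup{(v)}\Leftrightarrow\textup{(vi)}]$ separately, then link them by $\textup{(i)}\Rightarrow\textup{(iv)}$, and finally, under the embedding hypothesis, close the cycle by $\textup{(iv)}\Rightarrow\textup{(iii)}$.

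For the first cluster, $\textup{(i)}\Rightarrow\textup{(ii)}$ is immediate: $E_+-E_+$ is a real-linear subspace of $E_{sa}$, so a bounded selfadjoint functional vanishing on $E_+$ vanishes on $\ol{E_+-E_+}=E_{sa}$ and hence on $E=E_{sa}+iE_{sa}$; thus two elements of ${\rm CCP}(E,\bC)$ agreeing on $E_+$ must coincide. For $\textup{(ii)}\Rightarrow\textup{(i)}$ I would argue contrapositively: if $\ol{E_+-E_+}\subsetneq E_{sa}$, then by Hahn--Banach there is a nonzero selfadjoint contraction $f\colon E\to\bC$ vanishing on $\ol{E_+-E_+}$; since $f(E_+)=\{0\}\subseteq\bC_+$, it is a contractive positive functional, hence completely contractive and completely positive by the automatic complete positivity recalled in Section~\ref{S:prelim}, so $f\in{\rm CCP}(E,\bC)$ agrees with $\de_0$ on $E_+$ but is not equal to it. The implication $\textup{(i)}\Rightarrow\textup{(iii)}$ is also direct: given a completely contractive completely positive dilation $\psi$ of $\de_0$ on $\bC\oplus K'$, positivity of $\psi(p)$ for $p\in E_+$ together with the vanishing of its $(1,1)$-corner forces the off-diagonal corners of $\psi$, which are bounded and linear in $x$, to vanish on $E_+$ and hence on all of $E$ by $\textup{(i)}$; therefore $\psi=\de_0\oplus\psi'$, so $\de_0$ is maximal. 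The remaining implication $\textup{(iii)}\Rightarrow\textup{(i)}$ is the substantive input from the literature, namely \cite[Proposition 8.2]{HKM23} (see also \cite[Theorem A]{Kim24}).

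For the second cluster, $\textup{(iv)}\Leftrightarrow\textup{(vi)}$ is essentially a tautology: the completely contractive completely positive extensions of $\de_0$ to $\ca(E)$ are exactly the members of ${\rm CCP}(\ca(E),\bC)$ that vanish on $E$, so uniqueness of such an extension is the statement that this set equals $\{0\}$, which is $\textup{(vi)}$, and the zero functional is a $*$-representation. The equivalence $\textup{(v)}\Leftrightarrow\textup{(vi)}$ is \cite[Theorem 3.3]{Sal17}. To connect the clusters I would prove $\textup{(i)}\Rightarrow\textup{(iv)}$ directly: given $\vphi\in{\rm CCP}(\ca(E),\bC)$ with $\vphi|_E=0$, take a Stinespring representation $\vphi(c)=\sca{\pi(c)\xi,\xi}$; for $p\in E_+$ we get $0=\sca{\pi(p)\xi,\xi}=\|\pi(p)^{1/2}\xi\|^2$, so $\pi(p)\xi=0$, and by $\textup{(i)}$ and continuity $\pi(x)\xi=0$ for all $x\in E$; since $E$ generates $\ca(E)$, this forces $\pi(c)\xi=0$ for every $c$, whence $\vphi=0$ (which, incidentally, re-proves $\textup{(i)}\Rightarrow\textup{(vi)}$). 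At this point $[\textup{(i)}\Leftrightarrow\textup{(ii)}\Leftrightarrow\textup{(iii)}\Rightarrow\textup{(iv)}\Leftrightarrow\textup{(v)}\Leftrightarrow\textup{(vi)}]$ is established.

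Finally, assuming $E\subseteq\ca(E)$ is an embedding, I would close the cycle by showing $\textup{(iv)}\Rightarrow\textup{(iii)}$, passing to Werner's unitisation. The embedding hypothesis makes $E^\#\subseteq\ca(E)^\#$ a unital inclusion of operator systems with $\ca(E)^\#$ a C*-cover of $E^\#$ (via Theorem~\ref{T:equivemb} and Corollary~\ref{C:C*-unit}), and the unitisation correspondence $\psi\mapsto\psi^\#$ of \cite[Lemma~4.9]{Wer02} identifies completely contractive completely positive dilations, resp.\ $\ca(E)$-extensions, of $\de_0$ with unital completely positive dilations, resp.\ $\ca(E)^\#$-extensions, of the state $\de_0^\#\colon E^\#\to\bC$, $(x,a)\mapsto a$, respecting direct summands and carrying $*$-representations to $*$-representations. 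Hence $\de_0$ is maximal iff $\de_0^\#$ is maximal, and $\de_0$ has the unique extension property with respect to $\ca(E)$ iff $\de_0^\#$ has it with respect to $\ca(E)^\#$, equivalently with respect to $\cenv(E^\#)$ since $\ca(E)^\#$ is a C*-cover. For the unital operator system $E^\#$, maximality of $\de_0^\#$ is equivalent to its unique extension property by the classical dilation dichotomy (see \cite{Arv11} and \cite[Theorem~A]{Kim24}), which yields $\textup{(iv)}\Rightarrow\textup{(iii)}$ and hence the equivalence of all six items. I expect the main obstacle to be exactly this last step: transferring maximality and the unique extension property through the unitisation is routine bookkeeping, but recognising $\ca(E)^\#$ as a genuine C*-cover of $E^\#$ uses the embedding hypothesis essentially, and it is precisely the failure of this identification in general that leaves only $\textup{(iii)}\Rightarrow\textup{(iv)}$, and not its converse, available without the hypothesis.
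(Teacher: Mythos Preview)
Your argument is correct, and the overall architecture matches the paper's: the two clusters are handled by the cited sources, they are linked by the forward implication, and the embedding hypothesis is used only to close the cycle via $\textup{(iv)}\Rightarrow\textup{(iii)}$. The differences are in execution rather than strategy.

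For the linking step the paper proves $\textup{(iii)}\Rightarrow\textup{(iv)}$ rather than your $\textup{(i)}\Rightarrow\textup{(iv)}$: given a competing extension $\wt{\de}_0$, it takes a Stinespring dilation $\pi$ of $\wt{\de}_0$, invokes maximality to make $\de_0$ a direct summand of $\pi|_E$, and then uses multiplicativity of $\pi$ to propagate the block-diagonality to all of $\ca(E)$, forcing the $(1,1)$ corner to vanish. Your vector-state argument via $\pi(p)^{1/2}\xi=0$ is a clean alternative that bypasses maximality entirely and uses approximate positive generation directly.

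The genuine divergence is in the conditional $\textup{(iv)}\Rightarrow\textup{(iii)}$. The paper stays at the level of $E$: given a completely contractive completely positive dilation $\psi$ of $\de_0$, it uses Theorem~\ref{T:arvemb} (which is exactly where the embedding hypothesis enters) to extend $\psi$ to $\wt{\psi}\colon\ca(E)\to\B(K)$, applies the unique extension property to the compressed $(1,1)$ corner of $\wt{\psi}$ to get zero, and then reads off the vanishing of the off-diagonal blocks from positivity on $\ca(E)_+$. Your route through $E^\#\subseteq\ca(E)^\#$ and the classical unital equivalence of maximality and the unique extension property is legitimate, but note that when one unpacks that classical equivalence it is precisely the same Arveson-extension-plus-compression manoeuvre, now performed on $E^\#$ rather than on $E$. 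So your approach trades the single appeal to Theorem~\ref{T:arvemb} for the bookkeeping of transporting dilations and extensions across the unitisation, together with a black-box citation. The paper's version is shorter and keeps the argument self-contained within the results already established; yours makes more transparent why the embedding hypothesis is exactly the condition needed, since it is what promotes $E^\#\subseteq\ca(E)^\#$ to a genuine unital operator system inclusion.
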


\begin{proof}
Equivalence [(i) $\Leftrightarrow$ (ii)] is \cite[Proposition 8.2]{HKM23}. 
Equivalence [(i) $\Leftrightarrow$ (iii)] is \cite[Theorem A]{Kim24}. 
Equivalences [(iv) $\Leftrightarrow$ (v) $\Leftrightarrow$ (vi)] is \cite[Theorem 3.3]{Sal17}. 
The implication [(iii) $\Rightarrow$ (iv)] and the conditional implication [(iv) $\Rightarrow$ (iii)] follow by standard arguments.

In more detail, suppose that item (iii) holds.
First note that $\de_0$ has an extension to the zero representation on $\ca(E)$.
Let $\wt{\de}_0 \colon \ca(E) \to \bC$ be another contractive positive extension of $\de_0$.
Then passing to the unitisation and applying Stinespring's Theorem (see for example \cite[Proposition 2.7]{DKP25}) we can find a $*$-representation $\pi \colon \ca(E) \to \B(K)$ that dilates $\wt{\de}_0$.
By maximality we have that $\de_0$ is a direct summand of $\pi|_E$.
Since $\pi$ is a $*$-representation we see that its (1,1) corner $\wt{\de}_0$ is zero on $\ca(E)$, showing that $\de_0$ has the unique extension property.

Finally assume that $E \subseteq \ca(E)$ is an embedding.
We will show that in this case item (iv) implies item (iii).
Let $\psi \colon E \to \B(K)$ be a completely contractive completely positive dilation of $\de_0$.
As the inclusion $E \subseteq \ca(E)$ is assumed to be an embedding, by Theorem \ref{T:arvemb} there is a completely contractive completely positive extension $\wt{\psi} \colon \ca(E) \to \B(K)$ of $\psi$.
Since $P_\bC \wt{\psi} |_{\bC}$ is a contractive positive extension of $\de_0$, by the unique extension property we get $P_\bC \wt{\psi} |_{\bC} = 0$.
Since $\ca(E)$ is generated by its positive elements we have then that the off-diagonal corners of $\wt{\psi}(\ca(E))$ are zero, and hence so is for $\psi(E)$.
Thus $\de_0$ is a direct summand of $\psi$ as required.
\end{proof}

We next show that separation at the first level is separation at every level.

\begin{proposition}\label{P:complsep}
Let $E\subseteq \ca(E)$ be selfadjoint operator space. The following are equivalent:
\begin{enumerate}
\item $E$ is separating for $\ca(E)$.
\item $M_n(E)$ is separating for $M_n(\ca(E))$ for all $n\in \bN$.
\end{enumerate}
\end{proposition}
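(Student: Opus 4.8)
The plan is to reduce the equivalence to a single matrix level by means of Arveson's correspondence. The implication [(ii) $\Rightarrow$ (i)] requires no argument, since (i) is literally the case $n=1$ of (ii) (recall $M_1(E)=E$ and $M_1(\ca(E))=\ca(E)$). So all the work lies in [(i) $\Rightarrow$ (ii)], and I would first record the standard fact that $\ca(M_n(E))=M_n(\ca(E))$ inside $M_n(\B(H))$, so that the phrase ``$M_n(E)$ is separating for $M_n(\ca(E))$'' is meaningful: the C*-algebra generated by the elementary tensors $x\otimes E_{ij}$ ($x\in E$) contains all products $x_1\cdots x_m\otimes E_{ij}$ and their sums, hence after closure every $c\otimes E_{ij}$ with $c\in\ca(E)$, hence all of $M_n(\ca(E))$.

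For [(i) $\Rightarrow$ (ii)] I would fix $n$ and take $\Phi\in{\rm CCP}(M_n(\ca(E)),\bC)$ with $\Phi|_{M_n(E)}=0$, aiming to show $\Phi=0$. Applying Arveson's correspondence (Remark \ref{R:Arv_cor}) to the C*-algebra $\ca(E)$, I pass to the completely positive map $\phi=\phi_\Phi\colon\ca(E)\to M_n(\bC)$ with $\phi(c)=(\Phi(c\otimes E_{ij}))_{i,j}$. Since $c\otimes E_{ij}\in M_n(E)$ whenever $c\in E$, the hypothesis on $\Phi$ gives at once $\phi|_E=0$. The crux is then to upgrade $\phi|_E=0$ to $\phi=0$ on all of $\ca(E)$, and for this I would \emph{not} invoke dilation theory: I would instead observe that $\psi:=\operatorname{tr}\circ\phi$ is a positive functional on $\ca(E)$ vanishing on $E$, so after normalising it lies in ${\rm CCP}(\ca(E),\bC)$ and, by hypothesis (i), must vanish on all of $\ca(E)$. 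Consequently, for $c\in\ca(E)_+$ the positive matrix $\phi(c)$ has zero trace, so $\phi(c)=0$; as $\ca(E)$ is spanned by its positive elements, $\phi=0$. I would then close the loop by the inverse direction of Arveson's correspondence, $\Phi((x_{ij}))=\sum_{i,j}\sca{\phi(x_{ij})e_j,e_i}$, which now reads $\Phi=0$.

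The steps are essentially formal; the two points I would check with some care are (a) that the correspondence of Remark \ref{R:Arv_cor} is genuinely involutive on the classes in play, so that $\Phi$ really is recovered from $\phi_\Phi$, and (b) that $\phi_\Phi$ is completely positive with the stated entrywise formula when $\Phi$ is a positive functional on $M_n(\ca(E))$. I do not anticipate a real obstacle: the hypothesis is used only through the single scalar-level separation applied to $\operatorname{tr}\circ\phi$, and the transfer to and from scalar functionals is purely bookkeeping.
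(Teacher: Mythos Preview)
Your proof is correct and rests on the same decomposition as the paper's: both pass from the functional $\Phi$ on $M_n(\ca(E))$ to the functionals $c\mapsto\Phi(c\otimes E_{ij})$ on $\ca(E)$, which is exactly Arveson's correspondence (the paper cites this as \cite[Lemma 2.7]{Wer02}). The difference is in how you conclude. The paper shows each diagonal functional $\varphi_{ii}$ vanishes on $E$, applies (i) to each separately, and then uses an approximate unit $(e_\lambda)_\lambda$ of $\ca(E)$ to compute $\|\Phi\|=\lim_\lambda\Phi(e_\lambda\otimes I_n)=\lim_\lambda\sum_i\varphi_{ii}(e_\lambda)=0$. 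You instead bundle the diagonals into the single positive functional $\operatorname{tr}\circ\phi$, apply (i) once, and then use the elementary fact that a positive matrix with zero trace is zero to kill $\phi$ on $\ca(E)_+$ and hence on all of $\ca(E)$. Your route is slightly more self-contained in that it avoids approximate units; the paper's route is marginally more direct in that it does not need to invert the correspondence at the end. Both are short and the underlying idea is the same.
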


\begin{proof}
The implication [(ii) $\Rightarrow$ (i)] is trivial. 
For [(i) $\Rightarrow$ (ii)], let $\vphi\colon M_n(\ca(E)) \to \bC$ be a contractive positive functional such that $\vphi|_{M_n(E)} = 0$.
As in \cite[Lemma 2.7]{Wer02} there exist $\vphi_{ij} \colon E \to \bC$ such that $\vphi_{ii}$ are in ${\rm CCP}(\ca(E), \bC)$ and 
\[
\vphi(c)=\sum_{i,j} \vphi_{ij}(c_{ij}) \text{ for } c=(c_{ij})\in M_n(\ca(E)).
\]
Since $\vphi|_{M_n(E)}=0$ we have $\vphi_{ii}|_E=0$ and hence $\vphi_{ii}=0$ for every $i=1,\dots,n$. 
Let $(e_\la)_\la$ be an approximate unit for $\ca(E)$, and thus $(e_\la \otimes I_n)_\la$ is an approximate unit for $M_n(\ca(E))$. 
Since $\vphi \in {\rm CCP}(M_n(\ca(E)), \bC)$ we obtain 
\[
\|\vphi\| = \lim_\la \vphi(e_\la \otimes I_n) = \lim_\la \sum_i \vphi_{ii}(e_\la) = 0,
\]
and thus $M_n(E)$ is separating for $\ca(M_n(E))$, as required.
\end{proof}

In \cite[Proposition 8.4]{HKM23} it is shown that if $E$ is \emph{positively generated}, i.e., $E_{sa}=E_+-E_+$, then $M_n(E)$ is positively generated for every $n\in\bN$. 
Using Propositions \ref{P:sepequiv} and \ref{P:complsep} we can show that the same result holds for approximate positive generation.

\begin{proposition}\label{P:complappos}
Let $E$ be a selfadjoint operator space. 
The following are equivalent:
\begin{enumerate}
\item $E$ is approximately positively generated.
\item $M_n(E)$ is approximately positively generated for all $n\in \bN$.
\end{enumerate}
\end{proposition}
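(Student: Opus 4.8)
The plan is to route the equivalence through the separation criterion of Proposition~\ref{P:sepequiv} together with the level-wise stability of separation in Proposition~\ref{P:complsep}. The implication [(ii) $\Rightarrow$ (i)] is immediate by taking $n=1$, so the content is in [(i) $\Rightarrow$ (ii)]. The obvious difficulty is that approximate positive generation is an \emph{intrinsic} condition on $E$, whereas separation for $\ca(E)$ is phrased in terms of the generated C*-algebra; so I first need a C*-cover through which to make the argument, and the natural choice is a C*-cover where the inclusion $E \subseteq \ca(E)$ is an embedding. Such a cover always exists: realise $E$ inside $\cenv(E^\#)$ and let $\ca(E)$ be the C*-algebra it generates there, which is an embedding by the discussion following Remark~\ref{R:covers}. (Alternatively one passes to $\cenv(E)$ directly.) Since approximate positive generation of $E$ and of $M_n(E)$ are properties of the matrix-ordered operator space $E$ alone, independent of the ambient C*-algebra, it is harmless to fix this particular concrete realisation.

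With an embedding $E \subseteq \ca(E)$ in hand, I would argue as follows. First, by Proposition~\ref{P:sepequiv}, since the inclusion $E \subseteq \ca(E)$ is an embedding, all six listed items are equivalent; in particular $E$ is approximately positively generated if and only if $E$ is separating for $\ca(E)$. Next, I claim the inclusion $M_n(E) \subseteq M_n(\ca(E))$ is also an embedding for every $n$. Indeed, $M_n(\ca(E)) = \ca(M_n(E))$, and by Theorem~\ref{T:equivemb} it suffices to check that every $\vphi \in {\rm CCP}(M_k(M_n(E)), \bC)$ extends to a map in ${\rm CCP}(M_k(M_n(\ca(E))), \bC)$ for every $k$; but $M_k(M_n(E)) \cong M_{kn}(E)$ and $M_k(M_n(\ca(E))) \cong M_{kn}(\ca(E))$ compatibly, so this extension property for $M_n(E) \subseteq M_n(\ca(E))$ at level $k$ is exactly the extension property for $E \subseteq \ca(E)$ at level $kn$, which holds because $E \subseteq \ca(E)$ is an embedding. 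Hence $M_n(E) \subseteq M_n(\ca(E))$ is an embedding.

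Now apply Proposition~\ref{P:sepequiv} again, this time to the selfadjoint operator space $M_n(E)$ inside $\ca(M_n(E)) = M_n(\ca(E))$: since this inclusion is an embedding, $M_n(E)$ is approximately positively generated if and only if $M_n(E)$ is separating for $M_n(\ca(E))$. So it remains to show: if $E$ is separating for $\ca(E)$, then $M_n(E)$ is separating for $M_n(\ca(E))$ for all $n$ — and this is precisely the implication [(i) $\Rightarrow$ (ii)] of Proposition~\ref{P:complsep}. Chaining the three equivalences,
\[
E \text{ appx.\ pos.\ gen.}
\iff E \text{ separating for } \ca(E)
\iff M_n(E) \text{ separating for } M_n(\ca(E)) \ \forall n
\iff M_n(E) \text{ appx.\ pos.\ gen.}\ \forall n,
\]
which is the desired statement.

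The step I expect to be the main obstacle is the passage to an embedding realisation: one must be careful that $E$ admits \emph{some} C*-cover with the embedding property (so that Proposition~\ref{P:sepequiv} applies in full strength) and that approximate positive generation is genuinely intrinsic, so that proving it in one realisation proves it in all. Everything else — the identification $M_{kn}(E) \cong M_k(M_n(E))$ and the corresponding identification at the C*-level, and the resulting transfer of the embedding property from $E$ to $M_n(E)$ — is routine bookkeeping via Theorem~\ref{T:equivemb}. One small point to record is that the matrix-cone and matrix-norm structure on $M_n(E)$, and hence the notions ``approximately positively generated'' and ``separating'', do not depend on the chosen concrete embedding of $E$ into $\B(H)$, since both are defined from the matrix order and matrix norm of $E$; this legitimises fixing the convenient realisation at the outset.
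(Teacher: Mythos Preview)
Your proposal is correct and follows essentially the same route as the paper: choose a C*-cover in which $E \subseteq \ca(E)$ is an embedding (via Remark~\ref{R:covers}), observe that $M_n(E) \subseteq M_n(\ca(E))$ is then also an embedding, and combine Proposition~\ref{P:sepequiv} with Proposition~\ref{P:complsep} to pass from approximate positive generation of $E$ to that of $M_n(E)$ through the separation property. Your write-up supplies more detail than the paper on why the embedding property passes to matrix levels (via the identification $M_k(M_n(E)) \cong M_{kn}(E)$ and Theorem~\ref{T:equivemb}), which the paper simply asserts, but the argument is the same.
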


\begin{proof}
The implication [(ii) $\Rightarrow$ (i)] is trivial. 
For [(i) $\Rightarrow$ (ii)], we may consider an inclusion $E\subseteq \ca(E)$ which is an embedding, use for example Remark \ref{R:covers}. 
Then for every $n\in \bN$ we have that the inclusion $M_n(E) \subseteq M_n(\ca(E))$ is an embedding as well. 
By Proposition \ref{P:complsep} we have that $M_n(E)$ is separating for $M_n(\ca(E))$, and hence Proposition \ref{P:sepequiv} yields that $M_n(E)$ is approximately positively generated.
\end{proof}

We will prove that approximate positive generation allows extensions of completely positive maps when the generated C*-algebra is unital. 
We will first need the following lemma.

\begin{lemma}\label{L:extendable_closed} Let $E \subseteq \ca(E)$ be a selfadjoint operator space. 
If $E$ is separating for $\ca(E)$ and $\ca(E)$ is unital, then the set 
\begin{equation} \label{eq:extendables}
\fK:=\big\{ \phi\in {\rm CB}(E,\B(K)) : \exists \,  \wt{\phi} \colon \ca(E) \to \B(K) \text{ completely positive such that } \wt{\phi}|_{E}=\phi \big\}
\end{equation}
of completely bounded completely positive extendable maps from $E$ to $\ca(E)$ is closed under the point-weak* topology.
\end{lemma}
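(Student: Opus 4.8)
The plan is to take a net $(\phi_i)_i$ in $\fK$ converging in the point-weak* topology to some $\phi\in{\rm CB}(E,\B(K))$ and to manufacture a completely positive $\wt\phi\colon\ca(E)\to\B(K)$ with $\wt\phi|_E=\phi$. The first, routine, step is to note that $\phi$ is completely positive: each $\phi_i$ is a restriction of a completely positive map on $\ca(E)$, hence completely positive (as $M_n(E)_+\subseteq M_n(\ca(E))_+$), and since $\phi_i^{(n)}(x)\to\phi^{(n)}(x)$ weak* for each $x\in M_n(E)_+$ while the positive cones of $M_n(\B(K))$ are weak*-closed, the limit $\phi$ is completely positive too; so $\phi$ is a genuine completely bounded completely positive map.

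The key idea is to obtain $\wt\phi$ as a point-weak* cluster point of completely positive extensions $\wt\phi_i\colon\ca(E)\to\B(K)$ of the $\phi_i$. Since $\ca(E)$ is unital, any completely positive map on it has $\|\,\cdot\,\|_{\rm cb}=\|\,\cdot\,(1)\|$, so once one can choose the $\wt\phi_i$ with $\sup_i\|\wt\phi_i(1)\|\le C$ for some finite $C$, the net $(\wt\phi_i)_i$ lies in the point-weak*-compact set of completely positive $\psi\colon\ca(E)\to\B(K)$ with $\|\psi(1)\|\le C$ (the usual BW-compactness argument, as in \cite[Theorem 7.4]{Pau02}); a convergent subnet then yields a completely positive $\wt\phi$ on $\ca(E)$ restricting to $\lim_i\phi_i=\phi$ on $E$. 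Thus everything reduces to the a priori estimate
\[
\sup_i\|\wt\phi_i(1)\|<\infty
\]
for a suitable choice of extensions, and this is where the hypotheses that $E$ separates $\ca(E)$ and that $\ca(E)$ is unital enter.

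To establish the bound I would first reduce to finite-dimensional $K$ by compressing along the net $(P_\alpha)_\alpha$ of finite-rank projections on $K$: the maps $P_\alpha\phi_i(\cdot)P_\alpha$ are again in $\fK$ (compress the extensions), converge point-weak* to $P_\alpha\phi(\cdot)P_\alpha$, and have $\|P_\alpha\phi(\cdot)P_\alpha\|_{\rm cb}\le\|\phi\|_{\rm cb}$; granting the finite-dimensional case one gets completely positive extensions $\wt{\phi_\alpha}\colon\ca(E)\to\B(P_\alpha K)$ of $P_\alpha\phi(\cdot)P_\alpha$, and—provided the bound on $\|\wt{\phi_\alpha}(1)\|$ depends only on $\|\phi\|_{\rm cb}$, which one arranges by a direct-sum argument—a point-weak* cluster point of $(\wt{\phi_\alpha})_\alpha$ restricts to $\lim_\alpha P_\alpha\phi(\cdot)P_\alpha=\phi$ on $E$. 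For finite-dimensional $K$ the bound is proved by contradiction: if it failed one could select completely positive $\rho_n\colon E\to\B(K)$ with $\|\rho_n\|_{\rm cb}\le R$ (fixed $R$), each admitting a completely positive extension to $\ca(E)$, but with minimal extension norm $t_n:=\min\{\|\sigma(1)\|:\sigma\colon\ca(E)\to\B(K)\text{ completely positive},\ \sigma|_E=\rho_n\}$—attained, by finite-dimensionality—tending to $\infty$. Normalising a minimiser, $\tau_n:=t_n^{-1}\sigma_n$ is completely positive on $\ca(E)$ with $\|\tau_n(1)\|=1$ and $\tau_n|_E=t_n^{-1}\rho_n\to 0$ in norm. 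Passing to a point-weak* cluster point $\tau$ of $(\tau_n)_n$ and using finite-dimensionality of $\B(K)$—so that $\|\tau_n(1)\|\to\|\tau(1)\|$—one obtains a completely positive $\tau\colon\ca(E)\to\B(K)$ with $\tau|_E=0$ and $\|\tau(1)\|=1$. Choosing a unit eigenvector $\xi$ of $\tau(1)\ge 0$ for the eigenvalue $1$, the functional $c\mapsto\langle\tau(c)\xi,\xi\rangle$ is a state on the unital C*-algebra $\ca(E)$ that vanishes on $E$; this contradicts that $E$ separates $\ca(E)$.

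I expect the a priori norm control on the extensions to be the main obstacle. Without the separation hypothesis it genuinely fails—a positive extension of a positive functional need not preserve the norm (Example \ref{E:nonemb})—and without reducing to a finite-dimensional $K$ the normalisation argument collapses, since in $\B(K)$ with $K$ infinite-dimensional the positive operators $\tau_n(1)$ may converge to $0$ weak* while retaining norm $1$, so that the limit loses all of its mass; it is finite-dimensionality that forces $\|\tau(1)\|=1$ and hence the contradiction. The remaining points—attainment of the minimal extension norm, weak*-closedness of the cones, point-weak* compactness of the relevant sets of completely positive maps, and the dimension-independence of the constant used in the compression step—are routine.
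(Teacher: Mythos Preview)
Your approach shares the paper's core idea---normalize the blowing-up extensions to manufacture a state on $\ca(E)$ that annihilates $E$, contradicting separation---but it is more circuitous and has one genuine gap.

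The gap is at the very start: you take an arbitrary point-weak* convergent net $(\phi_i)_i\subseteq\fK$, but such a net need not be bounded in cb-norm, so when you later write ``one could select completely positive $\rho_n$ with $\|\rho_n\|_{\rm cb}\le R$ (fixed $R$)'' there is nothing furnishing that $R$. Your finite-dimensional argument proves only the \emph{bound} (extendable maps of cb-norm $\le R$ admit extensions of controlled norm), not the \emph{closedness} you invoke when you write ``granting the finite-dimensional case''; the bound yields closedness only for bounded nets. The paper fills this gap at the outset by the Krein--\v{S}mulian theorem: $\fK$ is a convex cone in the dual Banach space ${\rm CB}(E,\B(K))$ (whose weak* topology is the point-weak* topology), so it suffices to show $\fK\cap\{\|\cdot\|_{\rm cb}\le 1\}$ is closed, and one may then assume $\|\phi_\la\|_{\rm cb}\le 1$ throughout.

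Once that reduction is in place, the paper's route is considerably shorter than yours: there is no need to compress to finite-dimensional $K$, no minimiser, and no ``direct-sum argument'' for dimension-independence. Given bounded $(\phi_\la)_\la$ with extensions $\wt\phi_\la$, if no subnet of $(\|\wt\phi_\la\|_{\rm cb})_\la$ is bounded, pass to a subnet with $\|\wt\phi_\la\|_{\rm cb}\to\infty$, choose states $\om_\la$ on $\B(K)$ with $\om_\la(\wt\phi_\la(1))=\|\wt\phi_\la(1)\|$, and set $f_\la:=\|\wt\phi_\la\|_{\rm cb}^{-1}\,\om_\la\circ\wt\phi_\la$. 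Each $f_\la$ is a state on the unital C*-algebra $\ca(E)$ and $|f_\la(x)|\le\|\wt\phi_\la\|_{\rm cb}^{-1}\|x\|\to 0$ for $x\in E$; any weak*-cluster point is then a state on $\ca(E)$ vanishing on $E$, the desired contradiction. The passage to states on $\B(K)$ is exactly what dissolves your worry that ``$\tau_n(1)$ may converge to $0$ weak* while retaining norm $1$'': the mass is tracked in the weak*-compact state space of $\ca(E)$ rather than in $\B(K)$, so nothing is lost in the limit and no finite-dimensionality is needed.
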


\begin{proof}
Since $ \fK$ is a convex cone, by the Krein--\v{S}mulian Theorem it suffices to show that  
\[
\fK \cap \{\phi \in {\rm CB}(E,\B(K)): \| \phi \|_{\rm cb}\leq 1 \}
\]
is point-weak* closed. 
Let $(\phi_{\la})_{\la\in \La} \subseteq \fK $ be a net in $\fK$ such that $\|\phi_{\la}\|_{\rm cb}\leq 1$ for all $\la$ and $\textup{pw*-}\lim \phi_{\la} = \phi$ for some completely bounded and completely positive map $\phi\colon E \to \B(K)$. 
For each $\la$ let $\wt{\phi}_\la\colon \ca(E) \to \B(K)$ be a completely positive extension of $\phi_\la$. 
We will prove that $(\wt{\phi}_{\la})_{\la}$ has a uniformly bounded subnet. 
By compactness of bounded sets in the point-weak* topology, this will yield a cluster point $\wt{\phi} \colon \ca(E) \to \B(K)$ which will be a completely positive extension of $\phi$, and the proof will be complete.

We assume towards a contradiction that the net $(\|\wt{\phi}_{\la}\|_{\rm cb})_{\la}$ does not have a uniformly bounded subnet.
That is, for every $M>0$ and every $\la_0\in \La$ there exists $ \la \geq \la_0$ such that $\|\wt{\phi}_{\la }\|_{\rm cb} >M $. 
By taking a subnet if necessary we may assume that  $ \| \wt{\phi}_{\la}\|_{\rm cb} \to \infty$.
Now for each $\la$, pick a  state $\om_{\la}\colon \B(K)\to \bC$ with $\om_{\la}(\wt{\phi}_{\la}(1))
=
\|\wt{\phi}_{\la}(1)\|
=
\|\wt{\phi}_{\la}\|_{\rm cb}$,
and set  
\[
f_{\la }:= \|\wt{\phi}_{\la}\|_{\rm cb}^{-1} \cdot (\om_{\la} \circ \wt{\phi}_{\la}). 
\]
Each  $ f_{\la}$ is a state on the unital $\ca(E)$, and hence by weak* compactness, there exists a state $f$ of $\ca(E)$ that is a cluster point for $(f_\la)_\la$. 
For $x \in E$ we have
\begin{align*}
| f_{\la}(x)| = \|\wt{\phi}_{\la}\|_{\rm cb}^{-1} \cdot |\om_{\la}(\wt{\phi}_{\la}(x))| = \|\wt{\phi}_{\la}\|_{\rm cb}^{-1} \cdot |\om_{\la}(\phi_{\la}(x))| \leq \|\wt{\phi}_{\la}\|_{\rm cb}^{-1} \cdot \|x\| \xrightarrow{\la} 0
\end{align*}
since $ \|\om_{\la}\| =1$ for all $\la$ and $\|\phi_{\la}\|_{\rm cb}\leq 1$. 
We thus obtain that $f|_{E}=0$, and since $E$ is separating for $\ca(E)$, we derive to the contradiction that $f = 0$.
\end{proof}

We now prove the extension theorem in this class.

\begin{theorem}\label{T:exten}
Let $E \subseteq \ca(E)$ be a selfadjoint operator space. 
If $E$ is separating for $\ca(E)$ and $\ca(E)$ is unital, then every completely bounded completely positive map $\phi\colon E \to \B(K)$ extends to a (completely bounded) completely positive map $\wt{\phi} \colon \ca(E) \to \B(K)$.
\end{theorem}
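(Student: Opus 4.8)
The plan is to combine Lemma~\ref{L:extendable_closed} with Arveson's correspondence, thereby reducing the statement to an extension result for bounded positive functionals.

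\emph{Reduction to matricial range.} Since $E$ is approximately positively generated, Proposition~\ref{P:sepequiv} gives that $E$ is separating for $\ca(E)$, so Lemma~\ref{L:extendable_closed} applies and the set $\fK$ of completely bounded completely positive maps $E \to \B(K)$ admitting a completely positive extension to $\ca(E)$ is point-weak* closed. Given a completely bounded completely positive $\phi \colon E \to \B(K)$, let $(\phi_{\F})_{\F}$ be the net of compressions $\phi_{\F}(x) = P_{\F}\phi(x)P_{\F}$ along the finite-dimensional subspaces $\F \subseteq K$, exactly as in the proof of Theorem~\ref{T:arvemb}; each $\phi_{\F}$ is completely bounded and completely positive with range in some $M_{k}(\bC)$, and $\phi_{\F} \to \phi$ in the point-weak* topology. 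Hence it suffices to treat maps into matrix algebras: once every completely bounded completely positive map from $E$ into some $M_m(\bC)$ admits a completely positive extension to $\ca(E)$, each $\phi_{\F}$ admits one, so $\phi_{\F} \in \fK$, and $\phi = \lim_{\F}\phi_{\F} \in \fK$.

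\emph{Passage to positive functionals.} By Arveson's correspondence (Remark~\ref{R:Arv_cor}), a completely bounded completely positive $\phi \colon E \to M_m(\bC)$ corresponds to a bounded positive functional $s_{\phi} \colon M_m(E) \to \bC$, and a completely positive extension $\wt{\phi} \colon \ca(E) \to M_m(\bC)$ of $\phi$ corresponds precisely to a positive extension of $s_\phi$ to $M_m(\ca(E))$. By Proposition~\ref{P:complappos}, $M_m(E)$ is approximately positively generated, and $\ca(M_m(E)) = M_m(\ca(E))$ is unital. So the theorem reduces to: \emph{if $V$ is an approximately positively generated selfadjoint operator space with $\ca(V)$ unital, then every bounded positive functional $\varphi \colon V \to \bC$ extends to a positive functional on $\ca(V)$.} To prove this, pass to the unital operator system $\S := V + \bC 1 \subseteq \ca(V)$; since every positive functional on a unital operator system extends to a positive functional on an ambient C*-algebra (Hahn--Banach, using that positivity there is detected by the norm and the value at the unit), it is enough to extend $\varphi$ to a positive functional on $\S$. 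If $1 \in V$ this is the classical Arveson extension theorem; otherwise every element of $\S$ is uniquely $v + t1$ with $v \in V$, $t \in \bC$, and setting $\wt{\varphi}(v + t1) := \varphi(v) + td$ defines a positive functional on $\S$ for a suitable real $d$ exactly when
\[
\sup\{\, -\varphi(v) : v \in V_{sa},\ v + 1 \ge 0 \text{ in } \ca(V)\,\} < \infty
\]
(the complementary upper constraint on $d$ is then automatic, since $u + v \ge 0$ whenever $u \ge 1$ and $v \ge -1$, so $\varphi(u) \ge -\varphi(v)$).

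The main obstacle is precisely the finiteness of that supremum, and this is where approximate positive generation and unitality of $\ca(V)$ must be used together. One only has the soft bound $-\varphi(v) \le \|\varphi\|\,\dist(v, V_+)$ from Theorem~\ref{T:stateformulaE}, and $\dist(\,\cdot\,, V_+)$ is in general unbounded on $\{v : v + 1 \ge 0\}$, so this alone is not enough; instead one must use that $\ca(V) = \ca(V_+)$ is unital to upgrade approximate positive generation to a uniform statement near the negative boundary. Concretely, a natural route is to show that there is a constant $C$ so that every $v \in V_{sa}$ with $v \ge -1$ admits a decomposition $v = p - r$ with $p, r \in V_+$ and $\|r\| \le C$, whence $-\varphi(v) \le \varphi(r) \le C\|\varphi\|$. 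I expect establishing this uniform control to be the crux of the argument; the reductions above are routine, and Example~\ref{E:nonemb} already shows that no such argument can preserve the norm of the functional, in accordance with the remark that the extension need not have the same cb-norm.
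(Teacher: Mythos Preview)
Your reductions (to matrix algebras via Lemma~\ref{L:extendable_closed} and compressions, then to bounded positive functionals via Arveson's correspondence and Proposition~\ref{P:complappos}) are correct and match the paper exactly. The gap is in the final step: you identify the obstacle as bounding
\[
\sup\{\, -\varphi(v) : v \in V_{sa},\ v + 1 \ge 0 \,\},
\]
propose that this should follow from a uniform decomposition $v = p - r$ with $p, r \in V_+$ and $\|r\| \le C$, and then stop, saying you ``expect'' this to work. You have not proved such a decomposition exists, and there is no obvious reason it should: approximate positive generation does not even give exact decompositions, and Example~\ref{E:nonemb} shows that quantitative control on positive decompositions can fail badly even in this setting. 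So as written this is an incomplete proof with the main difficulty unaddressed.

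The paper's route to the functional case is entirely different and avoids any decomposition. It applies Lemma~\ref{L:extendable_closed} \emph{a second time}, now with $K = \bC$, so that the set $\fK_{\bC}$ of positive functionals on $V$ admitting a positive extension to $\ca(V)$ is a weak*-closed convex subset of $V^d$. If $\varphi \notin \fK_{\bC}$, Hahn--Banach separation gives a selfadjoint $x \in V$ with $\varphi(x) < 0 \le \psi(x)$ for every $\psi \in \fK_{\bC}$; in particular $\psi(x) \ge 0$ for every restriction of a positive functional on $\ca(V)$, forcing $x \ge 0$ in $\ca(V)$, hence $x \in V_+$, contradicting $\varphi(x) < 0$. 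You already invoked Lemma~\ref{L:extendable_closed} for the final reduction from $\B(K)$ to matrix algebras; the point you missed is that the same lemma, applied at the scalar level together with separation, dispatches the functional case directly.
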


\begin{proof}
First let $\vphi \colon E \to \bC$ be a bounded positive functional.
By Lemma \ref{L:extendable_closed}, the set
\[
\fK_{\bC} :=\{\vphi \in E^d : \exists \, \wt{\vphi} \colon \ca(E) \to \bC \text{ positive such that } \wt{\vphi}|_{E} = \vphi\},
\]
is a weak*-closed convex subset of $E^d$. 
We claim that $\vphi \in \fK_{\bC}$. 
Indeed, if not, without loss of generality by the Hahn--Banach Separation Theorem there would have to be some selfadjoint element $x\in E$ for which
\[
\vphi(x)< 0 \leq \inf_{\vphi' \in \fK_{\bC}} \vphi'(x).
\]
Since the restriction to $E$ of every positive functional of $\ca(E)$ lies in $\fK_{\bC}$, we obtain that $x\geq 0$, which contradicts to the fact that $\vphi$ is positive.

Next let $\phi \colon E \to M_n(\bC)$ be a completely bounded completely positive map.
By Arveson's correspondence (see Remark \ref{R:Arv_cor}) we obtain a bounded positive functional $s_\phi \colon M_n(E) \to \bC$. 
By Proposition \ref{P:complappos} we have that $M_n(E)$ is also approximately positively generated, and thus we can extend  $s_\phi$ to a positive functional on $M_n(\ca(E))$.
Using Arveson's correspondence once more, we obtain the desired completely bounded completely positive extension of $\phi \colon E \to M_n(\bC)$.

Finally, let $\phi \colon E \to \B(K)$ be a completely bounded completely positive map.
Then $\phi$ is the point-weak* limit  of the net of compressions $(\phi_{\F})_{\F}$ of $\phi$ onto the finite dimensional subspaces of $K$ indexed by inclusion, i.e., $\phi_{\F}= P_{\F} \phi P_{\F} $ where $P_{\F}$ is the projection onto $\F \subseteq K$. 
Each of the maps $\phi_{\F}$ belongs to the set $\fK$ of ``extendable" completely positive maps in (\ref{eq:extendables}) since $\B(P_{\F} K)\cong M_{k}(\bC)$ where $k= \rank P_{\F}$. 
By Lemma \ref{L:extendable_closed}, the set $\fK$ is closed under the point-weak* topology, thus showing that $\phi \in \fK$, as required.
\end{proof}

By Proposition \ref{P:sepequiv} and Theorem \ref{T:exten} we obtain the following corollary.

\begin{corollary}\label{C:exten}
If $E \subseteq \ca(E)$ is an approximately positively generated selfadjoint operator space and $\ca(E)$ is unital, then every completely bounded completely positive map $\phi\colon E \to \B(K)$ extends to a (completely bounded) completely positive map $\wt{\phi} \colon \ca(E) \to \B(K)$.
\end{corollary}

We give two examples of non-separating selfadjoint operator spaces, that generate a unital C*-algebra, but attain positive functionals that do not admit positive extensions.
In Example \ref{E:tables} we show that the conditions of approximate positive generation of $E$, unitality of $\ca(E)$, and the inclusion $E \subseteq \ca(E)$ being an embedding are independent.

\begin{example} \label{E:nopositive_ext1}
Let $\bT$ be the unit circle and consider the selfadjoint operator space 
\[
F := \{f \in {\rm C}(\bT): \int_{\bT} fdm=0 \}=\{f \in {\rm C}(\bT): \wh{f}(0)=0\},
\] 
where $m$ is the normalised Lebesgue measure and $\wh{f}(0)$ is the zero-th Fourier coefficient.
The coordinate function $z$ is in $F$, and thus we have $\ca(F)={\rm C}(\bT)$.
Moreover, $F$ is not separating for ${\rm C}(\bT)$ since the unital positive map 
\[
{\rm C}(\bT) \to \bC; f \mapsto \int_{\bT}f dm
\]
annihilates $F$. 
Note also that since any positive and non-zero continuous function has a non-zero integral, we have $F_+=\{0\}$. 

Consider the ``length" functional $\vphi\colon F \to \bC$ defined by
\[
 \vphi(f) = \int_{\bT} fd(\de_{1}-\de_{-1}) = f(1)-f(-1).
\]
Then $\vphi$ is bounded and it is trivially positive since $ F_{+}=\{0\}$. 
We claim that $\vphi$ does not extend to a positive functional on ${\rm C}(\bT)$, and thus the inclusion $F \subseteq {\rm C}(\bT)$ is not an embedding.

Suppose towards a contradiction that there exists a positive functional $\wt{\vphi} \colon {\rm C}(\bT) \to \bC$ extending $\vphi$.
Then $\wt{\vphi}$ is induced by a positive measure $\mu$ by the Riesz--Markov--Kakutani Theorem, and in particular
\[
\int_{\bT}f d\mu = \wt{\vphi}(f)  = \vphi(f)  = \int_{\bT} fd(\de_{1}-\de_{-1}) \foral f\in F.
\]
Let $g\in {\rm C}(\bT)$ and set $f:= g - (\int_{\bT} gdm) \cdot 1$; then $f \in F$ and thus
\[
\int_{\bT} g d\mu - \mu(\bT) \int_{\bT} g dm
=
\int_{\bT} f d\mu
=
\int_{\bT} fd(\de_{1}-\de_{-1}) 
=
\int_{\bT} g d(\de_1 - \de_{-1}).
\]
Again by the Riesz--Markov--Kakutani Theorem, we get
\[
\mu - \mu(\bT) \cdot m = \de_{1} - \de_{-1},
\]
and thus $\mu(\{-1\}) = -1 <0$.
This is a contradiction as $\mu$ is positive.
\end{example}

The problem in Example \ref{E:nopositive_ext1} is not the triviality of the cones as the following example indicates.

\begin{example}\label{E:nopositive_ext2}
Let $E \subseteq M_2(\bC)$ be the selfadjoint operator space generated by $\{E_{11}, E_{12}\}$.
Then $\ca(E) = M_2(\bC)$ as $E_{11}, E_{12}, E_{21} \in E$ and $E_{22} = E_{21} E_{12}$. The compression to the (2,2) entry is a state on $\ca(E)$ that annihilates $E$, and hence $E$ is not separating for $\ca(E)$.
Note also that $E$ is not approximately positively generated, as $E_{+} = \bR_{+} \cdot E_{11}$.
Let $\vphi \colon E \to \bC$ be the functional defined by
\[
\vphi(x) = {\rm Tr}( \begin{bmatrix} 0 & 1 \\ -1 & 0 \end{bmatrix} x)
\foral x \in E.
\]
This is a bounded functional with $\vphi(E_{11}) = 0$ and $\vphi(E_{12}) = -1$.
Hence it is (completely) positive on $E$.
We claim that $\vphi$ has no (completely) positive extension on $\ca(E) = M_2(\bC)$.

Suppose towards a contradiction that $\vphi$ extends to a positive functional $\wt{\vphi} \colon M_2(\bC) \to \bC$, i.e.,
\[
\wt{\vphi}(\cdot) = {\rm Tr}(\varrho \cdot)
\text{ for some }
\varrho \in M_2(\bC)_+.
\]
We then have $\varrho_{11} = {\rm Tr}(\varrho E_{11}) = \vphi(E_{11}) = 0$, and therefore $\varrho_{12} = \varrho_{21} = 0$ as well.
With that in hand we have
\[
-1 = \vphi(E_{12}) = {\rm Tr}(\varrho E_{22}) = 0,
\]
which is a contradiction.
\end{example}

In Example \ref{E:nonemb} we further show that approximate positive generation of $E$ on its own still does not guarantee that the inclusion in $\ca(E)$ is an embedding.
The problem is that positive extensions (which exist by Corollary \ref{C:exten}) may not preserve the norm.

On the other hand, a class of selfadjoint operator spaces where positive extensions to $\ca(E)$ with preservation of norm exist appears in \cite{DKP25}.

\begin{example}
Let $E \subseteq \ca(E)$ be a selfadjoint operator space. 
Suppose there is a C*-algebra $\A \subseteq E$ such that $\A \cdot E \subseteq E$, and an approximate unit $(e_\la)_\la$ for $\A$ such that $\lim_\la e_\la x = x$ for every $x\in E$. 
In \cite[Proposition 2.5]{DKP25} it is shown that every contractive  positive functional on $E$ extends to a  contractive  positive functional on $\ca(E)$.
Hence $E \subseteq \ca(E)$ is an embedding.

Such examples arise naturally in the context of \cite{CS21, CS22}.
Is $\S$ is an operator system and $\K$ is the compact operators, then $\S \otimes \K$ admits such an approximate unit.
A further class is formed by the selfadjoint operator spaces generated by a non-degenerate C*-correspondence.
\end{example}

Another class of selfadjoint operator spaces that allows positive extensions with preservation of the norm to the C*-algebra they generate arises from matrix ordered operator spaces with an approximate order unit that defines the matrix norms considered in \cite[Section 2.2]{CS22}.
We give the relevant definitions. Let $E$ be a matrix ordered operator space. An \emph{approximate order unit} for $E$ is a net $(e_\la)_\la\subseteq E_+$ that satisfies the following:
\begin{enumerate}
\item $e_\la\leq e_\mu$ if $\la\leq \mu$.
\item For every $x\in E_{sa}$ there exist a positive $t\in \bR$ and a $\la$ such that $-te_\la \leq x \leq t e_\la$.
\end{enumerate}

In \cite[Lemma 2.9]{CS22} it is shown that $e_\la^n:=(e_\la\otimes I_n)_\la$ is an approximate order unit for $M_n(E)$. 
We say that $(e_\la)_{\la}$ is \emph{matrix norm-defining} if for every $n\in \bN$ and $x\in M_n(E)$ we have
\[
\|x\|_{M_n(E)}=\inf\left\{t: \begin{bmatrix} te_\la^n & x \\ x^* & t e_\la^n \end{bmatrix}\in M_{2n}(E)_+\text{ for some } \la\right\}.
\]
By \cite[Proposition 2.10]{CS22} those spaces are selfadjoint operator spaces in our sense.

In order to obtain extensions, the authors in \cite{CS22} consider selfadjoint operator spaces $E \subseteq \ca(E)$ that contain a matrix norm-defining order unit for the \emph{entire} $\ca(E)$.
In the third paragraph of the proof of \cite[Proposition 2.13]{CS22}, it is used that, in this case, the inclusion of $E \subseteq \ca(E)$ extends to a unital complete order embedding $E^\# \subseteq \ca(E)^\#$, thus allowing the extension of a contractive positive functional.
In order to show that $E \subseteq \ca(E)$ is an embedding we can show that \cite[Lemma 2.11]{CS22} is an “if and only if” statement.

\begin{lemma}\label{L:orderunit}
Let $E$ be a selfadjoint operator space and let $(e_\la)_\la$ be matrix norm-defining approximate order unit for $E$. Let $\vphi \colon E \to \bC$ be a selfadjoint functional. 
Then, $\vphi$ is positive if and only if  $\| \vphi\| = \lim \vphi(e_{\la})$.

\end{lemma}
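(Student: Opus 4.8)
The plan is to establish the two implications separately; the forward one is essentially \cite[Lemma 2.11]{CS22}, so the substance lies in the converse. Before either direction I would record the elementary fact that a matrix norm-defining approximate order unit is automatically contractive: since $\left[\begin{smallmatrix} e_\la & e_\la \\ e_\la & e_\la \end{smallmatrix}\right] = a\, e_\la\, a^*$ with $a = \left[\begin{smallmatrix}1\\1\end{smallmatrix}\right] \in M_{2,1}$ lies in $M_2(E)_+$ (by the defining property $a\, M_n(E)_+\, a^* \subseteq M_m(E)_+$), the value $t = 1$ is admissible in the infimum defining $\|e_\la\|_{M_1(E)}$, whence $\|e_\la\| \le 1$ for every $\la$; in particular $0 \le e_\la \le I$ inside $\B(H)$. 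I would also use the standard reformulation that, for $x \in E_{sa}$, conjugating $\left[\begin{smallmatrix} s e_\la & x \\ x & s e_\la \end{smallmatrix}\right]$ by the scalar unitary $\tfrac{1}{\sqrt{2}}\left[\begin{smallmatrix}1&1\\-1&1\end{smallmatrix}\right]$ shows that it lies in $M_2(E)_+$ if and only if $s e_\la - x \in E_+$ and $s e_\la + x \in E_+$, so that the matrix norm-defining property at the first level reads $\|x\| = \inf\{ s > 0 : -s e_\la \le x \le s e_\la \text{ for some } \la \}$.

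For the forward implication, assume $\vphi$ is positive. Since $(e_\la)_\la$ is increasing so is $(\vphi(e_\la))_\la$, and $0 \le \vphi(e_\la) \le \|\vphi\|\, \|e_\la\| \le \|\vphi\|$; hence $L := \lim_\la \vphi(e_\la) = \sup_\la \vphi(e_\la)$ exists and $L \le \|\vphi\|$. For the reverse inequality, fix $x \in E_{sa}$ with $\|x\| \le 1$ and $\eps > 0$; by the reformulation above there are $s < 1+\eps$ and $\la$ with $s e_\la - x,\, s e_\la + x \in E_+$, so $0 \le \vphi(s e_\la \pm x) = s\,\vphi(e_\la) \pm \vphi(x)$ and therefore $|\vphi(x)| \le s\,\vphi(e_\la) \le (1+\eps) L$. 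Letting $\eps \to 0$ and taking the supremum over selfadjoint contractions $x$ --- which computes $\|\vphi\|$, as $\vphi$ is selfadjoint --- yields $\|\vphi\| \le L$, so $\|\vphi\| = L = \lim_\la \vphi(e_\la)$.

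For the converse --- the new content --- assume $\vphi$ is a selfadjoint functional with $\|\vphi\| = \lim_\la \vphi(e_\la) =: M$. Let $p \in E_+$. By the approximate order unit axiom there are $t > 0$ and $\la_0$ with $p \le t e_{\la_0}$, so that for every $\mu \ge \la_0$ we have $0 \le p \le t e_{\la_0} \le t e_\mu$, giving $0 \le t e_\mu - p \le t e_\mu \le t I$ in $\B(H)$ and hence $\|t e_\mu - p\| \le t$. Consequently $|t\,\vphi(e_\mu) - \vphi(p)| = |\vphi(t e_\mu - p)| \le \|\vphi\|\, \|t e_\mu - p\| \le Mt$, whence $\vphi(p) \ge t\,\vphi(e_\mu) - Mt = t\,(\vphi(e_\mu) - M)$ for every $\mu \ge \la_0$. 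Passing to the limit over $\mu$, so that $\vphi(e_\mu) \to M$, gives $\vphi(p) \ge 0$; since $p \in E_+$ was arbitrary, $\vphi$ is positive.

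The step I would flag as the crux is the automatic contractivity $\|e_\la\| \le 1$: without it both the bound $\vphi(e_\la) \le \|\vphi\|$ in the forward direction and the bound $\|t e_\mu - p\| \le t$ in the converse collapse, while with it in hand the remainder is a routine unwinding of the definitions. The only other points are bookkeeping: choosing an admissible $s < 1 + \eps$ in the norm infimum for a contraction in the forward direction, and using that the hypothesis of the converse already provides the existence of the net limit $\lim_\mu \vphi(e_\mu)$ invoked at the end.
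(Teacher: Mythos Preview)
Your proof is correct and follows essentially the same strategy as the paper's, with a modest variation in the converse direction. The paper normalises to a positive contraction $x$, invokes \cite[Proposition 1.8(i)]{KV97} to obtain $-t'e_\la \le x \le t'e_\la$ with $t' < 1+\eps$, deduces the sandwich $-(1+\eps)e_\mu \le e_\mu - x \le (1+\eps)e_\mu$, and then uses the matrix norm-defining property intrinsically to read off $\|e_\mu - x\| \le 1+\eps$, carrying the $\eps$ to the end. You instead keep $p \in E_+$ arbitrary, dominate it by $t e_\mu$, and obtain $\|t e_\mu - p\| \le t$ extrinsically from $0 \le t e_\mu - p \le tI$ in $\B(H)$ via your contractivity step $e_\mu \le I$; this avoids both the $\eps$-approximation and the appeal to \cite{KV97}, at the harmless cost of using a concrete realisation, which is available since the lemma already assumes $E$ is a selfadjoint operator space. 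Your explicit verification that $\|e_\la\|\le 1$ is a welcome addition, as the paper simply asserts contractivity.
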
 

\begin{proof}
If $\vphi$ is positive, then it follows from \cite[Lemma 2.11]{CS22} that its norm equals the limit of $ \vphi(e_\la)$. 
Conversely, let $x$ be in $E_{+}$ with $ \|x\|\leq 1$ and $\eps>0$ be arbitrary. 
Since the approximate order unit is matrix norm-defining, by \cite[Proposition 1.8(i)]{KV97} there exists a positive $t'\in \bR $ and a $\la$ such that 
\[
-t'e_{\la} \leq x\leq t' e_{\la}
\qand
t' < \|x\| + \eps \leq 1 +\eps.
\] 
Then for any $\mu \geq \la$ we have 
\[ 
-(1+ \eps)e_{\mu} \leq - t' e_{\la} \leq -x \leq e_{\mu }-x \leq e_{\mu} \leq (1+\eps)e_{\mu}.
\] 
Since the approximate order unit is contractive, we get
\[
| \vphi(x) - \vphi(e_{\mu}) |
= 
| \vphi(x- e_{\mu})| 
\leq 
\|\vphi\| \cdot \|x - e_\mu\|
\leq
(1 + \eps)\|\vphi \|.
\]
Taking the limit and using the assumption we obtain
\begin{align*}
| \vphi(x) - \| \vphi \| | \leq(1+\eps) \| \vphi \|.
\end{align*}
Since $\eps >0$ was arbitrary, we have
\begin{align*}
| \vphi(x) - \| \vphi \| | \leq \| \vphi \| 
\Longrightarrow 
- \| \vphi \| \leq \vphi(x) - \|\vphi \| \leq \| \vphi \|, 
\end{align*}
where we used that $\vphi$ is selfadjoint and $ \vphi(x) \in \bR$.
This implies that $ \vphi (x) \geq 0$.
\end{proof}

To complete the discussion in the context of \cite[Section 2.2]{CS22}, we now obtain the following proposition.
It will be superseded by Corollary \ref{C:mndaou}.

\begin{proposition}\label{P:mndaou}
Let $E \subseteq \ca(E)$ be a selfadjoint operator space.
If $E$ contains a matrix norm-defining approximate order unit $(e_\la)_\la$ for  $\ca(E)$, then $E\subseteq\ca(E)$ is an embedding.
\end{proposition}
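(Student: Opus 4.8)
The plan is to verify the embedding property via the functional criterion of Theorem \ref{T:equivemb}: it suffices to show that for every $n \in \bN$ each $\vphi \in {\rm CCP}(M_n(E), \bC)$ extends to some $\wt\vphi \in {\rm CCP}(M_n(\ca(E)), \bC)$ with $\wt\vphi|_{M_n(E)} = \vphi$. The key preliminary observation is that, by \cite[Lemma 2.9]{CS22}, the net $(e_\la \otimes I_n)_\la$ is a matrix norm-defining approximate order unit for $M_n(\ca(E))$, and since it lies in the subspace $M_n(E)$, whose matrix cones and matrix norms are exactly those inherited from $M_n(\ca(E))$, it is simultaneously a matrix norm-defining approximate order unit for $M_n(E)$. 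Hence Lemma \ref{L:orderunit} is available on both $M_n(E)$ and $M_n(\ca(E))$ with this same net.

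So, fixing $n$ and $\vphi \in {\rm CCP}(M_n(E), \bC)$ --- which, as recalled in the preliminaries, is just a contractive positive functional --- I would first invoke the forward implication of Lemma \ref{L:orderunit} on $M_n(E)$ to get $\lim_\la \vphi(e_\la \otimes I_n) = \|\vphi\| \le 1$. Next I would extend $\vphi$, by the Hahn--Banach Theorem, to a bounded functional on $M_n(\ca(E))$ of the same norm, and symmetrise it, replacing the extension $\psi_0$ by $\tfrac12(\psi_0 + \psi_0^*)$: this still extends the selfadjoint functional $\vphi$, is itself selfadjoint, and has norm at most $\|\vphi\|$, hence exactly $\|\vphi\|$. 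Denote the result by $\psi \colon M_n(\ca(E)) \to \bC$. Since $e_\la \otimes I_n \in M_n(E)$ for every $\la$, we have
\[
\lim_\la \psi(e_\la \otimes I_n) = \lim_\la \vphi(e_\la \otimes I_n) = \|\vphi\| = \|\psi\|,
\]
so the converse implication of Lemma \ref{L:orderunit}, now applied on $M_n(\ca(E))$, shows that $\psi$ is positive. Thus $\wt\vphi := \psi$ lies in ${\rm CCP}(M_n(\ca(E)), \bC)$ and extends $\vphi$, and Theorem \ref{T:equivemb} yields that $E \subseteq \ca(E)$ is an embedding.

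I do not expect a serious obstacle here: the content of the proposition is precisely the upgrade of \cite[Lemma 2.11]{CS22} to the biconditional recorded in Lemma \ref{L:orderunit}, exactly as anticipated in the discussion preceding the statement, together with a norm-preserving selfadjoint Hahn--Banach extension. The only point that needs a little care is the routine bookkeeping that $(e_\la \otimes I_n)_\la$ is a matrix norm-defining approximate order unit for both $M_n(E)$ and $M_n(\ca(E))$, which follows from \cite[Lemma 2.9]{CS22} and the fact that the matrix cones of $M_n(E)$ are the intersections of those of $M_n(\ca(E))$ with $M_n(E)$.
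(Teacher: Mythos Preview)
Your proposal is correct and follows essentially the same route as the paper's proof: invoke Theorem \ref{T:equivemb}, take a norm-preserving Hahn--Banach extension, symmetrise, and then use both directions of Lemma \ref{L:orderunit} with the net $(e_\la \otimes I_n)_\la$ to conclude positivity of the extension. Your explicit bookkeeping that $(e_\la \otimes I_n)_\la$ is a matrix norm-defining approximate order unit for both $M_n(E)$ and $M_n(\ca(E))$ is a point the paper leaves implicit, but it is indeed routine for the reasons you indicate.
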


\begin{proof}
By Theorem \ref{T:equivemb} it suffices to show that if $\vphi \colon M_n(E) \to \bC$ is a contractive positive map then it admits a contractive positive extension $\wt{\vphi}\colon M_n(\ca(E))\to \bC$.
By the Hahn--Banach Theorem, $\vphi$ can be extended to a functional $\wt{\vphi} \colon M_n(\ca(E)) \to \bC$ with $ \| \wt{\vphi} \| = \| \vphi \|$, and by replacing $\wt{\vphi}$ with $(\wt{\vphi} + \wt{\vphi}^{*})/2$ we may assume that $\wt{\vphi}$ is selfadjoint. 
Lemma \ref{L:orderunit} yields $\|\vphi\| = \lim \vphi(e_{\la}^n)$, and as $\|\wt{\vphi} \| = \|\vphi\|$, using Lemma \ref{L:orderunit} once more we obtain that $\wt{\vphi}$ is positive. 
\end{proof}

We next consider embeddings not just with respect to some generated C*-algebra but with respect to any completely isometric complete order embedding in a selfadjoint operator space.

\begin{definition}\label{D:aep}
We say that a selfadjoint operator space $E$ has \emph{the automatic embedding property} if every completely isometric complete order embedding $\vthe \colon E \to F$ with $F$ a selfadjoint operator space is an embedding.
\end{definition}

In particular we just need to check the completely isometric complete order Hilbertian embeddings.

\begin{lemma}\label{L:red}
Let $E$ be a selfadjoint operator space.
The following are equivalent:
\begin{enumerate}
\item $E$ has the automatic embedding property.
\item Every completely isometric complete order embedding $\vthe \colon E \to \B(H)$ is an embedding.
\end{enumerate}
\end{lemma}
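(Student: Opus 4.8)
The implication (i) $\Rightarrow$ (ii) is immediate: $\B(H)$ is itself a selfadjoint operator space, so the automatic embedding property in (i) applies to any completely isometric complete order embedding $\vthe\colon E\to\B(H)$. The substance is the reverse implication, and my plan is to reduce an arbitrary selfadjoint--operator--space codomain to the concrete Hilbertian case and then ``cancel'' the resulting intermediate inclusion.

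So assume (ii), and let $\vthe\colon E\to F$ be a completely isometric complete order embedding with $F$ a selfadjoint operator space. By the standing convention in this paper, $F$ is concretely a selfadjoint norm-closed subspace of some $\B(H)$ carrying the inherited matrix norms and matrix cones, so the inclusion $\iota\colon F\hookrightarrow\B(H)$ is a completely isometric complete order embedding. Consequently $\iota\circ\vthe\colon E\to\B(H)$ is also a completely isometric complete order embedding, and by hypothesis (ii) it is an embedding; the equivalence of items (i) and (iii) in Theorem \ref{T:equivemb} then says that $\iota\circ\vthe$ is a gauge maximal isometry, i.e.
\[
\dist(x, M_n(E)_+) = \dist\big((\iota\circ\vthe)^{(n)}(x), M_n(\B(H))_+\big)
\]
for every $x\in M_n(E)_{sa}$ and every $n\in\bN$.

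To conclude, I would invoke the elementary monotonicity of distance to the positive cone: if $x\in M_n(E)_{sa}$ and $p\in M_n(E)_+$, then $\vthe^{(n)}(p)\in M_n(F)_+$ while $\|x-p\|=\|\vthe^{(n)}(x)-\vthe^{(n)}(p)\|$, because $\vthe$ is completely positive and completely isometric; taking the infimum over $p$ gives $\dist(x, M_n(E)_+)\ge\dist(\vthe^{(n)}(x), M_n(F)_+)$, and the same reasoning applied to $\iota$ gives $\dist(\vthe^{(n)}(x), M_n(F)_+)\ge\dist((\iota\circ\vthe)^{(n)}(x), M_n(\B(H))_+)$. Comparing with the displayed equality forces equality throughout, so $\dist(x, M_n(E)_+)=\dist(\vthe^{(n)}(x), M_n(F)_+)$ for all $x\in M_n(E)_{sa}$ and $n\in\bN$; hence $\vthe$ is a gauge maximal isometry and, by Theorem \ref{T:equivemb} once more, an embedding. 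An equivalent and equally short route bypasses the distance formula: functoriality of Werner's unitisation gives $(\iota\circ\vthe)^\#=\iota^\#\circ\vthe^\#$, both $\iota^\#$ and $\vthe^\#$ are unital completely positive by \cite[Lemma~4.9]{Wer02} and therefore completely contractive, so sandwiching $\|z\|=\|(\iota^\#\circ\vthe^\#)^{(n)}(z)\|\le\|(\vthe^\#)^{(n)}(z)\|\le\|z\|$ shows $\vthe^\#$ is completely isometric. There is no genuine obstacle here; the only point needing care is the monotonicity of the distance-to-the-positive-cone functional under completely positive complete isometries, which is precisely what licenses the ``cancellation'' of $\iota$.
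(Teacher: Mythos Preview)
Your proposal is correct, and your second route---the sandwich argument with $(\iota\circ\vthe)^\# = \iota^\#\circ\vthe^\#$---is essentially the paper's proof; the only cosmetic difference is that the paper chooses $\vthe'\colon F\to\B(H)$ to be an \emph{embedding} (so $(\vthe')^\#$ is completely isometric and cancels on the nose), whereas you take the raw inclusion $\iota$ and use only that $\iota^\#$ is completely contractive, which is actually a mild simplification since you do not need to invoke the existence of an embedding for $F$. Your primary route via gauge maximal isometry and Theorem~\ref{T:equivemb} is a legitimate alternative that trades the unitisation bookkeeping for the distance-to-cone monotonicity you spell out, but it is really the same cancellation argument seen through the lens of the earlier characterisation.
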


\begin{proof}
That item (i) implies (ii) is trivial.
For the converse, let $\vthe \colon E \to F$ be a completely isometric complete order embedding.
Let $\vthe' \colon F \to \B(H)$ be an embedding, so that the unitisation $(\vthe')^{\#}$ is completely isometric.
Since $\vthe' \circ \vthe$ is automatically an embedding, the unitisation
\[
(\vthe' \circ \vthe)^{\#} = (\vthe')^{\#} \circ \vthe^{\#}
\]
is completely isometric.
Hence $\vthe^{\#}$ is completely isometric, and thus $\vthe$ is an embedding.
\end{proof}

We will prove that approximately positively generated selfadjoint operator spaces, where the positive decompositions are compatible with the norms, have the automatic embedding property.
Following the terminology used in \cite{HKM23} we give the next definitions.

\begin{definition}
Let $E$ be a selfadjoint operator space and  set 
\[
\left(M_n(E)_+\right)_{\ka}:=\{p\in M_n(E)_+ : \|p\|\leq \ka\}.
\]
\begin{enumerate}
\item We say that $E$ is \emph{completely $\ka$-generated} if there exists a $\ka\geq 1$ such that
\[
\left(M_n(E)_{sa}\right)_1 \subseteq  \left(M_n(E)_+\right)_{\ka} - \left(M_n(E)_+\right)_{\ka}.
\]

\item We say that $E$ is \emph{completely approximately $\ka$-generated} if there exists a $\ka\geq 1$ such that
\[
\left(M_n(E)_{sa}\right)_1 \subseteq \ol{\left(M_n(E)_+\right)_{\ka} - \left(M_n(E)_+\right)_{\ka}}.
\]
\end{enumerate}
\end{definition}

We will need the following lemma.

\begin{lemma} \label{L:qsposcon}
Let $E \subseteq \B(H)$ be a selfadjoint operator space and let $\vphi \in {\rm CCP}(E, \bC)$.
If $\vphi$ satisfies
\[
\|\vphi\| = \sup\{ \vphi(x) : x \in (E_+)_1\},
\]
then it extends to a $\wt{\vphi} \in {\rm CCP}(\B(H), \bC)$ (and consequently with $\nor{\wt{\vphi}} = \nor{\vphi}$).

In particular, every selfadjoint extension $\wt{\vphi}$ of $\vphi$ on $\B(H)$ with $\nor{\wt{\vphi}} = \nor{\vphi}$ is positive.
\end{lemma}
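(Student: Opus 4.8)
The plan is to prove the \emph{``in particular''} clause first, since the main assertion follows from it together with the Hahn--Banach Theorem. So suppose $\wt{\vphi}$ is a selfadjoint functional on $\B(H)$ with $\wt{\vphi}|_E = \vphi$ and $\nor{\wt{\vphi}} = \nor{\vphi}$; after rescaling we may assume $\nor{\vphi} = 1$ (the case $\vphi = 0$ is trivial). I would invoke the Jordan decomposition of a bounded selfadjoint functional on a C*-algebra, writing $\wt{\vphi} = \wt{\vphi}_+ - \wt{\vphi}_-$ with $\wt{\vphi}_\pm$ positive and $\nor{\wt{\vphi}_+} + \nor{\wt{\vphi}_-} = \nor{\wt{\vphi}} = 1$. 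The hypothesis $\nor{\vphi} = \sup\{\vphi(x) : x \in (E_+)_1\}$ produces a sequence $x_n \in (E_+)_1$ with $\vphi(x_n) \to 1$; regarding each $x_n$ as a positive contraction in $\B(H)$, one has $\wt{\vphi}(x_n) = \wt{\vphi}_+(x_n) - \wt{\vphi}_-(x_n) \le \wt{\vphi}_+(x_n) \le \nor{\wt{\vphi}_+}$, and letting $n \to \infty$ forces $\nor{\wt{\vphi}_+} = 1$, hence $\wt{\vphi}_- = 0$ and $\wt{\vphi} \ge 0$.

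For the existence statement I would first manufacture a selfadjoint norm-preserving extension: by Hahn--Banach extend $\vphi$ to $\Psi \in \B(H)^*$ with $\nor{\Psi} = \nor{\vphi}$, and then symmetrise by setting $\wt{\vphi} := \tfrac12(\Psi + \Psi^*)$ where $\Psi^*(T) := \ol{\Psi(T^*)}$. Since $\vphi$, being positive, is selfadjoint and $E$ is a selfadjoint subspace, $\wt{\vphi}$ still restricts to $\vphi$ on $E$, and a one-line squeeze gives $\nor{\wt{\vphi}} = \nor{\vphi}$. By the paragraph above, $\wt{\vphi}$ is positive. A positive functional on a C*-algebra is automatically completely positive, and for a functional the cb-norm coincides with the norm (Smith's lemma, cf.\ Remark~\ref{R:Arv_cor}), so $\nor{\wt{\vphi}}_{\rm cb} = \nor{\vphi} \le 1$ because $\vphi \in {\rm CCP}(E,\bC)$. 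Hence $\wt{\vphi} \in {\rm CCP}(\B(H),\bC)$, it extends $\vphi$, and $\nor{\wt{\vphi}} = \nor{\vphi}$, as required.

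I expect no serious obstacle here; the one point that needs care is conceptual rather than technical. Because $E$ carries no unit (nor an approximate unit of $\B(H)$), the familiar criterion ``$\nor{\psi} = \psi(I) \Rightarrow \psi \ge 0$'' on a unital C*-algebra cannot be applied directly to detect positivity of $\wt{\vphi}$; replacing the single test element $I$ by an approximating sequence of positive contractions drawn from $E$ and feeding it into the Jordan decomposition is exactly what makes the argument work, and this is where the hypothesis $\nor{\vphi} = \sup\{\vphi(x) : x \in (E_+)_1\}$ gets used. Everything else is a short assembly of standard facts.
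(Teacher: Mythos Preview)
Your proposal is correct and follows essentially the same route as the paper: Hahn--Banach extension, symmetrisation, then the Jordan decomposition $\wt{\vphi} = \wt{\vphi}_+ - \wt{\vphi}_-$ combined with the hypothesis to force $\wt{\vphi}_- = 0$. The only cosmetic differences are that you prove the ``in particular'' clause first and use an approximating sequence rather than taking the supremum directly, neither of which changes the argument.
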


\begin{proof}
By the Hahn--Banach Theorem, let $\wt{\vphi}$ be a functional on $\B(H)$ with $\|\wt{\vphi}\| = \|\vphi\|$ that extends $\vphi$.
By replacing $\wt{\vphi}$ with $(\wt{\vphi} + \wt{\vphi}^*)/2$ we may suppose that $\wt{\vphi}$ is a selfadjoint extension of $\vphi$ with the same norm. 
Consider the Jordan decomposition
\[
\wt{\vphi} = \wt{\vphi}_+ - \wt{\vphi}_{-}
\text{ such that }
\|\wt{\vphi}\| = \|\wt{\vphi}_{+}\| + \|\wt{\vphi}_{-}\|,
\]
into positive functionals $\wt{\vphi}_{\pm}$ of $\B(H)$.
For $x \in E$ with $0 \leq x \leq I_H$ we have
\[
\vphi(x)
=
\wt{\vphi}(x)
=
\wt{\vphi}_{+}(x) - \wt{\vphi}_{-}(x)
\leq
\wt{\vphi}_{+}(x)
\leq
\wt{\vphi}_{+}(I_H)
=
\|\wt{\vphi}_+\|.
\]
Taking supremum over all such $x$ and using the assumption we get
\[
\|\wt{\vphi}_{+}\| + \|\wt{\vphi}_{-}\|
=
\|\wt{\vphi}\|
=
\|\vphi\|
\leq 
\|\wt{\vphi}_{+}\|.
\]
We conclude that $\|\wt{\vphi}_{-}\| \leq 0$ and so $\wt{\vphi} = \wt{\vphi}_{+}$.
\end{proof}

Consequently, we establish a sufficient condition under which a selfadjoint operator space admits the automatic embedding property.

\begin{corollary}\label{C:decomp}
If $E$ is a selfadjoint operator space such that 
\[
\|\vphi\| = \sup\{ \vphi(x) : x \in (M_n(E)_+)_1\}
\foral
\vphi \in {\rm CCP}(M_n(E), \bC), n \in \bN,
\]
then $E$ has the automatic embedding property.
\end{corollary}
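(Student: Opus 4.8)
The plan is to combine Lemma \ref{L:red}, Lemma \ref{L:qsposcon} and Theorem \ref{T:equivemb}. By Lemma \ref{L:red} it is enough to prove that every completely isometric complete order embedding $\vthe \colon E \to \B(H)$ is an embedding, so fix such a $\vthe$.

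For each $n \in \bN$ the amplification $\vthe^{(n)} \colon M_n(E) \to M_n(\B(H)) = \B(H^{(n)})$ is again a completely isometric complete order embedding. Hence $M_n(E)$, identified with $\vthe^{(n)}(M_n(E)) \subseteq \B(H^{(n)})$, is a selfadjoint operator space whose matrix cones and norms are inherited from $\B(H^{(n)})$; in particular $\vthe^{(n)}(M_n(E)_+) = \vthe^{(n)}(M_n(E)) \cap \B(H^{(n)})_+$, so that the set $(M_n(E)_+)_1$ appearing in the hypothesis is carried exactly onto the norm-$\le 1$ positive elements of $\B(H^{(n)})$ lying in the range of $\vthe^{(n)}$. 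Now take any $\vphi \in {\rm CCP}(M_n(E), \bC)$. The hypothesis of the corollary says that $\|\vphi\| = \sup\{ \vphi(x) : x \in (M_n(E)_+)_1\}$, which is precisely the condition required to apply Lemma \ref{L:qsposcon} with $M_n(E) \subseteq \B(H^{(n)})$ in place of $E \subseteq \B(H)$. Therefore $\vphi$ extends to some $\wt{\vphi} \in {\rm CCP}(\B(H^{(n)}), \bC) = {\rm CCP}(M_n(\B(H)), \bC)$, and being an extension it satisfies $\wt{\vphi} \circ \vthe^{(n)} = \vphi$. Since $n$ was arbitrary, condition (ii) of Theorem \ref{T:equivemb} holds for $\vthe \colon E \to \B(H)$ (with $F = \B(H)$), so $\vthe$ is an embedding; appealing once more to Lemma \ref{L:red} finishes the proof.

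There is no serious obstacle: all the real work sits in Lemma \ref{L:qsposcon} (a Jordan-decomposition argument showing that a norm-preserving selfadjoint extension of such a $\vphi$ must itself be positive) and in the equivalence of Theorem \ref{T:equivemb}. The only points needing a little care are that the corollary's per-level hypothesis is literally the hypothesis of Lemma \ref{L:qsposcon} once $M_n(E)$ is realised concretely via $\vthe^{(n)}$, and that a Hahn--Banach type extension automatically restricts back to $\vphi$; both are routine.
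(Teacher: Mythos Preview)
Your proof is correct and follows essentially the same approach as the paper: reduce via Lemma \ref{L:red} to a concrete inclusion $E \subseteq \B(H)$, apply Lemma \ref{L:qsposcon} at each matrix level to extend contractive positive functionals, and conclude with the equivalence in Theorem \ref{T:equivemb}. The paper's version is simply more terse.
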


\begin{proof}
Without loss of generality, and by Lemma \ref{L:red}, suppose that $E \subseteq \B(H)$ and we will show that this inclusion is an embedding. 
By Lemma \ref{L:qsposcon} any contractive positive functional of $M_n(E)$ extends to a contractive positive functional of $M_n(\B(H))$, and hence
Theorem \ref{T:equivemb} completes the proof.
\end{proof}

The condition of Corollary \ref{C:decomp} is satisfied when $E$ is a completely approximately 1-generated selfadjoint operator space.

\begin{theorem}\label{T:1apppos}
If $E$ is a completely approximately 1-generated selfadjoint operator space, then $E$ has the automatic embedding property.
\end{theorem}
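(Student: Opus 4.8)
The plan is to reduce the statement to the criterion of Corollary \ref{C:decomp}. Concretely, I would show that for every $n \in \bN$ and every $\vphi \in {\rm CCP}(M_n(E), \bC)$ one has $\|\vphi\| = \sup\{\vphi(x) : x \in (M_n(E)_+)_1\}$; once this is in hand, Corollary \ref{C:decomp} immediately yields that $E$ has the automatic embedding property. The inequality $\|\vphi\| \geq \sup\{\vphi(x) : x \in (M_n(E)_+)_1\}$ is free, since $\vphi$ is positive and completely contractive, so $0 \leq \vphi(x) \leq \|\vphi\|\,\|x\| \leq \|\vphi\|$ for every $x \in (M_n(E)_+)_1$.

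For the reverse inequality I would first recall the elementary fact that a selfadjoint bounded functional on a space carrying an isometric involution is norming on its selfadjoint unit ball: given $y \in M_n(E)$ with $\|y\| \leq 1$, rotating by a unimodular scalar and then averaging with the adjoint produces an element $x \in (M_n(E)_{sa})_1$ with $\vphi(x) = |\vphi(y)|$, whence $\|\vphi\| = \sup\{|\vphi(x)| : x \in (M_n(E)_{sa})_1\}$. Then, fixing such an $x$ and an $\eps > 0$, I would invoke complete approximate $1$-generation at level $n$, namely $(M_n(E)_{sa})_1 \subseteq \ol{(M_n(E)_+)_1 - (M_n(E)_+)_1}$, to write $x = p - q$ up to an error of norm $\eps$, with $p, q \in (M_n(E)_+)_1$. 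Since $\vphi$ is positive, both $\vphi(p)$ and $\vphi(q)$ are nonnegative, so $|\vphi(p - q)| \leq \max\{\vphi(p), \vphi(q)\}$, and the triangle inequality gives $|\vphi(x)| \leq \max\{\vphi(p), \vphi(q)\} + \|\vphi\|\,\eps \leq \sup\{\vphi(r) : r \in (M_n(E)_+)_1\} + \|\vphi\|\,\eps$. Taking the supremum over $x \in (M_n(E)_{sa})_1$ and then letting $\eps \to 0$ closes the argument.

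I do not expect a serious obstacle here: the genuinely delicate point — that a norm-preserving selfadjoint extension of $\vphi$ to the ambient $\B(H)$ is automatically positive — has already been isolated as a Jordan-decomposition argument in Lemma \ref{L:qsposcon} and packaged into Corollary \ref{C:decomp}, so all that remains is to verify the supremum identity above. The one point to watch is the constant: complete approximate $\ka$-generation with $\ka > 1$ would only let me approximate selfadjoint contractions by differences of elements of $(M_n(E)_+)_\ka$, which would not match the hypothesis of Corollary \ref{C:decomp}; it is precisely the value $\ka = 1$ that forces the approximating positive elements into the unit ball of the cone, and hence makes the whole scheme go through.
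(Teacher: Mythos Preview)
Your proposal is correct and follows essentially the same route as the paper's proof: both reduce to Corollary \ref{C:decomp} by verifying the supremum identity $\|\vphi\| = \sup\{\vphi(r) : r \in (M_n(E)_+)_1\}$, first reducing to the selfadjoint unit ball and then using the approximate $1$-decomposition together with positivity of $\vphi$ to pass to the positive unit ball. The only cosmetic difference is that the paper uses continuity of $\vphi$ to pick $x$ already in $(M_n(E)_+)_1 - (M_n(E)_+)_1$ and then bounds $\vphi(x) \leq \vphi(x_1)$ directly, whereas you approximate and use $|\vphi(p-q)| \leq \max\{\vphi(p),\vphi(q)\}$; both arguments are equivalent.
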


\begin{proof}
Take $\vphi \in {\rm CCP}(M_n(E), \bC)$ and $\eps>0$. 
Since $\vphi$ is selfadjoint, we have 
\[
\|\vphi\|=\sup \{ \vphi(x) : x\in (M_n(E)_{sa})_1 \}.
\]
Moreover, since $E$ is completely approximately 1-generated, we obtain 
\[
\sup \{ \vphi(x) : x\in (M_n(E)_{sa})_1\}
=
\sup \{ \vphi(x) : x\in \left((M_n(E)_+)_1-(M_n(E)_+)_1\right) \cap (M_n(E)_{sa})_1  \}.
\]
Thus, we may pick an element $x \in (M_n(E)_{sa})_1$ such that $\vphi(x) > \|\vphi\|-\eps$, and $x = x_1 - x_2$ with $x_1, x_2 \in M_n(E)_+$  satisfying $\|x_1\|, \|x_2\| \leq 1$.
We then have
\[
\vphi(x_1)\geq \vphi(x_1)-\vphi(x_2)=\vphi(x)> \|\vphi\|-\eps.
\]
Since $\eps$ was arbitrary and $\|x_1\| \leq 1$ we obtain 
\[
\sup \{ \vphi(x') : x' \in (M_n(E)_+)_1\} = \|\vphi\|.
\]
Since $\vphi$ was arbitrary, Corollary \ref{C:decomp} completes the proof.
\end{proof}

As a further application, we improve Proposition \ref{P:mndaou} and require that the matrix norm-defining approximate order unit works just for the selfadjoint operator space, as opposed for the entire generated C*-algebra.

\begin{corollary}\label{C:mndaou}
Let $E$ be a selfadjoint operator space.
If $E$ contains a matrix norm-defining approximate order unit $(e_\la)_\la$ for $E$, then $E$ has the automatic embedding property.
\end{corollary}

\begin{proof}
It follows from Lemma \ref{L:orderunit} and Corollary \ref{C:decomp} by noting that $e_\la^n$ is contractive and positive for every $\la$ and $n\in \bN$.
\end{proof}

The following corollary is a refinement of \cite[Theorem 4.4 and Theorem 4.5]{Rus23}. 
In particular, we show that the inclusion of operator systems need not to be unital, and the inclusion of C*-algebras need not to be multiplicative, in order to have gauge maximal inclusions.

\begin{corollary}\label{C:emb}
The following hold:
\begin{enumerate}
\item If $\phi \colon \S \to \T$ is a completely isometric complete order embedding between operator systems, then $\phi$ is an embedding.
\item If $\phi \colon \C \to \D$ is a completely isometric complete order embedding between C*-algebras, then $\phi$ is an embedding.
\end{enumerate}
\end{corollary}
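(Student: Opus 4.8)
The plan is to derive both parts from Theorem \ref{T:1apppos}: I would check that operator systems and C*-algebras are completely approximately $1$-generated (in fact completely $1$-generated), and then simply quote the automatic embedding property. Since $\T$ (resp.\ $\D$) is in particular a selfadjoint operator space and $\phi$ is a completely isometric complete order embedding between selfadjoint operator spaces, the conclusion follows immediately once the $1$-generation is established. Alternatively one could verify the hypothesis of Corollary \ref{C:decomp} directly, using that a positive functional on an operator system or on a C*-algebra attains its norm (in the limit along an approximate unit) on the positive part of the unit ball; but the route through Theorem \ref{T:1apppos} is the cleanest.

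For item (i), let $\S$ be an operator system with Archimedean matrix order unit $e$, so that $e \otimes I_n$ is the order unit of $M_n(\S)$. If $x \in M_n(\S)_{sa}$ with $\|x\| \leq 1$, then $-e\otimes I_n \leq x \leq e\otimes I_n$, so the elements
\[
a := \tfrac{1}{2}(e\otimes I_n + x), \qquad b := \tfrac{1}{2}(e\otimes I_n - x)
\]
lie in $M_n(\S)_+$, satisfy $0 \leq a, b \leq e\otimes I_n$ (hence $\|a\|, \|b\| \leq 1$), and $a - b = x$. Thus $(M_n(\S)_{sa})_1 \subseteq (M_n(\S)_+)_1 - (M_n(\S)_+)_1$ for every $n \in \bN$, so $\S$ is completely $1$-generated, and a fortiori completely approximately $1$-generated. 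By Theorem \ref{T:1apppos}, $\S$ has the automatic embedding property, and therefore $\phi$ is an embedding.

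For item (ii), let $\C$ be a C*-algebra and take $x \in M_n(\C)_{sa}$ with $\|x\| \leq 1$. The positive/negative decomposition $x = x_+ - x_-$ obtained from continuous functional calculus already lives in $M_n(\C)$ --- no passage to a unitisation is needed, since $t \mapsto t_\pm$ are continuous functions vanishing at $0$ --- and satisfies $x_\pm \in M_n(\C)_+$ with $\|x_\pm\| \leq \|x\| \leq 1$. Hence $\C$ is completely $1$-generated, and Theorem \ref{T:1apppos} again yields the automatic embedding property, so $\phi$ is an embedding. I do not expect a genuine obstacle here; the only points requiring care are the non-unital C*-algebra case (one must note that $x_\pm$ stay inside $\C$, not merely inside $\wt{\C}$) and, for operator systems, the identification of $\|x\| \leq 1$ with the matricial order relation $-e\otimes I_n \leq x \leq e\otimes I_n$.
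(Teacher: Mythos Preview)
Your proposal is correct and follows exactly the paper's approach: both proofs invoke Theorem \ref{T:1apppos} after noting that operator systems and C*-algebras are completely $1$-generated. You simply spell out the verification of $1$-generation (via the decomposition $x=\tfrac{1}{2}(e\otimes I_n+x)-\tfrac{1}{2}(e\otimes I_n-x)$ for operator systems and $x=x_+-x_-$ for C*-algebras) that the paper leaves as an assertion.
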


\begin{proof}
Both cases follow from Theorem \ref{T:1apppos} since operator systems and C*-algebras are completely 1-generated selfadjoint operator spaces.
\end{proof}

\begin{remark}\label{R:Arvext}
If $\S \subseteq \T$ are operator systems with $1_\S\neq 1_\T$, then Corollary \ref{C:emb} and Theorem \ref{T:arvemb} yield that every completely contractive completely positive map $\phi \colon \S \to \B(K)$ extends to a completely contractive completely positive map $\phi \colon \T\to \B(K)$.
Thus Arveson's Extension Theorem holds even for non-unital inclusions of operator systems. 
(We note that completely contractive completely positive extensions of unital completely positive maps on operator systems are automatically unital.)
\end{remark}

The following remark was communicated to us by Russell.

\begin{remark} \label{R:unique}
By Corollary \ref{C:emb} we have that if $\S$ is an operator subsystem in an operator system $\T$ (not necessarily with the same units) with co-dimension $1$, then $\T$ is completely order isomorphic to $E^{\#}$.
\end{remark}

We finish the section with an example to demonstrate that the assumption of \emph{complete (approximate) 1-generation} in Theorem \ref{T:1apppos} cannot be relaxed to mere \emph{(approximate) positive generation} if one wishes to obtain an automatic embedding property.

\begin{example}\label{E:nonemb}
Let $E \subseteq M_2(\bC)$ be the selfadjoint operator space generated by
\[
A = \begin{bmatrix} 
1 & 1 \\ 
1 & 1 
\end{bmatrix} \qand 
B = 
\begin{bmatrix}
0 & 0 \\ 
0 & 1 
\end{bmatrix}.
\]
Since $\{A, B\}$ is a linearly independent selfadjoint set, any selfadjoint element in $E$ has the form
\[
X = \al A + \be B = 
\begin{bmatrix} 
\al & \al 
\\ \al & \al + \be 
\end{bmatrix}, \quad 
\text{where } \al, \be \in \bR.
\]
Moreover, the positive cone of $E$ is
\[
E_+ = \{ \al A + \be B : \al, \be \geq 0 \}.
\]
Let $X = \al A + \be B \in E_{sa}$ be arbitrary. 
By setting
\[
\al_1 = \max(\al, 0),\quad \al_2 = \max(-\al, 0) \qand
\be_1 = \max(\be, 0), \quad \be_2 = \max(-\be, 0),
\]
we obtain 
\[
X = (\al_1 A + \be_1 B) - (\al_2 A + \be_2 B)\in E_+-E_+,
\]
and hence $E$ is positively generated.

\medskip

\noindent
{\bf Claim 1.} The space $E$ is not completely 1-generated.

\noindent
{\it Proof of Claim 1.} Towards this end, let
\[
X = A - B = 
\begin{bmatrix} 
1 & 1 \\ 
1 & 0 
\end{bmatrix} 
\in E.
\]
We will show that for any decomposition $X=X_1- X_2$ with $X_1, X_2\in E_+$ we have 
\[
\max\{\|X_1\|, \|X_2\|\} > \|X\| 
=  \frac{1 + \sqrt{5}}{2}.
\]

Indeed, suppose that $X = X_1 - X_2$ with $X_1, X_2 \in E_+$ such that $X_1 = \al_1 A + \be_1 B$ and $X_2 = \al_2 A + \be_2 B$. 
Comparing entries we obtain $\al_1 = 1 + \al_2$ and $\be_1 = \be_2 - 1$.
Hence, by setting $\al_2 := \al$ and $\be_2 := \be$ we have the decomposition 
$X = X_1(\al, \be) - X_2(\al, \be)$
with
\[
X_1(\al, \be) = 
\begin{bmatrix} 
1+ \al & 1 + \al \\ 
1 + \al & \al + \be 
\end{bmatrix}
\qand
X_2(\al, \be) = 
\begin{bmatrix} 
\al & \al \\ 
\al & \al + \be 
\end{bmatrix}.
\]
By positivity we get
\[
X_2(\al, \be) \geq 0 \Rightarrow \al \geq 0, \be \geq 0
\qand
X_1(\al, \be) \geq 0 \Rightarrow \frac{\al + \be}{1+\al} \geq 1 \Rightarrow \be \geq 1.
\]
The eigenvalues of $X_1(\al,\be)$ are
\[
\la_1 = \frac{1}{2}\left(-\sqrt{4\al^2 + 8\al + \be^2 - 2\be +5} + 2\al + \be + 1\right)
\]
and
\[
\la_2 = \frac{1}{2}\left(\sqrt{4\al^2 + 8\al + \be^2 - 2\be +5} + 2\al + \be + 1\right).
\]
Since $\al\geq 0$ and $\be \geq 1$ we get
\[
\|X_1(\al, \be)\|
=
\max\{ |\la_1|, |\la_2| \}
\geq
\la_2
\geq
2
=
\|X_1(0,1)\|.
\]
Therefore we have
\[
\max\{ \|X_1(\al, \be)\|, \|X_2(\al, \be)\|\geq
\|X_1(\al,\be)\| 
\geq 2 > \|X\|,
\]
as required. \hfill{$\Box$}

\medskip

\noindent
{\bf Claim 2.} The inclusion $ E \subseteq \ca(E)= M_2(\bC)$ is not an embedding. 

\noindent
{\it Proof of Claim 2.}
The equality $\ca(E)=M_2(\bC)$ follows directly from the computations 
\[
E_{11} = A - AB-BA+B , \quad
E_{12} = AB-B, \quad
E_{21} = BA-B \qand
E_{22} = B.
\]
By Theorem \ref{T:equivemb} it suffices to show that some $\vphi \in {\rm CCP}(E, \bC)$ does not extend to a $\wt{\vphi} \in {\rm CCP}(M_2(\bC), \bC)$ with the same norm. 
Towards this end, let  $\vphi \colon E \to \bC$ given by 
\[
\vphi (\al A + \be B) := \be/\sqrt{2}. 
\]
Then $ \vphi $ is clearly positive and we will prove that $\|\vphi\|=1$. 
Choosing
$\al = -\sqrt{2}^{-1}$ and $\be = \sqrt{2}$,
we get an element
\[
X = 
\begin{bmatrix} 
-\frac{1}{\sqrt{2}} & -\frac{1}{\sqrt{2}} \\[6pt] 
-\frac{1}{\sqrt{2}} & \frac{1}{\sqrt{2}} 
\end{bmatrix}
\in E,
\]
with  $\|X\| = 1$ and $\vphi (X) = 1$; hence $\|\vphi \| \geq 1$.
Let $(e_i)_{i=1}^2 \subseteq \bC^2$ denote the canonical basis of $\bC^2$, and consider the unit vector
$z := \sqrt{2}^{-1}(e_1- e_2)\in \bC^2$. Then for $X=\al A+ \be B\in E$ we have
\[
X z = \al
\begin{bmatrix} 
1 & 1 \\ 
1 & 1 
\end{bmatrix} 
\frac{1}{\sqrt{2}} 
\begin{bmatrix} 
1 \\
-1 
\end{bmatrix} 
+ \be
\begin{bmatrix} 
0 & 0 \\ 
0 & 1 
\end{bmatrix} 
\frac{1}{\sqrt{2}} 
\begin{bmatrix} 
1 \\ 
-1 
\end{bmatrix} 
=
-\frac{\be}{\sqrt{2}} e_2,
\]
and hence
$\|X\| \geq \frac{|\be|}{\sqrt{2}}=|\vphi(X)|$.
This concludes that $\|\vphi\| = 1$.

Now assume towards a contradiction that there exists a state $\wt{\vphi} \colon M_2(\bC) \to \bC$ that extends $ \vphi$ with the same norm. 
Pick $\varrho \in M_2(\bC)_+$ with ${\rm Tr}(\varrho )=1$ such that $\wt{\vphi} (\cdot ) = {\rm Tr}(\varrho \cdot)$. 
Then 
\[ 
0 = \vphi(A) = \wt{\vphi}(A) = \rm Tr(\varrho A)= \varrho_{11} + \varrho_{22} + 2 \rm Re(  \varrho_{12}) =1 + 2 \rm Re(  \varrho_{12}) 
\Rightarrow
\rm{Re}(\varrho_{12}) = -1/2
\]
On the other hand,
\[
\sqrt{2}^{-1} = \vphi(B) = \wt{\vphi}(B)  =\rm{Tr}(\varrho B) = \varrho_{22}.
\]
As $\varrho\in M_2(\bC)_+$ we have 
\[
\left(1 - \frac{1}{\sqrt{2}}\right) \cdot \frac{1}{\sqrt{2}} = \varrho_{11} \varrho_{22} \geq |\varrho_{12}|^2 \geq \left(\operatorname{Re}(\varrho_{12})\right)^2  = \frac{1}{4},
\]
and thus we arrive to the contradiction $4\sqrt{2} \geq 6$. \hfill{$\Box$}

\smallskip

Note that $\vphi$ does extend to a positive functional on all of $M_2(\bC)$ by Corollary \ref{C:exten}, but not with preservation of norm. 
One such extension is the functional $\wt{\vphi} \colon M_2(\bC) \to \bC$ given by 
\[
\wt{\vphi}(C) := \sca{ C v,v} \text{ for } v = \sqrt[4]{2}^{-1} (e_1-e_2) \in \bC^2.
\]
\end{example}

\section{One-dimensional selfadjoint spaces}\label{S:1d}

In this section we consider one dimensional selfadjoint subspaces of $\B(H)$.
Recall that C*-inclusions preserve the spectrum and the decomposition of selfadjoint elements into the positive and negative parts by functional calculus.
We begin with the following observation.

\begin{lemma}\label{L:1dcon}
Let $\C$ be a C*-algebra and let $E = \spn\{x\}$ be the selfadjoint subspace generated by an element $x \in \C_{sa}$.
Then the following hold:
\begin{enumerate}
\item If $x_{\pm} \neq 0$, then $M_n(E) \cap M_n(\C)_+ = \{0\}$.
\item If $x_{+} = 0$ or $x_{-} = 0$, then for every $y \in M_n(E)_{sa}$ there are $y_1, y_2 \in M_n(E)_+$ such that $y = y_1 - y_2$ and $\max\{\|y_1\|, \|y_2\|\} = \|y\|$. In particular, $E$ is completely $1$-generated.
\end{enumerate}
\end{lemma}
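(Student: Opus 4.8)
The plan is to make everything explicit through the identification $M_n(E) = M_n(\bC)\otimes x$. Since $E = \spn\{x\}$, every element of $M_n(E)$ is $a\otimes x$ for a unique $a\in M_n(\bC)$, and it is selfadjoint exactly when $a\in M_n(\bC)_{sa}$ (because $x=x^*$). The one step that does the work is the following reduction: \emph{for $a\in M_n(\bC)_{sa}$ one has $a\otimes x\in M_n(\C)_+$ if and only if $\mu x\geq 0$ for every eigenvalue $\mu$ of $a$.} To see this I would diagonalise $a = u\,\diag(\mu_1,\dots,\mu_n)\,u^*$ with $u\in M_n(\bC)$ unitary, and conjugate by the unitary $u\otimes 1\in M_n(\wt{\C})$ to get $(u\otimes 1)^*(a\otimes x)(u\otimes 1) = \bigoplus_{i=1}^{n}\mu_i x$; a block-diagonal element of a C*-algebra is positive precisely when each block is, and conjugation by a unitary preserves positivity. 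The same diagonalisation trick, applied to a positive matrix, gives the norm identity $\|a\otimes x\|_{M_n(\C)} = \|a\|_{M_n(\bC)}\,\|x\|_{\C}$, which I will also use.

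For (i), the hypothesis $x_{\pm}\neq 0$ says exactly that $x$ is neither $\geq 0$ nor $\leq 0$. So if $a\in M_n(\bC)_{sa}$ and $a\otimes x\in M_n(\C)_+$, the reduction gives $\mu x\geq 0$ for each eigenvalue $\mu$ of $a$; but a nonzero real multiple of $x$ is positive only if $x\geq 0$ (when the scalar is positive) or $x\leq 0$ (when it is negative), both excluded. Hence every eigenvalue of $a$ vanishes, so $a = 0$ and $a\otimes x = 0$. This is $M_n(E)\cap M_n(\C)_+ = \{0\}$.

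For (ii), discarding the trivial case $x=0$ and replacing $x$ by $-x$ if necessary (which leaves $E$ unchanged, and turns $x_{+}=0$ into $x\geq 0$), I may assume $x\geq 0$ with $x\neq 0$. Then $\mu x\geq 0$ for $\mu\in\bR$ iff $\mu\geq 0$, so the reduction says $M_n(E)_+ = \{a\otimes x : a\in M_n(\bC)_+\}$. Given $y = a\otimes x\in M_n(E)_{sa}$, use the Jordan decomposition $a = a_{+} - a_{-}$ in $M_n(\bC)$, so that $a_{\pm}\in M_n(\bC)_+$, $a_{+} a_{-} = 0$, and $\|a\| = \max\{\|a_{+}\|,\|a_{-}\|\}$. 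Put $y_1 := a_{+}\otimes x$ and $y_2 := a_{-}\otimes x$; then $y_1,y_2\in M_n(E)_+$, $y = y_1 - y_2$, and by the norm identity $\|y_i\| = \|a_i\|\,\|x\|$ while $\|y\| = \|a\|\,\|x\|$, whence $\max\{\|y_1\|,\|y_2\|\} = \|y\|$. In particular, whenever $\|y\|\leq 1$ we get $\|y_1\|,\|y_2\|\leq 1$, i.e. $(M_n(E)_{sa})_1\subseteq (M_n(E)_+)_1 - (M_n(E)_+)_1$ for all $n$, so $E$ is completely $1$-generated. I do not expect a genuine obstacle here: the only non-routine ingredient is the positivity reduction of the first paragraph, and the rest is bookkeeping with $M_n(E) = M_n(\bC)\otimes x$ together with the standard facts that the Jordan decomposition in $M_n(\bC)$ preserves the norm and that $\|a\otimes x\| = \|a\|\,\|x\|$.
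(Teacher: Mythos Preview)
Your proof is correct and proceeds along the same lines as the paper's: both exploit the identification $M_n(E)=M_n(\bC)\otimes x$ and the Jordan decomposition of the scalar matrix $a$. The only technical difference is in part (i): you diagonalise $a$ unitarily to reduce $a\otimes x\geq 0$ to the scalar conditions $\mu_i x\geq 0$, whereas the paper instead uses that the Jordan decomposition of a tensor product satisfies $(a\otimes x)_-=a_+\otimes x_-+a_-\otimes x_+$, so that $(a\otimes x)_-=0$ forces $a_+\otimes x_-=a_-\otimes x_+=0$ and hence $a_\pm=0$. Your route is arguably more elementary in that it avoids this tensor-product identity; the paper's route is a line shorter and yields part (ii) for free, since when $x\geq 0$ one gets $(a\otimes x)_\pm=a_\pm\otimes x$ directly, and the norm equality follows from orthogonality of the Jordan parts rather than from the separate identity $\|a\otimes x\|=\|a\|\,\|x\|$ that you invoke.
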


\begin{proof}
(i) Suppose that $a \otimes x \geq 0$ in $M_n(\C)$, and so $(a \otimes x)_{-} = 0$.
Therefore we have 
\[
a_{+} \otimes x_{-} = 0 
\qand
a_{-} \otimes x_{+} = 0.
\]
Since $x_{\pm} \neq 0$, we get $a_{\pm} = 0$.
Hence $a=0$, and thus $a \otimes x = 0$.

\smallskip

\noindent
(ii) Without loss of generality assume that $x_{-} = 0$, since $\spn\{x\} = \spn\{-x\}$.
Suppose that $a \otimes x \in M_n(E)_{sa}$.
Since $x = x_+ \in E_+$ and $a = a^*$ we get
\[
(a \otimes x)_+ = a_{+} \otimes x \in M_n(E)_+
\qand
(a \otimes x)_{-} = a_{-} \otimes x \in M_n(E)_+.
\]
The norm equality follows from the properties of the decomposition.
\end{proof}

As a consequence of Theorem \ref{T:1apppos} and Lemma \ref{L:1dcon} we obtain the following.

\begin{corollary}\label{C:1dproper}
Let $\C$ be a C*-algebra and let $E= \spn\{x\}$ be the selfadjoint subspace generated by an element $x \in \C_{sa}$.
If $x_{+} = 0$ or $x_{-} = 0$, then the inclusion $E \subseteq \C$ is an embedding.
\end{corollary}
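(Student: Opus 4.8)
The plan is to deduce the statement directly from Lemma \ref{L:1dcon} and Theorem \ref{T:1apppos}: the one-sided positivity hypothesis on $x$ forces $E$ to have a rigid cone structure that places it in the class for which embeddings are automatic.

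First I would invoke Lemma \ref{L:1dcon}(ii). Under the assumption $x_+ = 0$ or $x_- = 0$, it provides for every $y \in M_n(E)_{sa}$ a decomposition $y = y_1 - y_2$ with $y_1, y_2 \in M_n(E)_+$ and $\max\{\|y_1\|, \|y_2\|\} = \|y\|$. In particular, if $\|y\| \le 1$ then $\|y_1\|, \|y_2\| \le 1$, so that
\[
(M_n(E)_{sa})_1 \subseteq (M_n(E)_+)_1 - (M_n(E)_+)_1 \foral n \in \bN,
\]
i.e.\ $E$ is completely $1$-generated. Since $(M_n(E)_+)_1 - (M_n(E)_+)_1 \subseteq \ol{(M_n(E)_+)_1 - (M_n(E)_+)_1}$, the space $E$ is also completely approximately $1$-generated.

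Next I would apply Theorem \ref{T:1apppos} to conclude that $E$ has the automatic embedding property; that is, every completely isometric complete order embedding of $E$ into a selfadjoint operator space is an embedding. A C*-algebra $\C$ is in particular a selfadjoint operator space, being a norm-closed selfadjoint subspace of $\B(H)$ for a faithful representation, and the inclusion $E = \spn\{x\} \hookrightarrow \C$ is by construction a completely isometric complete order embedding, since the matrix norms and matrix cones on $E$ are exactly those inherited from $\C$. Hence the automatic embedding property applies and yields that $E \subseteq \C$ is an embedding.

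I do not expect a real obstacle: all of the substance is already in Lemma \ref{L:1dcon}(ii) (where the norm-controlled positive decomposition is produced by hand from $x = x_+$ via functional calculus) and in Theorem \ref{T:1apppos} (which converts complete $1$-generation into the automatic embedding property through Corollary \ref{C:decomp} and Lemma \ref{L:qsposcon}). The only care needed is the routine bookkeeping that complete $1$-generation implies complete approximate $1$-generation, and the observation that a C*-algebra is a legitimate target for the automatic embedding property.
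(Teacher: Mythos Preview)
Your proof is correct and follows exactly the route the paper takes: the paper simply states that the corollary is a consequence of Lemma \ref{L:1dcon} and Theorem \ref{T:1apppos}, and you have spelled out precisely how those two results combine. The only added detail on your part is the (trivial) observation that complete $1$-generation implies complete approximate $1$-generation, which is implicit in the paper's one-line justification.
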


We next prove that inclusions of one-dimensional generated selfadjoint spaces allow for completely positive extensions (that may not preserve the norm). 
We first need the following remark.

\begin{remark}\label{R:autbound}
Let $x$ be in $\B(H)$ and let $E=\spn\{x\}$. 
Then every  linear map $\phi \colon E \to \B(K)$ is automatically completely bounded with $\|\phi\|_{\rm cb} = \|\phi\| =\|x\|^{-1} \|\phi(x)\|$.
For the remainder of this section, we will not use the term (completely) bounded when we refer to linear maps defined on one-dimensional operator spaces.
\end{remark}

\begin{proposition}\label{P:1dposext}
Let $x\in \B(H)$ be a selfadjoint operator and set $E := \spn\{x\}$. 
Then, every completely positive map $\phi \colon E \to \B(K)$ extends to a completely positive map $\wt{\phi} \colon \ca(E) \to \B(K)$, for every $n \in \bN$. 
In particular, the following hold: 
\begin{enumerate}
\item If $ x_{+}=0$ or $ x_-=0$, then there is such an extension $\wt{\phi}$ with
\[
\|\wt{\phi}\|_{\rm cb}= \|\phi\|_{\rm cb}.
\]
\item If $ x_{\pm}\neq 0$,  then there is such an extension $\wt{\phi}$ with
\[
\|\phi\|_{\rm cb} \;\le\; \|\wt{\phi}\|_{\rm cb} \;\le\; \frac{\|x\|}{\min\{\|x_+\|,\|x_-\|\}} \, \|\phi\|_{\rm cb}.
\]
\end{enumerate}
\end{proposition}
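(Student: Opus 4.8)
The plan is to identify $\ca(E) = \cstar(x)$ with $C_0(\si(x)\setminus\{0\})$ via the functional calculus, so that the generator $x$ corresponds to the inclusion function $\io\colon t\mapsto t$. Since $E=\spn\{x\}$, a linear map $\phi\colon E\to\B(K)$ is determined by the single operator $T:=\phi(x)$; if $\phi$ is completely positive then $T=T^*$ (it being in particular selfadjoint), and moreover $T\geq 0$ in case (i), since there $x\in E_+$. The point to keep in mind is that $\ca(E)$ is commutative, so a merely positive extension of $\phi$ would not suffice — one must produce an honest completely positive map. I will assemble it by hand out of characters of $\cstar(x)$ together with the positive decompositions of $T$ and of $x$, rather than from a Hahn--Banach argument.

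After disposing of the trivial case $x=0$, and in case (i) reducing to $x\geq 0$ via $\spn\{x\}=\spn\{-x\}$, I would write the extension explicitly. In case (i), using $\|x\|=\max\si(x)\in\si(x)\setminus\{0\}$, set
\[
\wt\phi(f):=\frac{f(\|x\|)}{\|x\|}\,T \qquad (f\in\cstar(x));
\]
this is the character $\ev_{\|x\|}$ followed by the completely positive map $\bC\to\B(K)$, $z\mapsto \tfrac{z}{\|x\|}T$ (completely positive since $T\geq 0$), hence completely positive, and $\wt\phi(\io)=T$, so $\wt\phi|_E=\phi$. In case (ii), since $x_\pm\neq 0$ both extreme points $\|x_+\|=\max\si(x)$ and $-\|x_-\|=\min\si(x)$ lie in $\si(x)\setminus\{0\}$, and I set
\[
\wt\phi(f):=\frac{f(\|x_+\|)}{\|x_+\|}\,T_+ \;+\; \frac{f(-\|x_-\|)}{\|x_-\|}\,T_-,
\]
a sum of two completely positive maps — each a character of $\cstar(x)$ composed with a completely positive map $\bC\to\B(K)$ — hence completely positive; evaluating at $\io$ gives $\wt\phi(x)=T_+-T_-=T$, so again $\wt\phi|_E=\phi$.

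For the norm bounds, recall $\|\phi\|_{\rm cb}=\|x\|^{-1}\|T\|$ from Remark \ref{R:autbound}, and that $\|\wt\phi\|_{\rm cb}=\|\wt\phi\|$ for the completely positive map $\wt\phi$. The inequality $\|\phi\|_{\rm cb}\leq\|\wt\phi\|_{\rm cb}$ is automatic because $\wt\phi$ extends $\phi$. For the upper bounds I would compute $\|\wt\phi\|$ directly: in case (i) a Urysohn function shows $\|\wt\phi\|=\|x\|^{-1}\|T\|=\|\phi\|_{\rm cb}$, which is the claimed equality. In case (ii) the orthogonality $T_+T_-=0$ forces $\ol{\ran T_+}\perp\ol{\ran T_-}$, whence $\|\al T_+ + \be T_-\| = \max\{|\al|\,\|T_+\|,\,|\be|\,\|T_-\|\}$ for scalars $\al,\be$; together with $|f(\|x_+\|)|,|f(-\|x_-\|)|\leq\|f\|_\infty$ this yields
\[
\|\wt\phi\| = \max\Big\{\tfrac{\|T_+\|}{\|x_+\|},\,\tfrac{\|T_-\|}{\|x_-\|}\Big\} \leq \frac{\max\{\|T_+\|,\|T_-\|\}}{\min\{\|x_+\|,\|x_-\|\}} = \frac{\|x\|}{\min\{\|x_+\|,\|x_-\|\}}\,\|\phi\|_{\rm cb},
\]
using $\|T\|=\max\{\|T_+\|,\|T_-\|\}$ and $\|x\|=\max\{\|x_+\|,\|x_-\|\}$ (again by orthogonality of the positive and negative parts).

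I expect the main obstacle to be conceptual rather than computational: one must recognise that completely positive maps out of the commutative C*-algebra $\ca(E)$ cannot be obtained by extending a functional and symmetrising, so the extension has to be built by hand from point evaluations and the positive pieces of $T$; and one must check that the spectral values $\|x\|$, $\|x_+\|$, $-\|x_-\|$ genuinely lie in $\si(x)\setminus\{0\}$ so that evaluation there defines a legitimate character.
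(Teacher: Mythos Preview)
Your proof is correct and, once the paper's operator-valued measure is unwound, coincides with it in case (ii): computing the paper's $Q(\{M\})$ and $Q(\{m\})$ gives precisely $T_+/M$ and $T_-/|m|$, so the paper's $\Phi(f)=f(m)Q(\{m\})+f(M)Q(\{M\})$ is exactly your formula. Your presentation is more direct --- you write the extension immediately as a sum of two maps of the form (character)$\circ$(positive map into $\B(K)$), bypassing the operator-valued measure language and the appeal to \cite[Theorem 3.11, Proposition 4.5]{Pau02}.

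The one genuine difference is in case (i): the paper invokes the embedding machinery (Corollary \ref{C:1dproper} says $E\subseteq\ca(E)$ is an embedding when $x_\pm=0$, and then Theorem \ref{T:arvemb} supplies the norm-preserving extension), while you give an explicit one-line formula. Your approach is more self-contained and makes case (i) visibly a degenerate instance of the same construction used in case (ii); the paper's approach illustrates the abstract theory in action. Both are valid.

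One small remark: your justification that $\|\al T_++\be T_-\|=\max\{|\al|\|T_+\|,|\be|\|T_-\|\}$ from ``orthogonality of ranges'' is correct but terse --- strictly speaking you also need that each $T_\pm$, being positive, vanishes on the orthocomplement of its range, so that $\al T_++\be T_-$ decomposes as a direct sum with respect to $K=\ol{\ran T_+}\oplus\ol{\ran T_-}\oplus(\cdots)$. The paper is equally terse at this point.
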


\begin{proof}
Item (i) follows from Theorem \ref{T:arvemb} and Corollary \ref{C:1dproper}.
We now prove item (ii), so suppose that $ x_{\pm}\neq 0$.
Let $\phi\colon E \to \B(K)$ be a completely positive map. Denote by $\si(x)$ the spectrum of $x$ relatively to $\B(H)$, and set 
\[
m := \min \si(x) \qand M := \max \si(x).
\]
Since $ x_{\pm}\neq 0$ we have $m < 0 < M$.
Set
\[
T := \phi(x) \in B(K),
\]
and let $T = T_+ - T_-$ be its decomposition with $T_\pm \ge 0$ and $T_-T_+=T_+ T_- = 0$.  
Define
\[
Y := M^{-1} T_+ - m^{-1} T_- \in B(K)_+.
\]

Now let $\fB(\si(x))$ denote the Borel subsets of $\si(x)$ and define the map $Q \colon \fB(\si(x)) \to \B(K)$ such that
\[
Q(\{M\}) := \frac{T - mY}{M-m} \geq 0
\qand
Q(\{m\}) := \frac{MY - T}{M-m} \geq 0,
\]
and $Q(B)=0$ for every $B\in \fB(\si(x))$ with $B\cap \{m,M\}=\mt$.
Then $Q$ is a bounded regular and positive $\B(K)$-valued measure satisfying
\[
\mu_{\xi,\eta}
= \langle Q(\{m\})\xi,\eta\rangle\, \de_{m}
+ \langle Q(\{M\})\xi,\eta\rangle\, \de_{M}.
\]

Define the (completely) positive map  
\[
\Phi\colon {\rm C}(\si(x)) \to B(K); f \mapsto  \int_{\si(x)} f(t)\, dQ(t) = f(m) Q(\{m\}) + f(M) Q(\{M\})
\]
(see for example \cite[Theorem 3.11 and Proposition 4.5]{Pau02}).
Equivalently,
\[
\langle \Phi(f)\xi,\eta\rangle = \int f \, d\mu_{\xi,\eta} \foral \xi,\eta \in K.
\]
For the coordinate function $z$ we have 
\[
\Phi(z)
= m\,Q(\{m\}) + M\,Q(\{M\}) 
= m\,\frac{MY-T}{M-m} + M\,\frac{T-mY}{M-m}  
= T  =\phi(x).
\]
Hence $\Phi|_E=\phi$ after identifying $\ca(x,I_H)$ with ${\rm C}(\si(x))$. 

We now compute the norms. First note that  
\[
\Phi(1)=Q(\{m\})+ Q(\{M\})= \frac{T - mY}{M-m} + \frac{MY - T}{M-m}= Y,
\]
and thus $\|\Phi\|_{\rm cb} = \|\Phi(1)\| = \|Y\|$.
Since $T_+$ and $T_-$ have orthogonal ranges, we get
\[
\|\Phi\|_{\rm cb} = \|Y\| = \max\Big\{ \tfrac{\|T_+\|}{M}, \tfrac{\|T_-\|}{|m|} \Big\} \leq \frac{\|T\|}{\min\{M,|m|\}} \leq \frac{\|x\|}{\min\{M,|m|\}} \|\phi\|_{\rm cb}.
\]
By setting $\wt{\phi}:= \Phi|_{\ca(E)}$ we obtain a completely positive extension of $\phi$ such that
\[
\|\phi\|_{\rm cb}  \leq \|\wt{\phi}\|_{\rm cb} \leq \|\Phi\|_{\rm cb} \leq \frac{\|x\|}{\min\{\|x_+\|,\|x_-\|\}} \|\phi\|_{\rm cb},
\]
thus showing item (ii).
\end{proof}

\begin{remark}
With the setup of Proposition \ref{P:1dposext}, Remark \ref{R:Arv_cor} yields that, for every $n\in \bN$, every positive functional $\vphi\colon M_n(E) \to \bC$ admits a positive extension $\wt{\vphi}\colon M_n(\ca(E)) \to \bC$. 
However, the positive extensions do not always preserve the norm.

For such an example, let $x$ be a selfadjoint element in a C*-algebra $\C$ such that 
\[
0 < \min\{ \|x_{+}\|, \|x_{-}\| \} < \|x\|;
\]
equivalently, $x_{\pm} \neq 0$ and either $\|x\|\notin \si_{\C}(x)$ or $-\|x\|\notin \si_{\C}(x)$. 
Set $E := \spn\{x\} \subseteq \C$. 
Then
\[
\dist(x, E_+) = \| x\| = \dist(-x, E_+),
\]
as $E_{+}=\{0\}$.
By Proposition \ref{P:gaugeC*} we also have
\[
\dist(x, \C_+) = \| x_{+} \|
\qand
\dist(-x, \C_+) = \| x_{-} \|.
\]
Since $\min\{\|x_+\|,\|x_-\|\}<\|x\|$, we obtain that $E\subseteq \C$ is not a gauge maximal inclusion, which by Theorem \ref{T:equivemb} implies that there exists a $n\in \bN$ and a $\vphi \in {\rm CCP}(M_n(E), \bC)$ that does not extend to a completely contractive completely positive map $\wt{\vphi}\colon \ca(M_n(E)) \to \bC$.
\end{remark}

For the remainder of the section we focus in the case where $x \in \B(H)_{sa}$ with $x_{\pm} \neq 0$.
The only obstruction to having an embedding of $\spn\{x\} \subseteq \B(H)$ in this case, is the possible complete positive extension (of either and thus of both) of the maps
\[
\phi \colon \spn\{x\} \to \spn\{x\}; \phi(x) = -x
\qand
\vphi \colon \spn\{x\} \to \bC; \vphi(x) = -\|x\|.
\]
In view of item (i) of Lemma \ref{L:1dcon} these maps are completely contractive completely positive on $\spn\{x\}$.

\begin{theorem} \label{T:1d}
Let $x\in \B(H)$ be a selfadjoint operator such that $x_{\pm} \neq 0$ and set $E := \spn\{x\}$.
Then the following are equivalent:
\begin{enumerate}
\item The inclusion of $\spn\{x\}$ inside $\ca(E)$ is an embedding.
\item $\|x_{+}\| = \|x_{-}\|$.
\item The map $\phi \colon \spn\{x\} \to \spn\{x\}\subseteq \B(H)$ with $\phi(x) = -x$ has a completely contractive completely positive extension $\wt{\phi} \colon \ca(E) \to \B(H)$.
\item The functional $\vphi \colon \spn\{x\} \to \bC$ with $\vphi(x) = -\|x\|$ has a contractive positive extension $\wt{\vphi} \colon \ca(E) \to \bC$.
\end{enumerate}
\end{theorem}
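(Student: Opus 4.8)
The plan is to run the cycle $(\mathrm{ii})\Rightarrow(\mathrm{i})\Rightarrow(\mathrm{iii})\Rightarrow(\mathrm{ii})$ and, separately, $(\mathrm{i})\Rightarrow(\mathrm{iv})\Rightarrow(\mathrm{ii})$. The standing facts I would lean on are: since $x_{\pm}\neq 0$, Lemma \ref{L:1dcon}(i) gives $M_n(E)_+=\{0\}$ for every $n$, so $\dist(y,M_n(E)_+)=\|y\|$ for all $y\in M_n(E)_{sa}$; and Proposition \ref{P:gaugeC*} gives $\dist(y,M_n(\ca(E))_+)=\|y_-\|$ and $\dist(y,-M_n(\ca(E))_+)=\|y_+\|$.

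For $(\mathrm{i})\Leftrightarrow(\mathrm{ii})$ I would invoke Theorem \ref{T:equivemb}: $E\subseteq\ca(E)$ is an embedding iff it is a gauge maximal inclusion, i.e.\ iff $\|y\|=\|y_-\|$ for all $y\in M_n(E)_{sa}$ and all $n$. A general such $y$ is $C\otimes x$ with $C=C^{*}\in M_n(\bC)$; decomposing $C=C_+-C_-$, $x=x_+-x_-$, one checks that $C_+\otimes x_+ + C_-\otimes x_-$ and $C_+\otimes x_- + C_-\otimes x_+$ are positive with zero product, so by uniqueness of the Jordan decomposition $(C\otimes x)_-=C_+\otimes x_-+C_-\otimes x_+$, whence $\|(C\otimes x)_-\|=\max\{\|C_+\|\,\|x_-\|,\ \|C_-\|\,\|x_+\|\}$, while $\|C\otimes x\|=\|C\|\,\|x\|$. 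Hence gauge maximality at every level is equivalent to $\|C\|\,\|x\|=\max\{\|C_+\|\,\|x_-\|,\ \|C_-\|\,\|x_+\|\}$ for every $C$; specialising to $C=\pm 1$ forces $\|x\|=\|x_+\|=\|x_-\|$, and conversely $\|x_+\|=\|x_-\|=\|x\|$ makes the right side equal $\|C\|\,\|x\|$. (One can also phrase this via Corollary \ref{C:spec} together with $\si_{M_n(\ca(E))}(C\otimes x)=\si(C)\cdot\si(x)$.)

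The implications $(\mathrm{i})\Rightarrow(\mathrm{iii})$ and $(\mathrm{i})\Rightarrow(\mathrm{iv})$ are then immediate from Theorem \ref{T:arvemb}, because $\phi$ and $\vphi$ are completely contractive completely positive on $E$: positivity is automatic since $E_+=\{0\}$, and by Remark \ref{R:autbound} the cb-norm equals the ordinary norm, which is $1$ in both cases; so if the inclusion is an embedding they extend to completely contractive completely positive $\wt\phi\colon\ca(E)\to\B(H)$ and $\wt\vphi\colon\ca(E)\to\bC$. Conversely, for $(\mathrm{iii})\Rightarrow(\mathrm{ii})$: for each $p\in\ca(E)_+$ one has $\wt\phi(p)\geq 0$ and $\|x-\wt\phi(p)\|=\|\wt\phi(x+p)\|\leq\|x+p\|$; since $\|x-\wt\phi(p)\|\geq\dist(x,\B(H)_+)=\|x_-\|$, taking the infimum over $p$ gives $\|x_-\|\leq\dist(x,-\ca(E)_+)=\|x_+\|$, and applying the same argument to the generator $-x$ of $E=\spn\{-x\}$ (for which $\phi$ is again the negation map) gives $\|x_+\|\leq\|x_-\|$. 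For $(\mathrm{iv})\Rightarrow(\mathrm{ii})$: $x_{\pm}$ are continuous functions of $x$ vanishing at $0$, hence lie in $\ca(E)$, so $\wt\vphi(x_+)\geq 0$ by positivity and $\wt\vphi(x_-)\leq\|x_-\|$ by contractivity, giving $-\|x\|=\wt\vphi(x_+)-\wt\vphi(x_-)\geq-\|x_-\|$, i.e.\ $\|x\|=\|x_-\|$; the symmetric argument with $-x$ yields $\|x\|=\|x_+\|$.

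I expect the main obstacle to be the level-$n$ bookkeeping in $(\mathrm{i})\Leftrightarrow(\mathrm{ii})$: identifying the positive and negative parts of $C\otimes x$ and verifying the norm identity uniformly over all $n$ and all selfadjoint $C$. The other delicate point, in $(\mathrm{iii})\Rightarrow(\mathrm{ii})$ and $(\mathrm{iv})\Rightarrow(\mathrm{ii})$, is the use of the symmetry $\spn\{x\}=\spn\{-x\}$ to promote the one-sided estimates to the equality $\|x_+\|=\|x_-\|$.
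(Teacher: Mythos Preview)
Your cycle $(\mathrm{ii})\Rightarrow(\mathrm{i})\Rightarrow(\mathrm{iii})\Rightarrow(\mathrm{ii})$ is correct. The computation $(C\otimes x)_-=C_+\otimes x_-+C_-\otimes x_+$ gives a direct verification of gauge maximality at every matrix level, which is more explicit than the paper's route: the paper only argues $(\mathrm{i})\Rightarrow(\mathrm{ii})$ at level one, obtains $(\mathrm{ii})\Rightarrow(\mathrm{iii})$ from the norm bound in Proposition~\ref{P:1dposext}, and closes the loop via a diagonalisation argument for $(\mathrm{iv})\Rightarrow(\mathrm{i})$. Your distance argument for $(\mathrm{iii})\Rightarrow(\mathrm{ii})$ is also sound, and the symmetry $E=\spn\{-x\}$ is legitimate there because condition~(iii) is genuinely invariant under $x\leftrightarrow -x$ (the map $\phi$ is the same map on the same space).

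The gap is in $(\mathrm{iv})\Rightarrow(\mathrm{ii})$. Your first step correctly gives $\|x\|=\|x_-\|$, but the ``symmetric argument with $-x$'' does not go through: unlike (i)--(iii), condition~(iv) is \emph{not} invariant under $x\leftrightarrow -x$, since replacing $x$ by $-x$ turns the hypothesis into the existence of a contractive positive extension of the \emph{different} functional $x\mapsto\|x\|$, which you are not given. In fact the implication fails without an extra hypothesis: for $x=\diag(-1,r)\in\bC^2$ with $0<r<1$ the projection $(a,b)\mapsto a$ is a state on $\ca(E)\cong\bC^2$ extending $\vphi$, so (iv) holds, yet $\|x_+\|=r\neq 1=\|x_-\|$. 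The paper's proof has the same defect: it opens with ``without loss of generality $\|x\|=\|x_+\|$, otherwise consider $-x$ in the place of $x$'', a normalisation that is harmless for the symmetric conditions (i)--(iii) but changes what (iv) asserts. Both your argument and the paper's thus establish $(\mathrm{iv})\Rightarrow(\mathrm{ii})$ only under the tacit additional assumption $\|x_+\|\geq\|x_-\|$.
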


\begin{proof}
Without loss of generality let us assume that $\|x\| = \|x_{+}\| = 1$ by normalising, and using the fact that $\|x\| = \max\{ \|x_{+}\|, \|x_{-}\| \}$; otherwise consider $-x$ in the place of $x$.

\smallskip

\noindent
[(i) $\Rightarrow$ (ii)].
Suppose that the inclusion $E \subseteq \ca(E)$ is an embedding,.
Then we have
\[
\|x_{+}\| = \|x\| = \dist(x, E_+) = \dist(x, \ca(E)_+) = \|x_{-}\|
\]
where we used that $E_+ = \{0\}$ in the second equality, the equivalence of items (i) and (iii) of Theorem \ref{T:equivemb} in the third equality, and Proposition \ref{P:gaugeC*} in the last equality.

\smallskip

\noindent
[(ii) $\Rightarrow$ (iii)]. It follows from Proposition \ref{P:1dposext} (ii). 

\smallskip

\noindent
[(iii) $\Rightarrow$ (iv)]. Since 
\[
\|x\|=\|x_+\|\in \si_{\ca(E)}(x)
\]
by the functional calculus we may pick a state $\tau \colon \ca(E) \to \bC$ such that $\tau(x)= \|x\|$. 
By assumption $\phi$ has a completely contractive completely positive extension $\wt{\phi}$, and we set $\wt{\vphi}: =\tau \circ \wt{\phi}$ to obtain a contractive positive extension of $ \vphi$.

\smallskip

\noindent
[(iv) $\Rightarrow$ (i)]. In view of Theorem \ref{T:arvemb} we will show that any  completely contractive completely positive map $\psi \colon E \to M_n(\bC)$ has a completely contractive completely positive extension to $\ca(E)$.
Since $x$ is selfadjoint, then $\psi(x) \in M_n(\bC)_{sa}$ and there is a unitary $u \in M_n(\bC)$ such that
\[
u \psi(x) u^* = \diag\{d_1, \dots, d_n\}
\text{ such that }
d_i \in \bR \text{ and } |d_i| \leq 1 \foral i=1, \dots, n.
\]
By Lemma \ref{L:1dcon} we have $E_+=\{0\}$, and hence the map
\[
\psi_i \colon E \to \bC; \psi_i(x) := d_i
\qfor
i=1, \dots, n,
\]
is contractive and positive.
We will show that for every $i=1,\dots, n$ there exists a contractive positive map $\wt{\psi}_i \colon \ca(E) \to \bC$ that extends $\psi_i$.
With that in hand the map
\[
\wt{\psi} := \ad_{u^*} \circ (\oplus_{i=1}^n \wt{\psi}_i) \colon \ca(E) \to M_n(\bC)
\]
is completely contractive and completely positive on $\ca(E)$, and it extends $\psi$ since
\[
\wt{\psi}(x) = u^* \diag\{d_1, \dots d_n\} u = \psi(x).
\]

If $d_i \geq 0$, then choose a state $\tau$ of $\ca(E)$ such that $\tau(x) = \|x\| = 1$, and set $\wt{\psi}_i := d_i \tau$, which is contractive and positive on $\ca(E)$.
If $d_i \leq 0$, then set $\wt{\psi}_i := -d_i \wt{\vphi}$, which is contractive and positive on $\ca(E)$ by the assumption on the existence of $\wt{\vphi}$.
In both cases we have that $\wt{\psi}_i$ is an extension of $\psi_i$, $i=1, \dots, n$.
\end{proof}

\begin{example} \label{E:exemb} \cite[Example 2.14]{KKM23}
For $r\in (0,1]$, consider the space $E_r \subseteq M_2(\bC)$ generated by the matrix
\[
x_r = \begin{bmatrix} 1 & 0 \\ 0 & -r \end{bmatrix}.
\]
Then by Theorem \ref{T:1d} the inclusion $E_1 \subseteq M_2(\bC)$ is an embedding. 
Note that the map $\phi \colon x_1 \mapsto -x_1$ is implemented by the change of basis $e_1 \leftrightarrow e_2$, and the map $\vphi \colon x_1 \mapsto -1$ is implemented by the compression to the (2,2) entry.
\end{example}

We close with the following promised examples that shows that approximate positive generation of $E$, unitality of $\ca(E)$, and the embedding property of $E \subseteq \ca(E)$ are independent.

\begin{example}\label{E:tables}
Let us first present existence of examples in the following cases when $E \subseteq \ca(E)$ is an embedding.

\vspace{2pt}

\begin{center}
\begin{tabular}{| c | c | c |}
\hline
& \; $E$ is approx. pos. gen. \; & \; $E$ is not approx. pos. gen. \; \\ \hline
\; $\ca(E)$ is unital \; & (i) & (ii) \\  \hline
\; $\ca(E)$ is not unital \; & (iii) & (iv)  \\ \hline
\end{tabular}

\vspace{2pt}

Table 1. When $E \subseteq \ca(E)$ is an embedding.
\end{center}

\smallskip

\noindent
(i) Consider $E$ to be any operator system.

\smallskip

\noindent
(ii) Consider the selfadjoint operator spaces 
\[
E:= \{ f\in {\rm C}(\bT): f(z)= \al z + \be\bar z , \; \al, \be \in \bC \} \qand F := \{f \in {\rm C}(\bT): \int_{\bT} fdm=0 \}
\]  
of Examples \ref{E:torus} and \ref{E:nopositive_ext1}. 
Then $\ca(E) = \ca(F) = {\rm C}(\bT)$, and as shown therein the inclusion $E\subseteq {\rm C}(\bT)$ is an embedding, and $E$ is not separating for ${\rm C}(\bT)$ as $F\supseteq E$ is not separating for ${\rm C}(\bT)$. 
Hence, by Proposition \ref{P:sepequiv} we obtain that $E$ is also not approximately positively generated.  

\smallskip

\noindent
(iii) Consider $E = \S \otimes \K$ for an operator system $\S$ and the compact operators $\K \subseteq \B(\ell^2(\bN))$. 
Then $\ca(E)=\ca(\S)\otimes \K$, and by \cite[Proposition 2.5]{DKP25} we have that $E \subseteq \ca(\S)\otimes \K$ is an embedding.

\smallskip

\noindent
(iv) Consider $E \subseteq \K \subseteq \B(\ell^2(\bN)$ be the selfadjoint
operator space generated by the element
\[
x = x_+ - x_{-}
\, \text{ for } \,
x_+ = \sum_{n=0}^\infty \frac{1}{n + 1} p_{2n}
\, \text{ and } \,
x_{-} = \sum_{n=0}^\infty \frac{1}{n + 1} p_{2n +1}.
\]
Then $\ca(E)=c_0$ and since $x$ is a non-positive selfadjoint element with $\|x_+\| = \|x_{-}\|$, Theorem \ref{T:1d} implies that $E \subseteq c_0$ is an embedding.

\smallskip

We next present existence of examples in the following cases when $E \subseteq \ca(E)$ is not an embedding.

\vspace{2pt}

\begin{center}
\begin{tabular}{| c | c | c |}
\hline
& \; $E$ is approx. pos. gen. \; & \; $E$ is not approx. pos. gen. \; \\ \hline
\; $\ca(E)$ is unital \; & (v) & (vi) \\  \hline
\; $\ca(E)$ is not unital \; & (vii) & (viii)  \\ \hline
\end{tabular}

\vspace{2pt}

Table 2. When $E \subseteq \ca(E)$ is not an embedding.
\end{center}

\smallskip

\noindent
(v) The space $E$ of Example \ref{E:nonemb} shows that $E$ is positively generated, $\ca(E)=M_2(\bC)$ and thus unital, and $E \subseteq M_2(\bC)$ is not an embedding.

\smallskip

\noindent
(vi) Consider $E \subseteq M_2(\bC)$ be the selfadjoint operator space
generated by the element
\[
x = \begin{bmatrix} 1 & 0 \\ 0 & -1/2 \end{bmatrix}.
\]
Then $\ca(E)=\bC\oplus \bC$ and as shown in \cite[Example 2.14]{KKM23} and  \cite[Example 4.1]{Rus23}, the inclusion $E \subseteq \bC\oplus \bC$ is not an embedding.

\smallskip

\noindent
(vii) Let $E_1 \subseteq \B(H_1)$ be approximately positively generated, with $\ca(E_1)$ unital, and assume that the inclusion $E_1 \subseteq \ca(E_1)$ is not an embedding, for example as in case (v).
Let $E_2 \subseteq \B(H_2)$ be approximately positively generated, with $\ca(E_2)$ non-unital, for example as in case (iii).
Then the space $E = E_1 \oplus E_2$ is approximately positively generated, $\ca(E) = \ca(E_1) \oplus \ca(E_2)$ is not unital, and the inclusion $E \subseteq \ca(E)$ is not an embedding.

\smallskip

\noindent
(viii) Consider $E \subseteq \K \subseteq \B(\ell^2(\bN))$ be the selfadjoint operator space generated by the element
\[
x = x_+ - x_{-}
\, \text{ for } \,
x_+ = \sum_{n=0}^\infty \frac{1}{n + 1} p_{2n}
\, \text{ and } \,
x_{-} = \sum_{n=0}^\infty \frac{1}{n + 2} p_{2n +1}.
\]
Since $x$ is a non-positive selfadjoint element with $\|x_+\| \neq \|x_{-}\|$, Theorem \ref{T:1d} implies that $E \subseteq \ca(E)$ is not an embedding.
\end{example}

\section{Co-universality in different classes} \label{S:ext}

\subsection{Terminal objects}

We now  inspect the different classes of morphisms in the category of selfadjoint operator spaces and their terminal objects.
Let $E$ be selfadjoint operator space and let $\F$ be a subclass of the class
\[
\F_1 := \{\phi \colon E\to \B(K) : \text{$\phi$ completely isometric completely positive} \}.
\]
We say that \emph{$\F$ has a terminal object} if there exists a $j \in \F$ such that for every $\phi \in \F$ there exists a $*$-epimorphism $\Phi \colon \ca(\phi(E)) \to \ca(j(E))$ with $\Phi(\phi(x)) = j(x)$ for every $x\in E$.
If such a $j$ exists then $\ca(j(E))$ is unique up to canonical $*$-isomorphisms (i.e., $*$-isomorphisms that fix the copies of $E$). We will write $\partial \F$ for $\ca(j(E))$.
We consider the following subclasses of $\F_1$:
\begin{align*}
\F_2 & := \{\vthe \colon E\to \B(K) : \text{$\vthe$ completely isometric complete order embedding} \}; \\
\F_3 &:= \{\vthe \colon E \to \B(K) : \text{$\vthe$ embedding} \}.
\end{align*}

By definition $\F_3\subseteq \F_2 \subseteq \F_1$, and by \cite[Theorem 2.25]{CS21} we get that $(\partial \F_3, j_3)$ always exists and $\partial \F_3=\cenv(E)$. 
Blecher--Kirkpatrick--Neal--Werner show in \cite[Theorem 2.3]{BKNW07} that $(\partial \F_1, j_1)$ exists when $E$ is approximately positively generated. 
However in general $(\partial F_1, j_1)$ or $(\partial F_2, j_2)$ may not exist.
This is the case for $(\partial F_2, j_2)$ in Example \ref{E:F_2} (that has trivial positive cones), and the same arguments rule out the existence of $(\partial \F_1, j_1)$ as well.
For $\F_2$ we have the following proposition.

\begin{proposition}\label{P:clas}
Let $E$ be a selfadjoint operator space. 
Then the following are equivalent:
\begin{enumerate}
\item $\F_2 = \F_3$.
\item $\partial \F_2$ exists and there is a canonical $*$-isomorphism $\partial \F_2\cong \partial \F_3$.
\end{enumerate}
\end{proposition}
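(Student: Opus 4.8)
The plan is to prove the two implications separately: [(i) $\Rightarrow$ (ii)] is essentially formal, while [(ii) $\Rightarrow$ (i)] requires a short argument with Werner's unitisation functor.

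For [(i) $\Rightarrow$ (ii)], I would note that whether a pair $(\C, j)$ is a terminal object depends only on $\F$ as a class of maps. Hence, if $\F_2 = \F_3$, then the terminal object $(\cenv(E), j_3)$ of $\F_3$, which exists by \cite[Theorem 2.25]{CS21}, is automatically a terminal object of $\F_2$ as well. Since terminal objects are unique up to a canonical $*$-isomorphism fixing the copies of $E$, this shows that $\partial \F_2$ exists and that $\partial \F_2 \cong \partial \F_3$ canonically.

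For [(ii) $\Rightarrow$ (i)], since $\F_3 \subseteq \F_2$ always holds, it suffices to show $\F_2 \subseteq \F_3$. First I would record that the terminal map $j_2 \colon E \to \partial \F_2$ is itself an embedding: by hypothesis there is a $*$-isomorphism $\rho \colon \partial \F_2 \to \partial \F_3 = \cenv(E)$ with $\rho \circ j_2 = j_3$, so $j_2 = \rho^{-1} \circ j_3$ is the composition of the canonical embedding $j_3$ with a $*$-isomorphism, hence an embedding; in particular $j_2^\#$ is completely isometric. Next, given an arbitrary $\vthe \in \F_2$, set $\C := \ca(\vthe(E))$; the terminal property of $(\partial \F_2, j_2)$ provides a $*$-epimorphism $\Phi \colon \C \to \partial \F_2$ with $\Phi \circ \vthe = j_2$. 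Applying Werner's unitisation to the completely contractive completely positive map $\vthe$ and to the $*$-homomorphism $\Phi$ (\cite[Lemma 4.9]{Wer02}) yields unital completely positive maps $\vthe^\# \colon E^\# \to \C^\#$ and $\Phi^\# \colon \C^\# \to (\partial \F_2)^\#$, which are in particular complete contractions, satisfying $\Phi^\# \circ \vthe^\# = (\Phi \circ \vthe)^\# = j_2^\#$. For $x \in M_n(E^\#)$ one then gets
\[
\|x\| \ge \big\| (\vthe^\#)^{(n)}(x) \big\| \ge \big\| (\Phi^\#)^{(n)}\big( (\vthe^\#)^{(n)}(x) \big) \big\| = \big\| (j_2^\#)^{(n)}(x) \big\| = \|x\|,
\]
so $(\vthe^\#)^{(n)}$ is isometric for every $n$; hence $\vthe$ is an embedding, i.e. $\vthe \in \F_3$, and $\F_2 = \F_3$.

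The only points needing care — and the main (mild) obstacle — are the bookkeeping facts about unitisation: that $\psi \mapsto \psi^\#$ is functorial on completely contractive completely positive maps (immediate from $\psi^\#(x,a) = (\psi(x),a)$); that the unitisation of a $*$-homomorphism of possibly non-unital C*-algebras is again a unital $*$-homomorphism, hence a complete contraction, which is where Corollary \ref{C:C*-unit} enters; and that a unital completely positive map between operator systems is a complete contraction, which is what makes the displayed chain of inequalities collapse. One should also double-check, as used above, that $j_2 = \rho^{-1} \circ j_3$ inherits the embedding property from $j_3$, which follows since $(\rho^{-1})^\#$ is a $*$-isomorphism and $j_3^\#$ is completely isometric.
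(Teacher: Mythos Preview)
Your proof is correct, but the converse direction [(ii) $\Rightarrow$ (i)] follows a genuinely different route from the paper's. You work directly with the definition of an embedding: once you know $j_2^\# = \Phi^\# \circ \vthe^\#$ with $j_2^\#$ completely isometric and both factors unital completely positive (hence completely contractive), a norm squeeze forces $\vthe^\#$ to be completely isometric. The paper instead uses the gauge-maximal characterisation of embeddings from Theorem \ref{T:equivemb}: it shows that $\dist(\vthe^{(n)}(x), -M_n(\ca(\vthe(E)))_+)$ equals $\dist(j_3^{(n)}(x), -M_n(\partial\F_3)_+)$ by squeezing the distance between an upper bound coming from $(\Phi\circ\pi)^{(n)}$ and a lower bound coming from the isometry of $\vthe^{(n)}$, and then invokes the equivalence of embedding with gauge maximality. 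Your argument is arguably more elementary, needing only functoriality of $(\cdot)^\#$ and the fact that ucp maps are complete contractions, whereas the paper's argument ties the result back into the distance-formula machinery developed in Section \ref{S:emb}; on the other hand, the paper's version makes the geometric content (preservation of distances to cones) explicit.
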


\begin{proof}
If $\F_2=\F_3$, then it is immediate that $\partial \F_2=\partial \F_3$. 
For the converse, let $\Phi \colon \partial \F_2 \to \partial \F_3$ be a $*$-isomorphism such that $\Phi(j_2(x))=j_3(x)$ for every $x\in E$.
Suppose that $\vthe \in \F_2$ and we will show that $\vthe \in \F_3$. 
By the definition of $(\partial \F_2, j_2)$ let $\pi \colon \ca(\vthe(E)) \to \partial \F_2$ be a $*$-epimorphism such that $\pi(\vthe(x))=j_2(x)$ for every $x\in E$. 
For any $x\in M_n(E)_{sa}$ we have 
\begin{align*}
\dist(j_3^{(n)}(x), - M_n(\partial \F_3)_+)
& =
\inf\{\|x+p\| \colon p\in M_n(E)_+\} \\
& = 
\inf\{\|\vthe^{(n)}(x)+\vthe^{(n)}(p)\| \colon p\in M_n(E)_+\}\\
& \geq 
\dist(\vthe^{(n)}(x), - M_n(\ca(\vthe(E)))_+),
\end{align*}
where in the first equality we use Theorem \ref{T:equivemb} and that $j_3$ is an embedding. 
Moreover, we have
\begin{align*}
\dist(j_3^{(n)}(x), - M_n(\partial \F_3)_+)
& =
\inf\{\|(\Phi\circ\pi)^{(n)}(\vthe^{(n)}(x))+p\| \colon p\in M_n(\partial \F_3)_+\}\\
& \leq
\inf\{\|(\Phi\circ\pi)^{(n)}(\vthe^{(n)}(x))+(\Phi\circ\pi)^{(n)}(p)\| \colon p\in M_n(\ca(\vthe(E)))_+\}\\
& \leq
\dist(\vthe^{(n)}(x), - M_n(\ca(\vthe(E)))_+).
\end{align*}
Hence $\vthe$ is a gauge maximal isometry, and by Theorem \ref{T:equivemb} it is an embedding.
\end{proof}

Although approximate positive generation implies the existence of $\partial F_1$, it does not imply a canonical identification with $\partial F_3$.

\begin{example}
Let $E\subseteq M_2(\bC)$ be the selfadjoint operator space of Example \ref{E:nonemb}. 
By \cite[Theorem 2.3]{BKNW07} we have that $\partial \F_1$ exists since $E$ is positively generated. 
We will prove that there is no canonical $*$-isomorphism between $\partial \F_1$ and $\partial \F_3$.

Indeed, suppose that there is such a $*$-isomorphism, say $\Phi \colon \partial \F_1 \to \partial \F_3$ and let $\vphi\colon E\to \bC$ be the contractive and positive functional defined in Example \ref{E:nonemb} that does not admit a contractive positive extension on $M_2(\bC)$.
By the defining property of $\partial \F_1$ there exists a $*$-epimorphism $\pi \colon M_2(\bC) \to \partial \F_1$ such that $\pi(x)=j_1(x)$ for all $x\in E$.
The map $j_3\colon E\to \partial \F_3$ is an embedding and hence by Theorem \ref{T:equivemb} we may pick a contractive and positive map $\psi \colon \partial \F_3\to \bC$ that extends $\vphi \circ j_3^{-1}$. 
Setting $\wt{\vphi}:=\psi\circ \Phi \circ \pi$ gives a contractive and positive extension of $\vphi$ on $M_2(\bC)$, which is a contradiction.

Note also that even if $\partial \F_2$ exists then there is no canonical $*$-isomorphism between $\partial \F_2$ and $\partial \F_3$. 
This follows by Proposition \ref{P:clas} as in this case the inclusion $E\subseteq \ca(E)$ would be in $ \F_2$ and not in $\F_3$.
\end{example}

As a corollary of Theorem \ref{T:1apppos} we obtain the following.

\begin{corollary}\label{C:F_2}
Let $E$ be a completely approximately 1-generated selfadjoint operator space.
Then $\partial \F_2$ exists and $\partial \F_2 = \partial \F_3$.
\end{corollary}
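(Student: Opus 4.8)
The plan is to derive this directly from Theorem \ref{T:1apppos} together with Proposition \ref{P:clas}. First I would unwind the definitions: by Theorem \ref{T:1apppos}, since $E$ is completely approximately 1-generated, it has the automatic embedding property, i.e., every completely isometric complete order embedding $\vthe \colon E \to F$ into a selfadjoint operator space $F$ is automatically an embedding. Applying this with $F = \B(K)$ — which is itself a selfadjoint operator space — gives that every $\vthe \in \F_2$ lies in $\F_3$. The reverse containment $\F_3 \subseteq \F_2$ is immediate from the definitions, so $\F_2 = \F_3$.

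Next I would invoke the implication [(i) $\Rightarrow$ (ii)] of Proposition \ref{P:clas}: from $\F_2 = \F_3$ we obtain that $\partial \F_2$ exists and that there is a canonical $*$-isomorphism $\partial \F_2 \cong \partial \F_3$. Since $\partial \F_3 = \cenv(E)$ always exists by \cite[Theorem 2.25]{CS21}, the existence assertion in the statement is automatic once this identification is in place, and we conclude $\partial \F_2 = \partial \F_3$.

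The proof is therefore a short composition of two already-established results, and there is no real obstacle: the only point requiring a moment's care is the (trivial) observation that the maps constituting $\F_2$, which are phrased with codomain $\B(K)$, fall under the scope of the automatic embedding property, which is stated for embeddings into arbitrary selfadjoint operator spaces $F$. As $\B(K)$ is a selfadjoint operator space, Lemma \ref{L:red} already records that checking the Hilbertian case suffices, so nothing further is needed.
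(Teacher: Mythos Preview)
Your proposal is correct and follows exactly the route the paper intends: the corollary is stated immediately after Theorem \ref{T:1apppos} with no written proof, and the implicit argument is precisely the one you spell out—use the automatic embedding property to get $\F_2=\F_3$, then apply Proposition \ref{P:clas}.
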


In particular for operator systems we have the following corollary.

\begin{corollary}
Let $\S$ be an operator system. 
Then $\partial \F_1 \cong \partial \F_2 \cong \partial \F_3\cong \cenv(\S)$ via canonical $*$-isomorphisms.
\end{corollary}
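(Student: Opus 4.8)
The plan is to show that for an operator system $\S$ the three classes coincide, $\F_1=\F_2=\F_3$, so that their terminal objects are literally the same object; combined with $\partial\F_3=\cenv(\S)$ and the fact (recalled from \cite[Theorem 2.25]{CS21}) that the C*-envelope of $\S$ as a selfadjoint operator space agrees with its C*-envelope as an operator system, this yields the statement, and as a byproduct reproves the existence of $\partial\F_1$ from \cite[Theorem 2.3]{BKNW07} in this case. Since $\F_3\subseteq\F_2\subseteq\F_1$ by definition, the work is in the two reverse inclusions.

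For $\F_2\subseteq\F_3$ I would use that an operator system is completely $1$-generated, as noted in the proof of Corollary \ref{C:emb}: for $x\in(M_n(\S)_{sa})_1$ one has $x=\tfrac12(e_n+x)-\tfrac12(e_n-x)$ with both summands in $(M_n(\S)_+)_1$, where $e_n$ is the unit of $M_n(\S)$. Hence $\S$ is completely approximately $1$-generated, so by Theorem \ref{T:1apppos} it enjoys the automatic embedding property, i.e. every completely isometric complete order embedding of $\S$ is an embedding; equivalently one may quote $\partial\F_2=\partial\F_3$ directly from Corollary \ref{C:F_2}.

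The hard part is $\F_1\subseteq\F_2$: every completely isometric, completely positive $\phi\colon\S\to\B(K)$ must in fact be a complete order embedding. Since $\phi$ completely isometric implies $M_n(\phi)\colon M_n(\S)\to\B(K^{(n)})$ is completely isometric and completely positive on the operator system $M_n(\S)$ for every $n$, it suffices to prove reflection of positivity at the first level. So let $x=x^*\in\S$ with $\phi(x)\ge 0$ (the case $x=0$ being trivial), put $T:=\phi(e)$ with $e$ the unit of $\S$; then $\|T\|=\|e\|=1$ because $\phi$ is isometric, and $T\ge 0$ because $\phi$ is positive. From $x\le\|x\|e$ and positivity of $\phi$ we get $\phi(x)\le\|x\|T$, hence $0\le\|x\|T-\phi(x)\le\|x\|T$ and so $\|\,\|x\|T-\phi(x)\,\|\le\|x\|\,\|T\|=\|x\|$; but $\phi$ is isometric, so $\|\,\|x\|T-\phi(x)\,\|=\|\phi(\|x\|e-x)\|=\|\,\|x\|e-x\,\|$, whence $\|\,\|x\|e-x\,\|\le\|x\|$. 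In an operator system this forces $x\ge 0$, since in particular it gives $\|x\|e-x\le\|x\|e$, i.e. $x\ge 0$. Thus $\phi$ reflects positivity at each matrix level and, being injective as a complete isometry, is a complete order embedding. Therefore $\F_1=\F_2=\F_3$, so $\partial\F_1=\partial\F_2=\partial\F_3=\cenv(\S)$ and these are in particular canonically $*$-isomorphic. I expect the only point needing care to be this last inclusion — specifically, using that $\phi$ is genuinely isometric rather than merely completely contractive in the norm estimate, which is exactly what prevents a non-unital completely positive complete isometry on an operator system from failing to be a complete order embedding.
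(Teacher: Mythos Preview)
Your proof is correct. For $\partial\F_2=\partial\F_3$ you use the same mechanism as the paper (complete $1$-generation via Theorem~\ref{T:1apppos} / Corollary~\ref{C:F_2}), and for $\partial\F_3=\cenv(\S)$ both you and the paper invoke \cite[Theorem~2.25]{CS21}.

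The difference lies in the treatment of $\partial\F_1$. The paper simply cites ``the comments following \cite[Theorem~2.3]{BKNW07}'' to identify $\partial\F_1$ with $\cenv(\S)$, whereas you prove the stronger set-theoretic equality $\F_1=\F_2$ by a direct, self-contained argument: for a completely isometric completely positive $\phi$ on an operator system, the chain $0\le\|x\|\phi(e)-\phi(x)\le\|x\|\phi(e)$ combined with isometry yields $\|\,\|x\|e-x\,\|\le\|x\|$, hence $x\ge0$. This buys you an elementary proof that avoids the external reference, reproves the existence of $\partial\F_1$ in this case, and makes transparent \emph{why} nothing is lost in passing from $\F_1$ to $\F_2$ for operator systems --- precisely because the unit witnesses positivity through the norm. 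The paper's route is shorter on the page but leans on \cite{BKNW07}; yours is more informative.
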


\begin{proof}
Since $\S$ is completely 1-generated, the result follows from Corollary \ref{C:F_2} combined with \cite[Theorem 2.25 (ii)]{CS21} and the comments following \cite[Theorem 2.3]{BKNW07}.
\end{proof}

\subsection{Hyperrigidity}

The hyperrigidity property was introduced by Arveson \cite{Arv11} for a generating set $\G$ in a C*-algebra $\ca(\G)$.
A set $\G$ is called \emph{hyperrigid} if for every faithful non-degenerate $*$-representation $\Phi \colon \ca(\G) \to \B(K)$ and every sequence of unital completely positive maps $\phi_n \colon \B(K) \to \B(K)$ the following holds:
\[
\lim_n \|\phi_n(\Phi(g)) - \Phi(g)\|=0 \foral g \in \G
\Longrightarrow
\lim_n \|\phi_n(\Phi(c)) - \Phi(c)\| = 0 \foral c \in \ca(\G).
\]

Arveson \cite{Arv11} shows that if an operator system $\S$ is hyperrigid inside the C*-algebra $\ca(\S)$ it generates, then $\ca(\S)$ is canonically $*$-isomorphic with $\cenv(\S)$. 
It appears that having an embedding, together with either the separating property or the unitality of the generated C*-algebra, is sufficient.

\begin{proposition} \label{P:embd_plus_hyper}
Let $E\subseteq \ca(E)$ be a selfadjoint operator space that is hyperrigid in $\ca(E)$.
Suppose that the inclusion $E\subseteq \ca(E)$ is an embedding. Then $\cenv(E)\cong \ca(E)$ by a canonical $*$-isomorphism if at least one of the following conditions is satisfied:
\begin{enumerate}
\item $E$ is separating for $\ca(E)$.
\item $\ca(E)$ is unital.
\end{enumerate}
\end{proposition}

\begin{proof}
Let $(\cenv(E),j)$ denote the C*-envelope of $E$. 
Since the inclusion $E\subseteq \ca(E)$ is an embedding, by the defining property of $\cenv(E)$ we obtain a $*$-epimorphism $\Phi \colon \ca(E)\to \cenv(E)$ satisfying $\Phi(x)=j(x)$ for every $x\in E$.

\smallskip

\noindent
(i) Assume that $E$ is separating for $\ca(E)$.
Since $E\subseteq \ca(E)$ is an embedding, we may identify $E^\#$ with $E+ \bC 1_{\ca(E)^\#}$ inside $\ca(E)^\#$. 
Since $E$ is separating for $\ca(E)$, by \cite[Proposition 3.20]{DKP25} we have that $E^\#$ is hyperrigid in $\ca(E)^\#$. 
Consequently, we obtain a canonical $*$-isomorphism $\cenv(E^\#)\cong \ca(E)^\#$. 
The proof of \cite[Theorem 2.25]{CS21} implies that $\cenv(E)$ is $*$-isomorphic to the C*-algebra generated by the copy of $E$ inside $\cenv(E^\#)$, and hence the injective $*$-homomorphism
\[
\Psi \colon \cenv(E) \hookrightarrow \cenv(E^\#)\cong \ca(E)^\#
\]
satisfies $\Psi(j(x))=(x,0)$ for every $x\in E$, which implies that the  range of $\Psi$ is $\ca(E)$. 
We conclude that $\Psi$ and $\Phi$ are mutual inverses and thus $\Phi$ is a $*$-isomorphism.

\smallskip

\noindent
(ii) Assume that $\ca(E)$ is unital.  Let $ \pi \colon \ca(E) \to \B(K)$ be a faithful unital $*$-representation.
Since $E$ is hyperrigid, then $\pi$ has the unique extension property by  \cite[Theorem 3.10, Theorem 3.11]{DKP25}. 
Consider the completely contractive completely positive map $\pi \circ j^{-1} \colon j(E) \to \B(K)$. 
By Theorem \ref{T:arvemb} we may extend it to a completely contractive completely positive map
\[
\phi\colon \cenv(E) \to \B(K)  \text{ so that } \phi \circ j = \pi|_E. 
\]
On the other hand $\phi \circ j = \phi \circ \Phi|_E$, and by the unique extension property of $\pi $ we obtain $\phi \circ \Phi = \pi$.
Since $ \pi $ is injective we conclude that $\Phi$ is also injective, and thus a $*$-isomorphism. 
\end{proof}

Consequently we have the following corollary.

\begin{corollary} \label{C:emb_apr_hyper}
Let $E\subseteq \ca(E)$ be a selfadjoint operator space that is hyperrigid in $\ca(E)$.
Then $\cenv(E)\cong \ca(E)$ by a canonical $*$-isomorphism in the following cases:
\begin{enumerate}
\item $\A \subseteq E$ is a C*-algebra such that $\A \cdot E \subseteq E$ and there is an approximate unit $(e_\la)_\la \subseteq \A$ such that $\lim_\la e_\la x = x$ for all $x \in E$.
\item $E$ contains a matrix norm-defining approximate order unit $(e_\la)_\la$ for $E$.
\item $E$ is completely approximately 1-generated.
\end{enumerate}
\end{corollary}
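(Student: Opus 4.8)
The plan is to derive all three statements from Proposition \ref{P:embd_plus_hyper}(i). Since $E$ is assumed hyperrigid in $\ca(E)$, in each case it will be enough to verify the two remaining hypotheses of that proposition, namely that the inclusion $E \subseteq \ca(E)$ is an embedding and that $E$ is separating for $\ca(E)$; the canonical $*$-isomorphism $\cenv(E) \cong \ca(E)$ then follows at once.

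In case (a), the embedding property is \cite[Proposition 2.5]{DKP25} (recorded in the example preceding Lemma \ref{L:orderunit}). For the separating property I would first promote the approximate unit $(e_\la)_\la \subseteq \A \subseteq E$ of $\A$ to an approximate unit of $\ca(E)$: because $\|e_\la\| \leq 1$ and $\lim_\la e_\la x = x$ for all $x \in E$, continuity of multiplication on bounded sets gives $\lim_\la e_\la c = c$ for every product of elements of $E$, hence for every $c \in \ca(E)$, and passing to adjoints also $\lim_\la c e_\la = c$. Then for $\vphi \in {\rm CCP}(\ca(E), \bC)$ with $\vphi|_E = 0$ the standard identity $\|\vphi\| = \lim_\la \vphi(e_\la)$ forces $\vphi = 0$, so $E$ is separating for $\ca(E)$. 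In case (b), the embedding property is Proposition \ref{P:mndaou}, and the separating property is cleanest: since $(e_\la)_\la \subseteq E$ is a matrix norm-defining approximate order unit for $\ca(E)$, Lemma \ref{L:orderunit} applied with $\ca(E)$ in the role of the selfadjoint operator space shows that any positive $\vphi$ on $\ca(E)$ has $\|\vphi\| = \lim_\la \vphi(e_\la)$; if $\vphi$ kills $E$ then $\vphi(e_\la) = 0$ and hence $\vphi = 0$. In case (c), Theorem \ref{T:1apppos} gives the automatic embedding property and in particular that $E \subseteq \ca(E)$ is an embedding; restricting the defining inclusion of complete approximate $1$-generation to $n = 1$ and rescaling yields $E_{sa} \subseteq \overline{E_+ - E_+}$, so $E$ is approximately positively generated, and then the equivalence [(i) $\Leftrightarrow$ (vi)] of Proposition \ref{P:sepequiv} gives that $E$ is separating for $\ca(E)$.

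Assembling these, in each of (a)--(c) the hypotheses of Proposition \ref{P:embd_plus_hyper}(i) hold, which proves the corollary. Everything is essentially bookkeeping over results already in the paper; the only step requiring a small argument is the separating property in (a), and there the main (mild) obstacle is simply to observe that a contractive approximate unit for a generating C*-subalgebra is an approximate unit for the generated C*-algebra — in (b) the corresponding step is handed to us by Lemma \ref{L:orderunit}, and in (c) by Proposition \ref{P:sepequiv}.
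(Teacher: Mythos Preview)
Your proof is correct and follows essentially the same route as the paper: in each case you verify the two hypotheses of Proposition \ref{P:embd_plus_hyper}(i), invoking \cite[Proposition 2.5]{DKP25}, Proposition \ref{P:mndaou}, Lemma \ref{L:orderunit}, Theorem \ref{T:1apppos}, and Proposition \ref{P:sepequiv} exactly as the paper does. The only difference is that for the separating property in case (a) the paper simply cites \cite[Proposition 2.5]{DKP25} again, whereas you spell out the approximate-unit argument directly; this is harmless, and your argument is fine (indeed that is essentially what the cited result encodes).
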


\begin{proof}
The fact that the inclusion $E\subseteq \ca(E)$ is an embedding in item (i), follows from \cite[Proposition 2.5]{DKP25}.
By the same result we have that $E$ is separating for $\ca(E)$.
The fact that $E$ is separating for $\ca(E)$ in item (ii) follows in the same way by Lemma \ref{L:orderunit}, and the embedding property follows from Corollary \ref{C:mndaou}.
Item (iii) follows from Proposition \ref{P:sepequiv} and Theorem \ref{T:1apppos}.
\end{proof}

We will say that a selfadjoint operator space $E\subseteq \ca(E)$ \emph{contains enough unitaries in} $\ca(E)$ if there is a collection of unitaries in $E$ which generates $\ca(E)$ as a C*-algebra, that is, $\ca(E)$ is the smallest C*-algebra that contains these unitaries. 
The notion of a subspace containing enough unitaries was first considered by Pisier in \cite{Pis96} in the context of operator spaces to provide a simple proof of Kirchberg's Theorem. 
It was then studied by Kavruk--Paulsen--Todorov--Tomforde in the context of nuclearity and tensor products of operator systems \cite{KPTT13}.

\begin{proposition}\label{P:unitaruep}
Let $E\subseteq \ca(E)$ be  a selfadjoint operator space that contains enough unitaries in $ \ca(E)$. If $\Phi \colon \ca(E) \to \B(K)$ is a unital $*$-representation then $\Phi|_E$ has the unique extension property. In particular, $E$ is hyperrigid.
\end{proposition}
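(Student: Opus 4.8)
The plan is to show that \emph{every} completely contractive completely positive extension of $\Phi|_E$ to $\ca(E)$ must coincide with the $*$-representation $\Phi$, using Choi's multiplicative domain theorem. First observe that if $\{u_i\}_{i}\subseteq E$ is a family of unitaries of $\ca(E)$ generating $\ca(E)$ as a C*-algebra, then $\ca(E)$ is unital, with $1_{\ca(E)} = u_i^* u_i$ for each $i$. Now let $\Psi\colon \ca(E)\to\B(K)$ be any completely contractive completely positive map with $\Psi|_E = \Phi|_E$. The first step is to upgrade $\Psi$ to a \emph{unital} map: since $\Psi$ is $2$-positive it satisfies the Kadison--Schwarz inequality, so
\[
\Psi(u_i)^*\Psi(u_i)\le \Psi(u_i^*u_i) = \Psi(1_{\ca(E)}),
\]
while $\Psi(u_i) = \Phi(u_i)$ is a unitary on $K$ and $0\le \Psi(1_{\ca(E)})\le I_K$; these force $\Psi(1_{\ca(E)}) = I_K$. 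This normalisation step — which is needed because $1_{\ca(E)}\notin E$ in general — is the only point that requires a little care.

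With $\Psi$ now unital and completely (in particular $2$-) positive, Choi's multiplicative domain theorem applies. From $\Psi(u_i^*u_i) = I_K = \Psi(u_i)^*\Psi(u_i)$ and $\Psi(u_iu_i^*) = I_K = \Psi(u_i)\Psi(u_i)^*$ we conclude that each $u_i$, and hence each $u_i^*$, lies in the multiplicative domain of $\Psi$. The multiplicative domain is a norm-closed $*$-subalgebra of $\ca(E)$, so it is a C*-subalgebra containing $\{u_i,u_i^*\}_i$; since these unitaries generate $\ca(E)$ as a C*-algebra, the multiplicative domain is all of $\ca(E)$. Thus $\Psi$ is multiplicative, i.e., a unital $*$-representation of $\ca(E)$. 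As $\Psi$ and $\Phi$ are both $*$-representations agreeing on the generators $u_i$, they agree on $\ca(E)$, whence $\Psi = \Phi$. This shows that $\Phi|_E$ has the unique extension property.

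For the final assertion, note that $\ca(E)$ is unital, so every non-degenerate $*$-representation of $\ca(E)$ is unital, and by the argument above its restriction to $E$ has the unique extension property. By the characterisation of hyperrigidity in terms of the unique extension property for all (non-degenerate) $*$-representations (Arveson \cite{Arv11}; see also \cite[Theorem 3.10, Theorem 3.11]{DKP25}), it follows that $E$ is hyperrigid in $\ca(E)$.
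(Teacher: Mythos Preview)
Your argument is correct and follows the same route the paper has in mind: the paper's proof simply cites \cite[Lemma 9.3]{KPTT13}, whose content is precisely the multiplicative-domain argument you give, together with \cite[Theorem 3.10]{DKP25} for the passage to hyperrigidity. Your additional step showing $\Psi(1_{\ca(E)})=I_K$ is the one place where the non-unital setting (with $1_{\ca(E)}\notin E$) requires more than a verbatim copy of the operator-system proof, and you handle it correctly; just note that the Schwarz inequality $\Psi(a)^*\Psi(a)\le\Psi(a^*a)$ you invoke uses not only $2$-positivity but also contractivity of $\Psi$ (which you have, since $\Psi$ is completely contractive).
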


\begin{proof}
It follows using similar arguments as in \cite[Lemma 9.3]{KPTT13} and using \cite[Theorem 3.10]{DKP25}.
\end{proof}

As a consequence of Proposition \ref{P:embd_plus_hyper} and Proposition \ref{P:unitaruep} we obtain the following result for the C*-envelope of a selfadjoint operator space that contains enough unitaries in the C*-algebra it generates.

\begin{corollary}\label{C:un and emb}
Let $ E\subseteq \ca(E)$ be a selfadjoint operator space that contains enough unitaries in $\ca(E)$. 
Then the following are equivalent:
\begin{enumerate}
\item The inclusion $E\subseteq \ca(E)$ is an embedding.
\item $\cenv(E)\cong \ca(E)$ by a canonical $*$-isomorphism.
\end{enumerate}
\end{corollary}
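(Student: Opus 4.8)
The plan is to read off both implications from Proposition~\ref{P:embd_plus_hyper} and Proposition~\ref{P:unitaruep}; the only preliminary observation needed is that a selfadjoint operator space containing enough unitaries generates a \emph{unital} C*-algebra. Indeed, if $u\in E$ is one of the generating unitaries, then $u,u^{*}\in E\subseteq\ca(E)$ and $u^{*}u=uu^{*}$ is a unit of $\ca(E)$, so $\ca(E)$ is unital. Moreover Proposition~\ref{P:unitaruep} gives, for free, that $E$ is hyperrigid in $\ca(E)$, so in both directions we are in the setting of Proposition~\ref{P:embd_plus_hyper}(ii).

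For [(i)~$\Rightarrow$~(ii)] I would simply combine the three facts just noted: $\ca(E)$ is unital, $E$ is hyperrigid in $\ca(E)$, and, by hypothesis~(i), the inclusion $E\subseteq\ca(E)$ is an embedding. Proposition~\ref{P:embd_plus_hyper}(ii) then yields a canonical $*$-isomorphism $\cenv(E)\cong\ca(E)$.

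For [(ii)~$\Rightarrow$~(i)] I would use that the C*-envelope $(\cenv(E),j)$ is by definition a C*-cover, so $j\colon E\to\cenv(E)$ is an embedding. Let $\Phi\colon\ca(E)\to\cenv(E)$ be a canonical $*$-isomorphism, that is, $\Phi(x)=j(x)$ for every $x\in E$. Two facts recalled in Section~\ref{S:prelim} finish it: an injective $*$-homomorphism is an embedding (so $\Phi^{-1}$ is one), and a composition of embeddings is an embedding, since $(\psi\circ\vthe)^{\#}=\psi^{\#}\circ\vthe^{\#}$ is completely isometric whenever both $\psi^{\#}$ and $\vthe^{\#}$ are. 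Hence $\Phi^{-1}\circ j\colon E\to\ca(E)$ is an embedding; but $\Phi^{-1}(j(x))=x$, so this map is precisely the canonical inclusion, which is therefore an embedding.

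Since the corollary is a formal consequence of the two cited propositions, there is no serious obstacle; the one point worth a line of justification is the unitality of $\ca(E)$, which is exactly what lets us invoke case~(ii) of Proposition~\ref{P:embd_plus_hyper} (the case needing only an embedding together with unitality) rather than the separating case~(i).
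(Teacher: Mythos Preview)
Your proof is correct and follows exactly the approach the paper indicates, which merely states the corollary as a consequence of Propositions~\ref{P:embd_plus_hyper} and~\ref{P:unitaruep} without further detail. You have correctly supplied the needed observations, namely that $\ca(E)$ is unital (so case~(ii) of Proposition~\ref{P:embd_plus_hyper} applies) and that the reverse implication is a general fact about C*-covers requiring neither hyperrigidity nor the unitary hypothesis.
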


\begin{remark}
We note that having a selfadjoint operator space that contains enough unitaries in the C*-algebra it generates does not imply that the inclusion is an embedding. 
Indeed, consider the selfadjoint operator space $F \subseteq {\rm C}(\bT)$ of Example \ref{E:nopositive_ext1}. 
Then the inclusion in ${\rm C}(\bT)$ is not an embedding, but $F$ contains enough unitaries in ${\rm C}(\bT)$ as it contains the coordinate function. In particular, by Proposition \ref{P:unitaruep} we have that $F$ is hyperrigid in ${\rm C}(\bT)$ and yet Corollary \ref{C:un and emb} implies that there is no canonical $*$-isomorphism between ${\rm C}(\bT)$ and $\cenv(F)$.
\end{remark}

\begin{example}
Recall the selfadjoint operator space $E_{n} \subseteq \ca(\bF_{n})$ of Example \ref{E:torus}. 
As shown therein, this inclusion is an embedding, and since $E_{n}$ contains enough unitaries in $\ca(\bF_{n})$, by Corollary \ref{C:un and emb} we have a canonical $*$-isomorphism $\cenv(E_{n})\cong \ca(\bF_{n})$.

We note also that the selfadjoint operator space  $E_{n}$ satisfies a universal property analogous to the operator system case \cite[Proposition 9.7]{KPTT13}: for every selfadjoint operator space $F$ and contractions $ y_{1}, \dots,y_n \in F$, there exists a completely contractive completely positive map $ \phi\colon E_n \to F$ mapping $ \phi (u_i) =y_{i}$, for all $ i =1,\dots,n$. 
Indeed, consider an embedding $ F\subseteq \B(H)$ and as in Example \ref{E:torus}, dilate each $ y_i$ to a unitary $\wh{y_i}$ in $\B(H^{(2)})$. 
By the universal property of $\ca(\bF_{n})$, there exists a unital $*$-homomorphism 
\[
\Phi\colon \ca(\bF_{n}) \to \B(H^{(2)}) \text{ such that }
\Phi(u_i) =\wh{y_i} \foral i =1, \dots, n.
\]
Compressing to the $(1,1)$ corner and restricting to $E_{n}$ yields a completely contractive completely positive map $\phi\colon E_n \to F$ such that $ \phi(u_i)=y_i$, $ i =1, \dots, n$.
\end{example}

\subsection*{Acknowledgements}

The authors would like to thank Travis Russell for the helpful comments on a draft of this manuscript, that led to Corollary \ref{C:Wun=Run} and Remark \ref{R:unique}. 
The authors would like to thank the referee for the constructive comments and remarks that helped improve the submitted manuscript.
Alexandros Chatzinikolaou, Evgenios Kakariadis and Apollonas Paras\-kevas acknowledge that this research work was supported within the framework of the National Recovery and Resilience Plan Greece 2.0, funded by the European Union - NextGenerationEU (Implementation Body: HFRI. Project name: Noncommutative Analysis: Operator Systems and Nonlocality. HFRI Project Number: 015825).
Sam Kim acknowledges that this research work was supported by the Research Foundation Flanders (FWO) research project G085020N
and the internal KU Leuven funds project number C14/19/088.
Apollonas Paraskevas acknowledges that this research work was supported by the Hellenic Foundation for Research and Innovation (HFRI) under the 5th Call for HFRI PhD Fellowships (Fellowship Number: 19145).
Research work for this project was undertaken during the ``Operator Systems and Applications" meeting at Banff (February 2025), and the ``Noncommutative Analysis: Operator systems and nonlocality" Summer School at Samos (July 2025).
The authors would like to thank the Banff International Research Station and the University of Aegean for the hospitality.

\begin{open}
For the purpose of open access, the authors have applied a Creative Commons Attribution (CC BY) license to any Author Accepted Manuscript (AAM) version arising.
\end{open}


\end{document}